\newtheorem{theorem}{Theorem}[section]
\newtheorem{lemma}[theorem]{Lemma}
\newtheorem{corollary}[theorem]{Corollary}
\newtheorem{proposition}[theorem]{Proposition}
\newtheorem{example}[theorem]{Example}
\newtheorem{problem}[theorem]{Problem}
\title[Localizing algebras and invariant subspaces]{ Localizing algebras and invariant subspaces}
\author{Miguel Lacruz and Luis Rodr\'{\i}guez-Piazza}
\address{Departamento de An\'{a}lisis Matem\'{a}tico, Facultad de Matem\'{a}ticas, Universidad de Sevilla, Apartado 1160, 41080  Sevilla, Spain}
\email{lacruz@us.es}
\address{Departamento de An\'{a}lisis Matem\'{a}tico, Facultad de Matem\'{a}ticas, Universidad de Sevilla, Apartado 1160, 41080 Sevilla, Spain}
\email{piazza@us.es}
\begin{document}
\renewcommand{\thefootnote}{}
\begin{abstract}
It is shown that the algebra \(L^\infty(\mu)\) of all bounded measurable functions with respect to a finite measure \(\mu\) is localizing on the Hilbert space \(L^2(\mu)\)  if and only if the  measure \(\mu\) has an atom.  Next, it is shown that the algebra \(H^\infty({\mathbb D})\) of all bounded analytic multipliers on the unit disc fails to be localizing, both on the Bergman  space \(A^2({\mathbb D})\) and on  the Hardy space \(H^2({\mathbb D}).\)  Then, several conditions are provided for the algebra generated by a diagonal operator on a Hilbert space to be localizing. Finally, a theorem is provided about the existence  of hyperinvariant subspaces for  operators with  a localizing subspace of extended eigenoperators. This theorem extends and unifies some previously known results of Scott Brown and Kim, Moore and Pearcy, and Lomonosov, Radjavi and Troitsky.
\end{abstract}
\date{10 May 2013}
\subjclass[2010]{Primary 47L10; Secondary 47A15}
%47L10 Algebras of operators on Banach spaces and other topological linear spaces
%47A15 Invariant subspaces
\keywords{Localizing algebra; Extended eigenvalue; Invariant subspace}
\maketitle
\section{Introduction}
\noindent
Let  \(\mathcal{B}(E)\) denote the algebra of all bounded  linear operators defined on a Banach space \(E.\) A subspace \({\mathcal X} \subseteq {\mathcal B}(E)\) is said to be {\em localizing} provided that there is a closed ball \(B \subseteq E\) such that \( 0 \notin B\) and such  that for every sequence \((x_n)\) in \(B\) there is a subsequence \((x_{n_j})\) and a sequence \((X_j)\) in \({\mathcal X}\) such that \(\|X_j\| \leq 1\) and such that the sequence \((X_jx_{n_j})\) converges in norm to some nonzero vector. This notion  was introduced by Lomonosov, Radjavi, and Troitsky \cite{LRT}  as a side condition to build  invariant subspaces for bounded linear operators on Banach spaces.  

Recall that the commutant of an operator \(T \in {\mathcal B}(E)\) is the subalgebra \(\{T\}^\prime \subseteq {\mathcal B}(E)\) of all operators that commute with \(T.\) 
A subspace \(F \subseteq E\) is said to be invariant under an operator \(T \in {\mathcal B}(E)\) provided that \(TF \subseteq F\). A subspace \(F \subseteq E\) is said to be  invariant under a subalgebra \({\mathcal R} \subseteq {\mathcal B}(E)\) if \(F\) is invariant under  every \(R \in {\mathcal R} \). A subspace \(F \subseteq E\) is said to be hyperinvariant under an operator \(T \in {\mathcal B}(E)\) if \(F\) is invariant under  the commutant \(\{T\}'\). A subalgebra \({\mathcal R} \subseteq {\mathcal B}(E)\) is said to be transitive if the only closed subspaces invariant under \({\mathcal R}\) are the trivial ones,  namely, \(F= \{0\}\) and \(F=E.\) It is easy to see that this is equivalent to saying that for every \(x \in E \backslash \{0\},\) the subspace \(\{Rx \colon R \in {\mathcal R}\}\) is dense in \(E.\)

We shall denote by \({\rm ball}({\mathcal X})\) the unit ball of a subspace \(\mathcal{X} \subseteq \mathcal{B}(E).\) Also, we shall denote by \(\sigma\) the weak operator topology on  \( \mathcal{B}(E).\) Recall that for a convex subset of \({\mathcal B}(E),\)  the closure in the weak operator topology agrees with the closure in the strong operator topology.

Lomonosov, Radjavi, and Troitsky \cite{LRT}  proved among other results the following
\begin{theorem} If \(T\) is a nonzero quasinilpotent operator on a Banach space and its commutant \(\{T \}^\prime\) is a localizing algebra, then \(T\)  has a nontrivial hyperinvariant subspace.
\label{quasi}
\end{theorem}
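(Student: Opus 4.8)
The plan is to argue by contradiction, adapting Lomonosov's classical compactness argument so that the localizing hypothesis on \(\{T\}'\) plays the role formerly played by compactness. Suppose that \(T\) has no nontrivial hyperinvariant subspace. Then \(\ker T\) and \(\overline{\operatorname{ran} T}\) are hyperinvariant, so each is \(\{0\}\) or \(E\); since \(T\neq 0\) this forces \(T\) to be injective with dense range, and it forces the commutant \(\mathcal A:=\{T\}'\) to be transitive. I record here the fact I will use at the end: on any nonzero closed \(T\)-invariant subspace \(F\) one has \(\|T^n|_F\|\le\|T^n\|\), so \(T|_F\) is again quasinilpotent, and in particular \(0\in\sigma(T|_F)\), so \(T|_F\) is not invertible.

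Let \(B=\{x:\|x-x_0\|\le r\}\) be a localizing ball with \(0\notin B\); since \(T\) is injective and \(0\notin B\), we have \(Tx\neq 0\) for every \(x\in B\). By transitivity, for each \(x\in B\) the orbit \(\mathcal A(Tx)\) is dense, so there is \(A\in\mathcal A\) with \(ATx\) in the interior of \(B\); thus the sets \(U_A=\{x:ATx\in\operatorname{int}B\}\), \(A\in\mathcal A\), form an open cover of \(B\). In Lomonosov's setting one would now use compactness of \(\overline{T(B)}\) to pass to a finite subcover, build a continuous self-map of \(B\) through a partition of unity valued in convex combinations \(\sum_i\lambda_i(x)\,A_iT\), and invoke the Schauder fixed point theorem to produce \(z\in B\) and \(A=\sum_i\lambda_i(z)A_i\in\mathcal A\) with \(ATz=z\). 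Here \(T\) need not be compact, and this is the crux of the matter: the localizing property must substitute for compactness. The plan is to feed a sequence \((x_n)\) in \(B\) arising from this iteration into the localizing hypothesis, extracting a subsequence \((x_{n_j})\) together with operators \(X_j\in\operatorname{ball}(\mathcal A)\) for which \((X_jx_{n_j})\) converges to a nonzero vector; since the closure of a convex subset of \(\mathcal A\) in the weak operator topology agrees with its strong closure, the relevant limiting operator can be kept inside \(\mathcal A\). The outcome I aim for is the same eigenrelation: a vector \(z\neq 0\) and an operator \(A\in\mathcal A\) with \(ATz=z\). Turning the convergent subsequence supplied by localizing into a genuine fixed point, rather than a mere limit, is the main obstacle.

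Granting the eigenrelation \(ATz=z\) with \(z\neq0\) and \(A\in\{T\}'\), the proof closes cleanly, and notably without any appeal to finite dimensionality. Put \(E_0=\ker(AT-I)\), a nonzero closed subspace. Since \(A\) commutes with \(T\), one checks that \(T\) commutes with \(AT\), whence \(E_0\) is invariant under \(T\). On \(E_0\) the relation \(ATz=z\) shows that \(T|_{E_0}\) is injective; moreover, using \(AT=TA\), for \(w\in E_0\) the vector \(Aw\) again lies in \(E_0\) and satisfies \(T(Aw)=w\), so \(T|_{E_0}\) is also surjective. By the open mapping theorem \(T|_{E_0}\) is invertible on the Banach space \(E_0\). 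This contradicts the observation from the first paragraph that \(T|_{E_0}\) is quasinilpotent, hence not invertible. Therefore \(\mathcal A\) cannot be transitive, and \(T\) has a nontrivial hyperinvariant subspace.
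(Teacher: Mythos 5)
Your proposal contains a genuine gap, and you identify it yourself: you never establish the eigenrelation \(ATz=z\), you only announce it as the goal. Everything after ``granting the eigenrelation'' is correct (the kernel \(E_0=\ker(AT-I)\) is closed, nonzero, \(T\)-invariant, and \(T|_{E_0}\) is bijective, hence invertible by the open mapping theorem, contradicting quasinilpotency of the restriction), but it is conditional on a step that the localizing hypothesis cannot deliver along the route you sketch. Lomonosov's machine uses compactness twice: once to extract a finite subcover of the compact set \(\overline{T(B)}\) from the cover by the sets \(U_A\), and once to apply Schauder to a self-map whose range lies in a compact convex set. The localizing property replaces neither ingredient: it is a purely sequential statement, giving for each sequence in \(B\) a subsequence \((x_{n_j})\) and contractions \(X_j\) with \(X_jx_{n_j}\) norm-convergent to some nonzero vector. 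There is no continuity in \(x\), no single operator serving all points of \(B\) simultaneously, and no compact convex set to map into itself; a norm-convergent subsequence is not a fixed point, and passing to a WOT/SOT limit of the \(X_j\) does not help because the limiting operator need not satisfy any equation at the limiting vector.

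The paper's own argument (it proves the more general Theorem \ref{main}, of which this statement is the case \(|\lambda|=1\) with \(\lambda=1\)) shows how to bypass fixed points entirely, via a Hilden-style iteration, and this is exactly the idea your proposal is missing. One first proves a quantitative claim: there is a constant \(c>0\) such that every \(x\in B\) admits \(X\in\{T\}'\) with \(\|X\|\le c\) and \(TXx\in B\). This is where localizing and transitivity interact: if no such \(c\) existed, choose witnesses \(x_n\in B\) such that any \(X\in\{T\}'\) with \(TXx_n\in B\) has \(\|X\|>n\); localizing yields \(\|X_j\|\le 1\) with \(X_jx_{n_j}\to x\neq 0\); injectivity of \(T\) gives \(Tx\neq 0\); transitivity gives \(R\in\{T\}'\) with \(RTx\in\operatorname{int}B\), so for large \(j\) one has \(T(RX_j)x_{n_j}=RTX_jx_{n_j}\in B\), forcing \(\|RX_j\|>n_j\), which contradicts \(\|RX_j\|\le\|R\|\). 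With the claim in hand one iterates: \(x_n=TX_n\cdots TX_1x_0=X_n\cdots X_1T^nx_0\) stays in \(B\), so
\[
0<d\le\|x_n\|\le c^n\,\|T^n\|\,\|x_0\|,
\]
whence \(r(T)=\lim_n\|T^n\|^{1/n}\ge 1/c>0\), contradicting quasinilpotency. Note that this is also where the quasinilpotency hypothesis enters the proof in an essential, quantitative way, whereas in your sketch it only appears at the very end through the restriction \(T|_{E_0}\). If you want to salvage your write-up, replace the Schauder step by this uniform-constant claim and iteration; the localizing property is built for sequential arguments paired with the spectral radius formula, not for topological fixed-point theorems.
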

\noindent
They also made the observation that any operator algebra containing a nonzero compact operator is a localizing algebra. This observation can be refined as follows.

\begin{proposition}
\label{compact}
Let \(\mathcal{X}\) be a subspace  of \(\mathcal{B}(E)\) such  that the closure  of its unit ball  in the weak operator topology contains a nonzero compact operator. Then  \(\mathcal{X}\) is localizing.
\end{proposition}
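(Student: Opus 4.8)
The plan is to prove the proposition directly from the definition of a localizing algebra. Let $K$ be a nonzero compact operator lying in the weak-operator closure of $\operatorname{ball}(\mathcal{X})$. Since $K \neq 0$, there is some vector $y$ with $Ky \neq 0$; by rescaling and translating I would choose a closed ball $B$ centered at such a $y$ with small enough radius that $K$ does not vanish on $B$ and $0 \notin B$. The candidate ball witnessing the localizing property will be this $B$.

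First I would fix a sequence $(x_n)$ in $B$. Because $B$ is bounded and $K$ is compact, the sequence $(Kx_n)$ has a norm-convergent subsequence, say $Kx_{n_j} \to z$. The key point is to arrange that $z \neq 0$: this is where the geometry of $B$ matters. Since every $x_n$ lies in a ball around $y$ of radius $r$, I would estimate $\|Kx_n - Ky\| \leq \|K\| \, r$, and by choosing $r < \|Ky\|/(2\|K\|)$ at the outset, I can guarantee $\|Kx_n\| \geq \|Ky\|/2 > 0$ for all $n$; hence the limit $z$ of the convergent subsequence satisfies $\|z\| \geq \|Ky\|/2 > 0$, so $z$ is nonzero.

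Next I must replace $K$ by operators $X_j \in \mathcal{X}$ with $\|X_j\| \le 1$, since the definition of localizing requires the approximating operators to come from $\mathcal{X}$ itself, not merely from its weak-operator closure. Here I would exploit that $K$ lies in the weak-operator closure of the \emph{convex} set $\operatorname{ball}(\mathcal{X})$; by the remark recorded in the excerpt, the weak and strong operator closures of a convex set coincide, so $K$ is in the strong-operator closure of $\operatorname{ball}(\mathcal{X})$. This lets me approximate $K$ strongly on the finitely or countably many relevant vectors. Concretely, for each $j$ I would pick $X_j \in \operatorname{ball}(\mathcal{X})$ with $\|X_j x_{n_j} - K x_{n_j}\|$ smaller than $1/j$, which is possible because strong convergence gives approximation on the single prescribed vector $x_{n_j}$ (one may need to pass to a further subsequence, but relabeling is harmless).

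Finally I would combine the two approximations: $\|X_j x_{n_j} - z\| \le \|X_j x_{n_j} - K x_{n_j}\| + \|K x_{n_j} - z\| \to 0$, so $(X_j x_{n_j})$ converges in norm to the nonzero vector $z$, as required. The main obstacle I anticipate is the passage from the weak-operator closure to usable norm estimates on the selected vectors; the convexity remark is exactly the tool that overcomes it, converting weak-operator membership into pointwise strong approximation, after which compactness of $K$ and the careful choice of the radius $r$ do the rest.
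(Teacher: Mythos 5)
Your proposal is correct and follows essentially the same argument as the paper: compactness of $K$ yields a norm-convergent subsequence $(Kx_{n_j})$, the coincidence of weak and strong operator closures for the convex set $\operatorname{ball}(\mathcal{X})$ supplies the operators $X_j$ approximating $K$ on each $x_{n_j}$, and the ball centered at a point where $\|K\cdot\|$ is bounded below (with radius small relative to $\|Ky\|/\|K\|$) forces the limit to be nonzero. The paper merely normalizes $\|K\|=1$ and uses the explicit constants $3/4$, $1/4$, $1/2$ where you carry the radius $r$ symbolically; no further subsequence extraction is actually needed in your step obtaining the $X_j$, as you suspected.
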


\begin{proof}
Let \(T \in \overline{\rm{ball}(\mathcal{X})}^\sigma\)be a nonzero compact operator. We may assume without loss of generality that \(\|T\|=1\). Let \(x_0 \in E\) be a vector with the property  that \(\|x_0\|=1\) and \(\|Tx_0\| \geq 	3/4\). Consider the closed ball \(B=\{x \in E: \|x-x_0\| \leq 1/4\}\). It is clear that \( 0 \notin B\). Now, let \((x_n)\) be any sequence in \(B\). Since \(T\) is compact,  there is a subsequence \((x_{n_j})\) such that \((Tx_{n_j})\) converges in norm to \(y \in E\), say. Since the closure in the weak operator topology of a convex set agrees with the closure in the strong operator topology, for every \(j \geq 1\), there is an operator \(X_j \in \rm{ball}(\mathcal{X})\) such that \(\|Tx_{n_j}-X_jx_{n_j}\| \leq 1/j\). It follows that \(\|y - X_jx_{n_j}\| \rightarrow 0\) as \(j \rightarrow \infty\). Finally, we show that \(y \neq 0\). Notice that  \(\|Tx_n - Tx_0\| \leq \|x_n -x_0\| \leq 1/4\), and since \(\|Tx_0\| \geq 3/4\), we conclude that \(\|Tx_n\| \geq 1/2\) for all \(n \geq 1,\)  so that \(\|y\| \geq 1/2\), as we wanted.
\end{proof}

The first author \cite{lacruz} provided an example of a weakly closed, localizing algebra of bounded operators on the Banach space \(C[0,1]\) that does not contain any  nonzero compact operators. As a matter of fact, the example is the algebra of all multiplication operators by continuous functions on the unit interval, and as it turns out, this is the uniformly closed unital algebra generated by the position operator. It is natural to ask  if such a construction can be carried out on a Hilbert space. This question can be formulated more precisely  as follows.

\begin{problem}
\label{genprob}
{\em Is there a Hilbert space \(H\) and a localizing algebra \({\mathcal R} \subseteq {\mathcal B}(H)\) such that  \(\overline{{\rm ball}({\mathcal R})}^\sigma\) does not contain any nonzero compact operators?}
\end{problem}

The first part of this paper (sections 2, 3, and 4) initiates the investigation  of some properties of localizing algebras bearing this question in mind, although we have not been able to solve it.

The  notions of extended eigenvalue and  eigenoperator  became popular back in the 1970s when searching for invariant subspaces of operators on Banach spaces.  A  complex scalar \(\lambda \in \mathbb{C}\) is said to be an {\em extended eigenvalue} for an operator \(T \in {\mathcal B}(E)\)  if there exists a  nonzero operator  \(X \in {\mathcal B}(E)\) such that \(TX=\lambda XT.\) Such an operator \(X\) is called an {\em extended eigenoperator} for \(T\) associated with \(\lambda.\) The following extension  of Lomonosov's  invariant subspace theorem  \cite{lomonosov} was obtained by Scott Brown \cite{brown}, and independently by  Kim, Moore and Pearcy \cite{KMP}.

\begin{theorem} 
\label{BKMP}
If a nonscalar operator \(T \in {\mathcal B}(E)\) has a compact eigenoperator then \(T\) has a nontrivial hyperinvariant subspace.
\end{theorem}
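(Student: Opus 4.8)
The plan is to prove Theorem~\ref{BKMP} by combining the classical Lomonosov technique with Proposition~\ref{compact}. Let me think about how the extended eigenvalue equation $TX = \lambda XT$ interacts with hyperinvariance.

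First I would handle the structure of the situation. Suppose $T$ is nonscalar and $X \neq 0$ is a compact operator with $TX = \lambda XT$ for some $\lambda \in \mathbb{C}$. I would split into cases according to the value of $\lambda$. The case $\lambda = 0$ gives $TX = 0$, so $\ker T \neq \{0\}$; if $\ker T \neq E$ then $\ker T$ is a nontrivial closed subspace invariant under every $S \in \{T\}'$ (since $S\ker T \subseteq \ker T$), hence hyperinvariant, and we are done. The case $\lambda = 1$ is precisely the situation where $X$ commutes with $T$, i.e. $X \in \{T\}'$ is a nonzero compact operator, and this is the classical Lomonosov theorem (Theorem~\ref{lomonosov} setting), so that case is already known. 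The substantive content is therefore $\lambda \neq 0, 1$.

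Next I would exploit the eigenoperator relation to build an invariant structure. The key algebraic observation is that the relation $TX = \lambda XT$ forces a commutant-like behavior: if $S \in \{T\}'$ then one checks that $X^k$ satisfies $T X^k = \lambda^k X^k T$, and more importantly the closed subspace $\overline{\operatorname{ran} X}$ or $\ker X$ may be hyperinvariant in degenerate cases. The clean approach, and the one I expect the authors to take given the emphasis on localizing algebras, is to observe that the commutant $\{T\}'$ is a localizing algebra whenever $T$ admits a compact extended eigenoperator: indeed, one wants to show that $\overline{\operatorname{ball}(\{T\}')}^\sigma$ contains a nonzero compact operator so that Proposition~\ref{compact} applies and then invoke the hyperinvariant-subspace machinery of Theorem~\ref{quasi}-type results. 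Concretely, the strategy is to produce, from the single compact eigenoperator $X$, a nonzero compact operator lying in the weak-operator closure of the unit ball of the commutant.

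The hard part will be passing from the extended eigenoperator $X$ (which commutes with $T$ only up to the scalar $\lambda$) to an honest element of the weak-operator closure of $\operatorname{ball}(\{T\}')$. My plan here is to consider the sequence of normalized operators built from powers or from spectral-type averages of $X$ under the $\lambda$-twisted action, and to use compactness of $X$ to guarantee that a suitable subsequence converges, in an appropriate topology, to a nonzero compact limit that genuinely commutes with $T$. If $|\lambda| \neq 1$ one can often force iterates $X^n$ or rescalings $\lambda^{-n} X^n$ to behave controllably; if $|\lambda| = 1$ a more delicate averaging argument is needed. Once a nonzero compact operator is located in $\overline{\operatorname{ball}(\{T\}')}^\sigma$, Proposition~\ref{compact} shows $\{T\}'$ is localizing, and then a transitivity argument in the spirit of Lomonosov—assuming for contradiction that $\{T\}'$ is transitive and deriving a contradiction from the localizing property together with the nonscalar hypothesis on $T$—yields the nontrivial hyperinvariant subspace. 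The main obstacle I anticipate is precisely controlling the limit so that it is simultaneously nonzero, compact, and exactly commuting with $T$ rather than merely $\lambda$-twisted-commuting, and isolating whichever normalization of $X$ achieves this across all the relevant ranges of $\lambda$.
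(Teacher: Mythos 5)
The decisive step of your plan is missing, and it is the entire difficulty of the theorem. For $\lambda \neq 0,1$ you propose to place a nonzero compact operator in $\overline{{\rm ball}(\{T\}')}^\sigma$ and then invoke Proposition~\ref{compact}. But you give no construction: powers satisfy $TX^n=\lambda^n X^nT$, so they are again $\lambda^n$-twisted rather than commuting (and even when $\lambda$ is a root of unity, $X^n$ may vanish), and no ``averaging'' of twisted operators is exhibited or analyzed; you yourself flag the simultaneous nonvanishing/compactness/exact-commutation of the limit as an unresolved obstacle. Worse, the plan cannot be salvaged in the form you state it: the commutant $\{T\}'$ is closed in the weak operator topology, so any nonzero compact operator in $\overline{{\rm ball}(\{T\}')}^\sigma$ would already lie in $\{T\}'$ itself. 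Your strategy is therefore exactly the assertion that every operator with a compact eigenoperator commutes with a nonzero compact operator, i.e.\ a reduction of Theorem~\ref{BKMP} to Lomonosov's theorem \cite{lomonosov}. No such reduction is available --- that is why Theorem~\ref{BKMP} exists as a separate theorem, and the point of this paper's Theorem~\ref{example} is precisely that localizing spaces of eigenoperators need not contain any nonzero compact operator. There is a second flaw: even granting a localizing commutant, you appeal to ``Theorem~\ref{quasi}-type'' machinery, but Theorem~\ref{quasi} requires $T$ to be quasinilpotent, which is not a hypothesis of Theorem~\ref{BKMP}; a localizing commutant alone is not known to yield hyperinvariant subspaces.

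The paper's route avoids the commutant's compactness altogether. The localizing space is taken to be $\mathcal{X}$, the set of \emph{all} extended eigenoperators associated with $\lambda$: it contains the nonzero compact $X$, hence is localizing by the Lomonosov--Radjavi--Troitsky observation refined in Proposition~\ref{compact}. Theorem~\ref{main} is then proved by a Hilden-style ping-pong that works directly with the twisted relation: assuming $\{T\}'$ transitive (so $T$ is injective, $\ker T$ being hyperinvariant), one finds a constant $c>0$ and vectors $x_n=TX_n\cdots TX_1x_0\in B$ with $\|X_j\|\leq c$, and the identities $x_n=\lambda^{n(n+1)/2}X_n\cdots X_1T^nx_0$ and $x_n=\lambda^{-n(n-1)/2}T^nX_n\cdots X_1x_0$ convert $0\notin B$ into the spectral-radius contradiction $r(T)=\infty$ when $|\lambda|<1$ or $|\lambda|>1$. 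This recovers Theorem~\ref{BKMP} for all $\lambda$ with $|\lambda|\neq 1$, which the paper explicitly notes is the extent of the overlap; the case $|\lambda|=1$, $\lambda\neq 1$ without quasinilpotence is not reproved there, and your sketch does not repair it either. Of your case analysis, only $\lambda=0$ (where $\ker T$ is a nontrivial hyperinvariant subspace) and $\lambda=1$ (classical Lomonosov) are sound.
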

The special case that \(T\) commutes with a nonzero compact operator is the original result of Lomonosov.

Recently, the concepts of extended eigenvalue and  eigenoperator have received a considerable amount of attention, both in the context of invariant subspaces \cite{lambert} and in the study of extended eigenvalues and  extended eigenoperators for some special classes of operators \cite{BP,BLP, BS, karaev,petrovic,shkarin}.
The second part of this paper (section 5) provides a result that extends and unifies   Theorem \ref{quasi} and Theorem \ref{BKMP}. Our result can be stated as follows.

\begin{theorem} 
\label{main}
Let \(T \in {\mathcal B}(E)\) be a nonzero operator, let \(\lambda \in {\mathbb C}\)  be an extended eigenvalue of \(T\)  such that the subspace \({\mathcal X}\) of all associated extended eigenoperators is localizing and suppose that either
\begin{enumerate}
\item \(|\lambda | < 1,\)
\item \(|\lambda | > 1,\) or
\item \(|\lambda |=1\) and \(T\) is quasinilpotent.
\end{enumerate}
Then  \(T\) has a nontrivial hyperinvariant subspace.
\end{theorem}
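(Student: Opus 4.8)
The plan is to argue by contradiction. Suppose $T$ has no nontrivial hyperinvariant subspace; equivalently, the commutant $\mathcal{A}=\{T\}'$ is transitive, so that $\{Sx:S\in\mathcal A\}$ is dense in $E$ for every $x\neq0$. Write $\mathcal{X}=\{X\in\mathcal B(E):TX=\lambda XT\}$ for the norm closed space of extended eigenoperators associated with $\lambda$. Two algebraic facts drive everything. First, $\mathcal X$ is a bimodule over $\mathcal A$: if $A\in\mathcal A$ and $X\in\mathcal X$ then both $AX$ and $XA$ again lie in $\mathcal X$. Second, the commutation relation iterates to $T^nX=\lambda^nXT^n$ for every $n$, so that for $W=AX\in\mathcal X$ one has $(WT)^n=\lambda^{\binom n2}W^nT^n$ and $(TW)^n=\lambda^{-\binom n2}T^nW^n$.

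The heart of the proof is a Lomonosov--Schauder fixed point argument, carried out with the localizing ball $B$ of $\mathcal X$ (closed, $0\notin B$, centre $x_0$, radius $\rho$) in place of a compact operator. Using transitivity, the open sets $\{y:\|Ay-x_0\|<\rho\}$, $A\in\mathcal A$, cover $E\setminus\{0\}$; using the localizing property of $\mathcal X$ to supply the missing compactness, I would produce $A\in\mathcal A$, $X\in{\rm ball}(\mathcal X)$ and a nonzero $z\in B$ satisfying a fixed point relation. For $|\lambda|<1$ I aim for $Wz=z$ with $W=AX$; for $|\lambda|\ge1$ I aim for $TWz=z$, obtained by applying $T$ after the operator from ${\rm ball}(\mathcal X)$, so that $T$ acts on the relatively compact image while transitivity returns the result to $B$. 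Here lies the main obstacle: in Lomonosov's setting one covers the compact set $\overline{K(B)}$ and invokes Schauder's theorem in a single step, whereas under mere localizing the set ${\rm ball}(\mathcal X)\,B$ need not be relatively compact, so $z$ and $W$ must instead be extracted by an iterative scheme (nested subsequences and a diagonal passage to the limit), with the added burden of checking that the limiting operator is bounded and still lies in $\mathcal X$.

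Granting the fixed point relation, each case closes quickly. If $|\lambda|<1$, then $Wz=z$ gives $W(T^nz)=\lambda^{-n}T^nz$ for all $n$; since $|\lambda^{-n}|\to\infty$ exceeds $\|W\|$, the vector $T^nz$ must vanish for large $n$, whence $\ker T\neq\{0\}$. As $\ker T$ is a nontrivial hyperinvariant subspace for $T$, this contradicts transitivity. If $|\lambda|>1$, or if $|\lambda|=1$ and $T$ is quasinilpotent, I use $C=TW$ and the identity $(TW)^n=\lambda^{-\binom n2}T^nW^n$, which yields $\|(TW)^n\|^{1/n}\le|\lambda|^{-(n-1)/2}\,\|W\|\,\|T^n\|^{1/n}\to0$, the factor $|\lambda|^{-(n-1)/2}$ handling the case $|\lambda|>1$ and the factor $\|T^n\|^{1/n}\to0$ handling the quasinilpotent case. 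Hence $TW$ is quasinilpotent, $I-TW$ is invertible, and the relation $z=(TW)z$ forces $z=0$, contradicting $z\in B$ and $0\notin B$.

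In summary, the algebra and the three spectral cases are routine once the fixed point relation is in hand; the genuine difficulty is the adaptation of Lomonosov's compactness argument to a localizing algebra, namely manufacturing the operator $W\in\mathcal X$ and the nonzero fixed vector $z$ from the sequential localizing condition rather than from an honest compact operator. I expect this construction, together with the bookkeeping that places $T$ on the correct side so that the relevant product ($WT$ for $|\lambda|<1$, $TW$ for $|\lambda|>1$) becomes quasinilpotent, to be where essentially all of the work resides.
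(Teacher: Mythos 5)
Your proposal has the right frame (contradiction via transitivity of \(\{T\}'\), the bimodule property \(A\mathcal{X}\subseteq\mathcal{X}\), the iterated commutation identities, and a spectral-radius/quasinilpotence endgame), and the per-case closing arguments would indeed be correct \emph{if} the fixed-point relation were available: \(Wz=z\) forces \(\ker T\neq\{0\}\) when \(|\lambda|<1\), and \(z=TWz\) with \(TW\) quasinilpotent forces \(z=0\) in the other two cases. But the central step --- manufacturing \(W\in\mathcal{X}\) (or \(TW\)) and a nonzero \(z\) with \(Wz=z\) or \(TWz=z\) --- is never carried out; you only announce it as an aim and concede that ``essentially all of the work resides'' there. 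This is a genuine gap, and it is not merely bookkeeping: the localizing hypothesis yields only subsequential \emph{norm convergence of vectors} \(X_jx_{n_j}\), whereas the Schauder route needs relative compactness of \(\mathrm{ball}(\mathcal{X})B\) (absent here, as you note), and your fallback of ``nested subsequences and a diagonal passage to the limit'' would have to produce an \emph{operator} limit of products \(A_jX_j\) that is bounded, still lies in \(\mathcal{X}\), and fixes a nonzero vector --- none of which follows from the localizing condition in any operator topology. Precisely because the fixed-point machinery breaks down for localizing algebras, the paper does not attempt it.

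What the paper does instead is a Hilden-style ping-pong that never produces a fixed point. First one shows (this is the step your proposal is missing, and it is short) that there is a uniform constant \(c>0\) such that for every \(x\in B\) there exists \(X\in\mathcal{X}\) with \(\|X\|\leq c\) and \(TXx\in B\): if not, one gets \(x_n\in B\) witnessing the failure at level \(n\), applies the localizing property to get \(X_jx_{n_j}\to x\neq0\), uses injectivity of \(T\) (since \(\ker T\) is hyperinvariant) to get \(Tx\neq 0\), transitivity to get \(R\in\{T\}'\) with \(RTx\in\mathrm{int}\,B\), and then the relations \(TRX_j=RTX_j\) and \(RX_j\in\mathcal{X}\) contradict \(\|RX_j\|\leq\|R\|\). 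With \(c\) in hand one iterates: \(x_n=TX_n\cdots TX_1x_0\in B\) with \(\|X_k\|\leq c\), and the commutation relation gives
\[
x_n=\lambda^{n(n+1)/2}X_n\cdots X_1T^nx_0
\qquad\text{or}\qquad
x_n=\lambda^{-n(n-1)/2}T^nX_n\cdots X_1x_0,
\]
whence \(0<d\leq\|x_n\|\leq c^n|\lambda|^{\pm n(n\pm1)/2}\|T^n\|\,\|x_0\|\) with \(d=\min\{\|x\|:x\in B\}\). Taking \(n\)-th roots contradicts finiteness of \(r(T)\) when \(|\lambda|<1\) or \(|\lambda|>1\), and contradicts \(r(T)=0\) when \(|\lambda|=1\) and \(T\) is quasinilpotent. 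If you want to salvage your write-up, replace the Schauder step by this uniform-constant-plus-iteration argument; your algebraic identities and case analysis can then be reused essentially verbatim.
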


Since the commutant  \(\{T\}^\prime\) is the family of all extended eigenoperators associated with the extended eigenvalue \(\lambda=1,\) it  follows that Theorem \ref{main} is  an extension of Theorem \ref{quasi} to the case of extended eigenvalues with \(|\lambda | = 1.\)  On the other hand, it follows from Proposition \ref{compact} that Theorem \ref{main} is an extension of Theorem \ref{BKMP} at least for  extended eigenvalues with \(|\lambda | \neq 1.\) 

The paper is organized as follows. In section \ref{asla}, it is shown that if the algebra \(L^\infty(\mu)\) of all bounded measurable functions with respect to a finite measure \(\mu\) is localizing on the Hilbert space \(L^2(\mu)\)  then the  measure \(\mu\) must have an atom.  In section \ref{bam}, it is shown  that the algebra \(H^\infty(\mathbb{D})\) of  all bounded analytic multipliers on the unit disc fails to be localizing, both on the Bergman space \(A^2(\mathbb{D})\) \nopagebreak and on the Hardy space \(H^2(\mathbb{D}).\) In section \ref{diag}, some conditions are given for the algebra generated by a diagonal operator on a Hilbert space to be localizing. In section \ref{eigen},  a proof of Theorem \ref{main} is provided, and an example is given to illustrate  that Theorem \ref{main} is more general  than Theorem \ref{BKMP}.

\section{Abelian selfadjoint localizing algebras} 
\label{asla}
\noindent
The question arises of whether  the  closure in the weak operator topology of the unit ball of a localizing algebra  of operators on a Hilbert space  must contain a nonzero compact operator.
First,  we consider the case of an abelian selfadjoint algebra. Once again, recall that the closure in the weak operator topology of a convex set agrees with the closure in the strong operator topology. Kaplansky's density theorem \cite{kaplansky}  is the assertion  that if \(\mathcal{R}\) is a selfadjoint  algebra of operators on a Hilbert space  then the strong closure of the unit ball of \(\mathcal{R}\) is the unit ball of the strong closure of \(\mathcal{R}.\) See the book of Takesaki  \cite[Theorem 4.8]{takesaki} for another reference on Kaplansky's density theorem. This result is the key to the following
\begin{proposition}
\label{kaplansky}
If \(\mathcal{R}\) is a selfadjoint algebra of operators on a Hilbert space \(H\) such that its closure in the weak operator topology   is a localizing algebra then \(\mathcal{R}\) itself is a localizing algebra.
\end{proposition}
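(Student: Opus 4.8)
The plan is to verify the localizing property for \(\mathcal{R}\) using the very same ball that witnesses it for \(\overline{\mathcal{R}}^\sigma\), by approximating, pointwise and in norm, the operators supplied by the closure with operators drawn from \({\rm ball}(\mathcal{R})\). Write \(\mathcal{M}=\overline{\mathcal{R}}^\sigma\). Since \(\mathcal{R}\) is an algebra it is convex, and hence its closure in the weak operator topology coincides with its closure in the strong operator topology; thus \(\mathcal{M}\) is also the strong closure of \(\mathcal{R}\). Kaplansky's density theorem, as recalled above, then identifies the unit ball of \(\mathcal{M}\) with the strong closure of \({\rm ball}(\mathcal{R})\). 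This is the one place where selfadjointness is used, and it is the crux of the argument: the localizing property only ever produces operators from a unit ball, so one must be able to pass from \({\rm ball}(\mathcal{M})\) back into the strong closure of \({\rm ball}(\mathcal{R})\).

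Next I would fix a closed ball \(B\) with \(0\notin B\) witnessing that \(\mathcal{M}\) is localizing, and claim that the same \(B\) works for \(\mathcal{R}\). Given a sequence \((x_n)\) in \(B\), the localizing property of \(\mathcal{M}\) furnishes a subsequence \((x_{n_j})\) together with operators \(T_j\in{\rm ball}(\mathcal{M})\) such that \((T_jx_{n_j})\) converges in norm to some nonzero vector \(y\). For each \(j\), since \(T_j\) lies in the strong closure of \({\rm ball}(\mathcal{R})\), the definition of the strong operator topology produces \(R_j\in{\rm ball}(\mathcal{R})\) with \(\|R_jx_{n_j}-T_jx_{n_j}\|<1/j\). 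The triangle inequality then gives \(\|R_jx_{n_j}-y\|\le\|R_jx_{n_j}-T_jx_{n_j}\|+\|T_jx_{n_j}-y\|\to 0\), so \((R_jx_{n_j})\) converges in norm to the nonzero vector \(y\) with \(\|R_j\|\le 1\). This is exactly the localizing condition for \(\mathcal{R}\).

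I do not expect a serious obstacle here: all the analytic content is absorbed into Kaplansky's density theorem, and the remaining work is a single pointwise approximation followed by a triangle inequality. In particular, no further diagonal extraction is needed beyond the subsequence handed down by \(\mathcal{M}\), because the approximating operator \(R_j\) is chosen separately for each index \(j\) against the single vector \(x_{n_j}\). The only subtlety worth flagging is that weak approximation would not suffice — the localizing property demands convergence in norm — which is precisely why the identification of \({\rm ball}(\mathcal{M})\) with the strong (rather than merely weak) closure of \({\rm ball}(\mathcal{R})\) is essential.
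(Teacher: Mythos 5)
Your proof is correct and follows essentially the same route as the paper's: use the ball witnessing the localizing property of \(\overline{\mathcal{R}}^\sigma\), invoke Kaplansky's density theorem to replace each \(T_j\in{\rm ball}(\overline{\mathcal{R}}^\sigma)\) by some \(R_j\in{\rm ball}(\mathcal{R})\) with \(\|T_jx_{n_j}-R_jx_{n_j}\|<1/j\), and finish with the triangle inequality. Your explicit remarks on convexity (weak closure equals strong closure) and on why norm-level approximation via the strong topology is the crux are accurate elaborations of points the paper leaves implicit.
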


\begin{proof}
Suppose that the closure in the weak operator topology \(\overline{\mathcal{R}}^\sigma\) is a localizing algebra and let \(B\) be a closed ball as in the definition.  Take a sequence of vectors \((x_n)\) in \(B\), extract a subsequence \((x_{n_j})\), and find a sequence of operators \((T_j)\) in  \(\rm{ball}(\overline{\mathcal{R}}^\sigma)\) and a nonzero vector \(y \in H\) such that \(\|y-T_jx_{n_j} \| \rightarrow 0\) as \(j \rightarrow \infty\).  It follows from Kaplansky's  density theorem that for every \(j \geq 1\) there is an operator \(R_j \in  \rm{ball}(\mathcal{R})\) such that \(\|T_jx_{n_j}-R_jx_{n_j}\| \leq 1/j\). Thus, \(\|y-R_jx_{n_j} \| \rightarrow 0\) as \(j \rightarrow \infty,\) as we wanted.
\end{proof}

\noindent
Let  \((\Omega,  \Sigma, \mu)\) be a finite measure space. We identify every  function \(\varphi\) in \(L^\infty(\mu)\) with the multiplication operator
\(M_\varphi\)  defined on \(L^2(\mu)\) by the expression \((M_\varphi f)(\omega)=\varphi(\omega) f(\omega) \), and  we regard \(L^\infty(\mu)\)  as a subalgebra of \({\mathcal B}(L^2(\mu))\) under this
identification. 

\begin{theorem}
\label{atomic}
If the measure \(\mu\) contains no atoms then the  algebra \(L^\infty(\mu)\) is not localizing on  the Hilbert space \(L^2(\mu)\).
\end{theorem}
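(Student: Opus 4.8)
The plan is to prove the negation of the localizing property directly. I must show that for \emph{every} closed ball $B = \{f \in L^2(\mu) : \|f - f_0\| \le r\}$ with $0 \notin B$, i.e.\ $\|f_0\| > r$, there is a sequence $(x_n)$ in $B$ such that no subsequence $(x_{n_j})$ and no sequence of multipliers $\varphi_j$ with $\|\varphi_j\|_\infty \le 1$ make $(M_{\varphi_j} x_{n_j})$ converge in norm to a nonzero vector. The decisive elementary observation is that a multiplier of norm at most one can only shrink moduli pointwise: if $\|\varphi\|_\infty \le 1$ then $|M_\varphi f| = |\varphi|\,|f| \le |f|$ almost everywhere. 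Consequently, if $M_{\varphi_j} x_{n_j} \to y$ in $L^2(\mu)$, then passing to a further subsequence that converges almost everywhere gives $|y(\omega)| = \lim_k |M_{\varphi_{j_k}} x_{n_{j_k}}(\omega)| \le \liminf_k |x_{n_{j_k}}(\omega)|$ for almost every $\omega$. Thus it suffices to construct $(x_n)$ in $B$ with the property that $\liminf_j |x_{n_j}| = 0$ almost everywhere for \emph{every} subsequence $(n_j)$; for any such sequence the limit $y$ above is forced to vanish.

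The construction exploits non-atomicity through independence. Let $\nu = |f_0|^2 \mu$, a finite non-atomic measure supported on $\{f_0 \ne 0\}$ (it is non-atomic because $\mu$ is), and normalize it to a probability measure $\mathbb{P} = \nu / \|f_0\|^2$. Fix $\theta \in (0,1)$ with $\theta \|f_0\|^2 \le r^2$, possible since $r < \|f_0\|$. I will produce measurable sets $(C_n)$ that are mutually independent for $\mathbb{P}$ with $\mathbb{P}(\Omega \setminus C_n) = \theta$, and set $x_n = f_0\,\chi_{C_n}$. Then $x_n \in B$, because $\|x_n - f_0\|^2 = \int_{\Omega \setminus C_n} |f_0|^2\,d\mu = \nu(\Omega \setminus C_n) = \theta\|f_0\|^2 \le r^2$.

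It then remains to verify the defeating property. Fix any subsequence $(n_j)$. Since $(C_n)$ is independent, so are the complements $H_{n_j} = \Omega \setminus C_{n_j}$, and $\sum_j \mathbb{P}(H_{n_j}) = \sum_j \theta = \infty$. By the second Borel--Cantelli lemma, $\mathbb{P}$-almost every point lies in $H_{n_j}$ for infinitely many $j$; that is, $\chi_{C_{n_j}} = 0$ infinitely often, $\mathbb{P}$-a.e.\ and hence $\mu$-a.e.\ on $\{f_0 \ne 0\}$ (the two measures are mutually absolutely continuous there). Therefore $\liminf_j |x_{n_j}| = |f_0|\,\liminf_j \chi_{C_{n_j}} = 0$ almost everywhere, which by the first paragraph forces every strong limit $y$ to be zero. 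As this holds for all subsequences and all admissible multipliers, $L^\infty(\mu)$ is not localizing.

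The main obstacle is the construction of the independent sets $(C_n)$ for the weighted probability measure $\mathbb{P}$, and this is precisely where non-atomicity enters. I would build them inductively: having chosen $C_1, \dots, C_{n-1}$, let $P_1, \dots, P_m$ be the finitely many atoms of the algebra they generate, and, using the intermediate value property of the non-atomic measure $\mathbb{P}$ (Sierpi\'nski's theorem), select inside each $P_i$ a subset of relative $\mathbb{P}$-measure $1 - \theta$; let $C_n$ be their union. Then $\mathbb{P}(C_n \cap A) = (1-\theta)\mathbb{P}(A)$ for every $A$ in that algebra, so $C_n$ is independent of $C_1, \dots, C_{n-1}$ with $\mathbb{P}(C_n) = 1 - \theta$, and iterating produces a fully independent sequence. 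Alternatively one may invoke that a non-atomic probability space admits a measure-preserving map onto $[0,1]$ and pull back the usual dyadic independent sets.
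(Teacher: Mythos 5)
Your proposal is correct and follows essentially the same route as the paper's own proof: both use non-atomicity to build a sequence of independent sets, take test vectors of the form \(f_0\cdot\chi_{C_n}\) (the paper's \(f_n=f_0\cdot\chi_{A_n^c}\)), and combine an almost-everywhere convergent subsequence with the second Borel--Cantelli lemma to force any norm limit of \(\varphi_j x_{n_j}\) to vanish almost everywhere. The only differences are cosmetic: you impose independence with respect to the weighted measure \(|f_0|^2\,d\mu\), which makes \(\|x_n-f_0\|\) computable exactly, where the paper instead uses absolute continuity of the integral to choose sets of small \(\mu\)-measure, and you isolate explicitly the observation that contractive multipliers shrink moduli pointwise, which the paper uses implicitly.
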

\begin{proof}
Let \(f_0 \in L^2(\mu)\) and  consider a closed ball \(B= \{f \in L^2(\mu): \|f-f_0\|_2 \leq \varepsilon\}\). We must show that the algebra \(L^\infty(\mu)\) and the ball \(B\) do not satisfy the condition in the definition of a localizing algebra. First of all, there is a \(\delta >0\) such that, for each measurable subset \(A \subseteq \Omega\), the condition \(\mu(A) < \delta \) implies \(\|f_0 \cdot \chi_A\|_2 < \varepsilon\), or equivalently, \(\|f_0\cdot \chi_{A^c}-f_0\|_2<\varepsilon\). Since \(\mu\) has no atoms, we may construct a sequence \((A_n)\) of   independent, measurable subsets of \(\Omega\) such that \(\mu(A_n)=\delta/2\) for each \(n \geq 1\). Then, we define a sequence of functions \((f_n)\) inside the ball \(B\) by the expression \(f_n=f_0 \cdot \chi_{A_n^c}\). Suppose that  there is a subsequence \((f_{n_j})\), a function \(f \in L^2(\mu)\), and a sequence of functions \((\varphi_j)\) in \(L^\infty(\mu)\) such that \(\|\varphi_j\|_\infty \leq 1\) and \(\|f-\varphi_jf_{n_j}\|_2 \rightarrow 0\) as \(j \rightarrow \infty\). 
Thus, it suffices to show that \(f=0\). Now,  extract a further subsequence \((\varphi_{j_k}f_{n_{j_k}})\) that converges almost everywhere to \(f\). Next, apply the Borel-Cantelli Lemma to the sequence \((A_{n_{j_k}})\), and conclude that there is a measurable subset \(Z \subseteq \Omega\) such that \(\mu(Z)=0\) and such that  the set \(\{k \geq 1: \omega \in A_{n_{j_k}}\}\) is infinite for every \(\omega \in Z^c\). It follows  that \(f\) vanishes almost everywhere, as we wanted.
\end{proof}

\noindent
Since any maximal abelian, selfadjoint algebra can be represented as a function algebra \(L^\infty(\mu),\) as a consequence of Theorem \ref{atomic} we get the following

\begin{corollary}
\label{masa}
Let \(H\) be an infinite dimensional separable Hilbert space and let  \(\mathcal{R}\) be a maximal abelian, selfadjoint subalgebra of \(\mathcal{B}(H)\). The following conditions are equivalent:
\begin{enumerate}
\item \(\mathcal{R}\) is a localizing algebra,
\item \(\mathcal{R}\) contains a rank one operator,
\item \(\mathcal{R}\) contains a nonzero finite rank operator,
\item \(\mathcal{R}\) contains a nonzero compact operator.
\end{enumerate}
\end{corollary}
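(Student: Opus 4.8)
The plan is to prove the cycle of implications by isolating the only nontrivial step. The implications $(2) \Rightarrow (3) \Rightarrow (4)$ are immediate, since a rank one operator is a nonzero finite rank operator and a nonzero finite rank operator is a nonzero compact operator. For $(4) \Rightarrow (1)$, if $\mathcal{R}$ contains a nonzero compact operator $K$, then $K/\|K\|$ lies in $\mathrm{ball}(\mathcal{R})$ and hence in $\overline{\mathrm{ball}(\mathcal{R})}^\sigma$, so Proposition \ref{compact} applies and $\mathcal{R}$ is localizing. The whole content of the corollary is therefore the implication $(1) \Rightarrow (2)$.

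For $(1) \Rightarrow (2)$ I would first transport the problem to a function algebra. Since $\mathcal{R}$ is a maximal abelian selfadjoint subalgebra of $\mathcal{B}(H)$ with $H$ separable, there is a finite measure space $(\Omega, \Sigma, \mu)$ and a unitary $U \colon H \to L^2(\mu)$ such that $U \mathcal{R} U^{*} = L^\infty(\mu)$, acting by multiplication on $L^2(\mu)$; maximality is what forces multiplicity one and hence a cyclic vector, which yields a finite $\mu$. The property of being localizing is a unitary invariant, since a unitary carries the distinguished ball, the norm-one operators, and the nonzero limit vector of the definition to objects of the same kind, so $L^\infty(\mu)$ is localizing on $L^2(\mu)$. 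Applying the contrapositive of Theorem \ref{atomic}, the measure $\mu$ must have an atom $A$.

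It then remains to manufacture a rank one operator from the atom. The key elementary fact is that every measurable function is almost everywhere constant on an atom: if $A$ is an atom and $f$ is real measurable, the non-decreasing function $t \mapsto \mu(\{x \in A \colon f(x) \le t\})$ takes only the values $0$ and $\mu(A)$, so it jumps at a single threshold $t_0$ and $f = t_0$ almost everywhere on $A$; the complex case follows by treating real and imaginary parts. Consequently the subspace $\{g \in L^2(\mu) \colon g = 0 \text{ a.e. on } A^c\}$ coincides with the one-dimensional space spanned by $\chi_A$, and the multiplication operator $M_{\chi_A}$ is the orthogonal projection of $L^2(\mu)$ onto it. Thus $M_{\chi_A}$ is a rank one operator lying in $L^\infty(\mu)$, and pulling it back through $U$ produces a rank one operator in $\mathcal{R}$, giving $(2)$.

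The main obstacle, and the place where care is needed, is the passage from the localizing hypothesis on $\mathcal{R}$ to a concrete measure-theoretic statement: one must secure the representation of a maximal abelian selfadjoint algebra on a separable space by a genuinely finite measure so that Theorem \ref{atomic} is applicable, check that localizing really is preserved under the unitary equivalence, and verify the constancy of $L^2$-functions on an atom, which is what collapses the range of $M_{\chi_A}$ to dimension one. Everything else is a chain of trivial inclusions together with a direct appeal to Proposition \ref{compact}.
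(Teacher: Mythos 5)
Your proposal is correct and follows essentially the same route as the paper, which proves the corollary by citing the representation of a maximal abelian selfadjoint algebra as $L^\infty(\mu)$ acting on $L^2(\mu)$ (Radjavi--Rosenthal) and then invoking Theorem \ref{atomic}; your write-up simply makes explicit the details the paper leaves implicit (unitary invariance of the localizing property, a.e.\ constancy of functions on an atom so that $M_{\chi_A}$ has rank one, and the closing of the cycle via Proposition \ref{compact}).
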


\noindent
See the book of Radjavi and Rosenthal \cite[Corollary 7.14] {RR} for a reference on the representation of maximal abelian selfadjoint algebras. We finish this section with an example of a probability measure \(\mu\) and a subalgebra \(\mathcal{R} \subseteq L^\infty(\mu)\) that fails to be localizing although its closure \(\overline{\mathcal{R}}^\sigma\) in the weak operator topology is a localizing algebra. This example goes to show that the assumption that the algebra \({\mathcal R}\) is selfadjoint cannot be dropped from the hypotheses of Proposition \ref{kaplansky}.
\vskip1ex
\noindent
\begin{example} 
\label{nsa}
{\em Let \((z_k)\) be a sequence complex scalars in the open unit disc \(\mathbb{D}\) such that \(\overline{\{z_k \colon k \geq 1\}} \supseteq \partial \mathbb{D}\) and  \(\overline{\{z_k \colon k \geq 1\}} \cap [0,1/2]= \emptyset.\) Then, let \(\lambda\) denote the  Lebesgue measure on the real line and consider the probability measure
\[
\mu = \lambda |_{[0,1/2]} + \sum_{k=1}^\infty 2^{-k-1} \delta_{z_k}.
\]
Finally, consider the subalgebra \({\mathcal R} \subseteq L^\infty(\mu)\) defined as \({\mathcal R}= \{ p(M_z) \colon p \text{ is a polynomial}\}.\)  We start with a result that is an immediate consequence of the maximum modulus principle.}
\end{example}
\begin{lemma} 
\(\|p\|_{L^\infty(\mu)}= \sup \{ |p(z)| \colon z \in \partial \mathbb{D}\}\)  for every polynomial \(p.\) 
\end{lemma}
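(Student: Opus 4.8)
The plan is to compute the essential supremum $\|p\|_{L^\infty(\mu)}$ directly from the structure of $\mu$ and then to match it with $\sup\{|p(z)| \colon z\in\partial\mathbb{D}\}$ by a two-sided inequality. The first observation is that $\mu$ is carried by the set $[0,1/2]\cup\{z_k \colon k\ge 1\}$: since $\mu([0,1/2])=1/2$ and $\sum_{k\ge 1}2^{-k-1}=1/2$, the measure $\mu$ assigns no mass to the complement of this set. Consequently the essential supremum of the continuous function $|p|$ with respect to $\mu$ is governed only by the behaviour of $|p|$ on $[0,1/2]$ (through the restricted Lebesgue measure, whose support is all of $[0,1/2]$) and at the atoms $z_k$ (each being a singleton of positive measure). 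This yields the identity
\[
\|p\|_{L^\infty(\mu)}=\max\Bigl(\max_{t\in[0,1/2]}|p(t)|,\ \sup_{k\ge 1}|p(z_k)|\Bigr).
\]

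For the upper bound I would invoke the maximum modulus principle. Every point of $[0,1/2]$ and every $z_k$ lies in the closed unit disc $\overline{\mathbb{D}}$, and since $p$ is a polynomial it is analytic on $\overline{\mathbb{D}}$, so $|p(z)|\le\sup\{|p(w)| \colon w\in\partial\mathbb{D}\}$ for every $z\in\overline{\mathbb{D}}$. Taking the supremum over the support points gives $\|p\|_{L^\infty(\mu)}\le\sup\{|p(w)| \colon w\in\partial\mathbb{D}\}$.

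For the reverse inequality I would use the density hypothesis $\overline{\{z_k \colon k\ge 1\}}\supseteq\partial\mathbb{D}$. Fixing $w\in\partial\mathbb{D}$, there is a subsequence $z_{k_j}\to w$, and by continuity of $p$ we get $|p(z_{k_j})|\to|p(w)|$, whence $\sup_{k\ge 1}|p(z_k)|\ge|p(w)|$. Letting $w$ range over $\partial\mathbb{D}$ gives $\sup_{k\ge 1}|p(z_k)|\ge\sup\{|p(w)| \colon w\in\partial\mathbb{D}\}$, and a fortiori $\|p\|_{L^\infty(\mu)}\ge\sup\{|p(w)| \colon w\in\partial\mathbb{D}\}$. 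Combining the two inequalities proves the lemma.

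I expect no serious obstacle here, in keeping with the remark that this is an immediate consequence of the maximum modulus principle; the only point requiring care is the first step, namely the justification that the atoms contribute their exact values $|p(z_k)|$ to the essential supremum, because each singleton $\{z_k\}$ has positive $\mu$-measure, while the absolutely continuous part contributes exactly $\max_{[0,1/2]}|p|$, because a continuous function attains its essential supremum over the support of a measure. Note that the hypothesis $\overline{\{z_k \colon k\ge 1\}}\cap[0,1/2]=\emptyset$ plays no role in this particular computation; it is the density condition and the inclusion of the support in $\overline{\mathbb{D}}$ that do all the work.
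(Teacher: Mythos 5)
Your proof is correct and follows exactly the route the paper intends: the paper states the lemma without proof as an ``immediate consequence of the maximum modulus principle,'' and your argument fills in precisely that — the maximum modulus principle gives $\|p\|_{L^\infty(\mu)}\le\sup_{\partial\mathbb{D}}|p|$ since the support of $\mu$ lies in $\overline{\mathbb{D}}$, while the density hypothesis $\overline{\{z_k\colon k\ge 1\}}\supseteq\partial\mathbb{D}$ together with the positive mass of each atom gives the reverse inequality. Your care about how the atoms and the absolutely continuous part enter the essential supremum, and your remark that the hypothesis $\overline{\{z_k\colon k\ge 1\}}\cap[0,1/2]=\emptyset$ is not needed here, are both accurate.
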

\noindent
The next result is a  condition for an operator \(T \in \mathcal{B}(L^2(\mu))\) to belong to the weak closure of \({\rm ball}(\mathcal{R}).\)
\begin{lemma} \(T \in \overline{{\rm ball}({\mathcal R})}^\sigma\) if and only if there is some \(\varphi \in {\rm ball}(H^\infty(\mathbb{D}))\) such that \(T=M_\varphi.\)
\end{lemma}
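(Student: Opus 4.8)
The plan is to prove the two inclusions separately, leaning on the previous lemma and on one structural observation about \(\mu\). By the previous lemma and the maximum modulus principle, \(\|M_p\|_{{\mathcal B}(L^2(\mu))}=\|p\|_{L^\infty(\mu)}=\sup_{\partial\mathbb D}|p|=\|p\|_{H^\infty(\mathbb D)}\), so that \({\rm ball}({\mathcal R})\) is exactly the set of polynomial multipliers \(M_p\) with \(\|p\|_{H^\infty(\mathbb D)}\le 1\); in particular each such \(p\) satisfies \(|p|\le 1\) on all of \(\mathbb D\). The decisive feature of \(\mu\) is that it is carried by \([0,1/2]\cup\{z_k\colon k\ge 1\}\), a set lying entirely inside the open disc \(\mathbb D\). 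Hence every \(\varphi\in H^\infty(\mathbb D)\) restricts to a genuine, pointwise defined function on \({\rm supp}\,\mu\) with \(\|\varphi\|_{L^\infty(\mu)}\le\|\varphi\|_{H^\infty}\), so \(M_\varphi\) is a bounded operator on \(L^2(\mu)\) with \(\|M_\varphi\|\le\|\varphi\|_{H^\infty}\).

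For the implication from right to left I would argue by dilation. Given \(\varphi\in{\rm ball}(H^\infty(\mathbb D))\) and \(0<r<1\), put \(\varphi_r(z)=\varphi(rz)\); then \(\varphi_r\) is holomorphic on a disc of radius \(1/r>1\), so its Taylor polynomials converge to \(\varphi_r\) uniformly on \(\overline{\mathbb D}\). Since \({\rm supp}\,\mu\subset\overline{\mathbb D}\), the operator norm of a multiplier is dominated by the supremum of its symbol over \(\overline{\mathbb D}\), so these Taylor polynomials, after a harmless rescaling that brings their \(H^\infty\) norm down to at most \(1\), give elements of \({\rm ball}({\mathcal R})\) converging to \(M_{\varphi_r}\) in operator norm; thus \(M_{\varphi_r}\in\overline{{\rm ball}({\mathcal R})}^\sigma\) for each \(r\). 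Finally, as \(r\uparrow 1\) one has \(\varphi_r\to\varphi\) pointwise on \({\rm supp}\,\mu\) with \(|\varphi_r|\le 1\), so dominated convergence yields \(M_{\varphi_r}\to M_\varphi\) in the strong, hence weak, operator topology; since \(\overline{{\rm ball}({\mathcal R})}^\sigma\) is weakly closed, \(M_\varphi\) belongs to it.

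For the reverse implication I would start from \(T\in\overline{{\rm ball}({\mathcal R})}^\sigma\) and choose polynomials \(p_n\) with \(\|p_n\|_{H^\infty}\le 1\) and \(M_{p_n}\to T\) in the weak operator topology; a sequence suffices because \(L^2(\mu)\) is separable and the unit ball of \({\mathcal B}(L^2(\mu))\) carries a metrizable weak operator topology. As \(\|p_n\|_{H^\infty}\le 1\), the \(p_n\) form a uniformly bounded, hence normal, family on \(\mathbb D\); by Montel's theorem a subsequence \((p_{n_j})\) converges uniformly on compact subsets of \(\mathbb D\) to some holomorphic \(\varphi\) with \(\|\varphi\|_{H^\infty}\le 1\). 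In particular \(p_{n_j}\to\varphi\) pointwise at every point of \({\rm supp}\,\mu\), and since \(|p_{n_j}|\le 1\) there, dominated convergence gives \(M_{p_{n_j}}\to M_\varphi\) strongly, hence weakly. Being a subsequence of a sequence converging weakly to \(T\), uniqueness of weak limits forces \(T=M_\varphi\) with \(\varphi\in{\rm ball}(H^\infty(\mathbb D))\).

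The step I expect to be the main obstacle is this reverse implication, specifically guaranteeing that the limiting symbol \(\varphi\) is a bounded analytic function on the whole disc rather than a mere \(L^\infty(\mu)\) class supported on the carrier of \(\mu\). This is exactly where the hypotheses that each \(z_k\in\mathbb D\) and that the support of \(\mu\) avoids \(\partial\mathbb D\) are used: they let me run the normal families argument to produce an honest \(H^\infty\) limit and then transfer pointwise convergence on \({\rm supp}\,\mu\) into strong operator convergence. One must also keep in mind that \(\{z_k\}\) is not contained in any compact subset of \(\mathbb D\), so Montel's theorem supplies only pointwise, not uniform, convergence on \({\rm supp}\,\mu\); this, fortunately, is all that the dominated convergence argument requires.
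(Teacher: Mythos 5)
Your proof is correct and takes essentially the same route as the paper: Montel's theorem plus bounded/dominated convergence for the forward implication, and the dilation \(\varphi_r(z)=\varphi(rz)\) plus bounded convergence for the converse. The only cosmetic differences are that you pin down the limit via metrizability of the weak operator topology on bounded sets and uniqueness of weak limits, where the paper instead uses that \(L^\infty(\mu)\) is closed in the weak operator topology and that this topology agrees there with the weak-\(\ast\) topology \(\sigma(L^\infty,L^1)\), and that you justify approximating \(M_{\varphi_r}\) in norm by explicit Taylor polynomials where the paper invokes \(\varphi_r \in {\rm ball}(A(\mathbb{D})).\)
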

\begin{proof} Notice that \(L^\infty(\mu)\) is  closed in the weak operator topology and that the  weak operator topology restricted to \(L^\infty(\mu)\) agrees with weak-\(\ast\) topology \(\sigma(L^\infty,L^1).\) Thus, given an operator  \(T \in \overline{{\rm ball}({\mathcal R})}^\sigma,\) there is a function \(\psi \in L^\infty (\mu)\) such that \(T=M_\psi,\) and there exists a sequence of polynomials \((p_n)\) such that \(\|p_n\|_\infty \leq 1\) and \(p_n \to \psi\) in the weak-\(\ast\) topology. Then, it follows from Montel's theorem that there is a subsequence \((p_{n_j})\) and there is a function \(\varphi \in {\rm ball}(H^\infty(\mathbb{D}))\) such that \(p_{n_j} \to \varphi\) uniformly on compact subsets of \(\mathbb{D}.\) Hence, \(p_{n_j} \to \varphi\) almost everywhere, and it follows from the bounded convergence theorem that \(p_{n_j} \to \varphi\) in the weak-\(\ast\) topology. Therefore, \(\varphi=\psi\) and \(T=M_\varphi,\) as we wanted. Conversely, suppose that there is some \(\varphi \in {\rm ball}(H^\infty(\mathbb{D}))\) such that \(T=M_\varphi,\) and let \(\varphi_r(z)=\varphi(rz)\) for \(0<r<1,\) so that \(\varphi_r(z) \to \varphi (z)\) as \(r \to 1^-\) for all \(z \in \mathbb{D},\) and it follows from the bounded convergence theorem that \(\varphi_r \to \varphi\) in the weak-\(\ast\) topology.  Since \(\varphi_r \in {\rm ball}(A({\mathbb D})),\) we have \(M_{\varphi_r} \in \overline{{\rm ball}({\mathcal R})}^{\| \cdot \|},\) so that  \(M_\varphi \in \overline{{\rm ball}({\mathcal R})}^\sigma.\) 
\end{proof}
\begin{theorem} \label{blaschke} If the sequence \((z_k)\) satisfies  the Blaschke condition 
\[
\displaystyle{\sum_{k=1}^\infty (1-|z_k|) < \infty}
\]
then the algebra \(\overline{{\mathcal R}}^\sigma\)  contains a rank one operator and therefore it is localizing.
\end{theorem}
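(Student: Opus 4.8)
The plan is to exhibit a concrete rank one operator inside \(\overline{\mathcal{R}}^\sigma\) and then invoke Proposition \ref{compact}. The operator I will use is the multiplication operator \(M_{\chi_{\{z_1\}}}\), that is, the orthogonal projection of \(L^2(\mu)\) onto the one dimensional subspace spanned by the atom at \(z_1\); its norm equals \(\|\chi_{\{z_1\}}\|_{L^\infty(\mu)}=1\), so once it is shown to lie in \(\overline{\mathcal{R}}^\sigma\) it lies in \({\rm ball}(\overline{\mathcal{R}}^\sigma)\), and Proposition \ref{compact} immediately yields that \(\overline{\mathcal{R}}^\sigma\) is localizing. The essential point, and the reason the preceding lemma does not already settle the matter, is that the closure \(\overline{\mathcal{R}}^\sigma\) of the whole algebra is genuinely larger than the closure \(\overline{{\rm ball}({\mathcal R})}^\sigma=\{M_\varphi:\varphi\in{\rm ball}(H^\infty)\}\) of its unit ball: every bounded weak limit of polynomials is an \(H^\infty\) multiplier and hence of infinite rank, so the rank one operator can only arise from unbounded nets of polynomials.

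To locate \(\overline{\mathcal{R}}^\sigma\) I will use, as in the previous lemma, that the weak operator topology agrees on \(L^\infty(\mu)\) with the weak-\(\ast\) topology \(\sigma(L^\infty,L^1)\), so that \(\overline{\mathcal{R}}^\sigma\) is exactly the weak-\(\ast\) closure of the polynomials in \(L^\infty(\mu)\). By the bipolar theorem this closure is the annihilator of its pre-annihilator, so \(\chi_{\{z_1\}}\in\overline{\mathcal{R}}^\sigma\) if and only if every \(h\in L^1(\mu)\) satisfying \(\int z^n h\,d\mu=0\) for all \(n\ge 0\) also satisfies \(\int\chi_{\{z_1\}}h\,d\mu=h(z_1)\mu(\{z_1\})=0\), that is, \(h(z_1)=0\). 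Writing \(\nu=h\,d\mu\), a finite complex measure carried by \([0,1/2]\cup\{z_k\}\subseteq\mathbb{D}\) all of whose moments vanish, the goal becomes to prove \(\nu(\{z_1\})=0\). First I will upgrade the vanishing of moments to \(\int f\,d\nu=0\) for every \(f\in H^\infty(\mathbb{D})\): applying this to the dilates \(f_r(z)=f(rz)\), which are analytic on a neighbourhood of \(\overline{\mathbb{D}}\) and hence uniform limits of their Taylor polynomials there, gives \(\int f_r\,d\nu=0\), and letting \(r\to 1^-\) with dominated convergence (the measure \(\nu\) is finite and \(|f_r|\le\|f\|_\infty\)) gives \(\int f\,d\nu=0\).

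With this in hand the Blaschke condition enters. Let \(B\) be the Blaschke product with zeros exactly at the points \(z_k\) and let \(B_1\) be the Blaschke product with zeros at the \(z_k\) with \(k\ge 2\); the summability \(\sum_k(1-|z_k|)<\infty\) is precisely what guarantees that these infinite products define nonzero functions in \({\rm ball}(H^\infty(\mathbb{D}))\). Testing \(\nu\) against the functions \(B(z)z^n\in H^\infty\) annihilates every atom and leaves \(\int_0^{1/2}B(t)t^n h(t)\,dt=0\) for all \(n\ge 0\); since \(B\) has no zeros on \([0,1/2]\) (because \(\overline{\{z_k\}}\cap[0,1/2]=\emptyset\)) and is therefore bounded away from \(0\) on that compact interval, the moment problem on \([0,1/2]\) forces the absolutely continuous part \(h|_{[0,1/2]}\) to vanish. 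Thus \(\nu\) is purely atomic, and testing it against \(B_1\in H^\infty\), which vanishes at every \(z_k\) with \(k\ge 2\) but not at \(z_1\), leaves \(h(z_1)\mu(\{z_1\})B_1(z_1)=0\), whence \(h(z_1)=0\), as required.

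The step I expect to require the most care is the passage from vanishing moments to annihilation of all of \(H^\infty(\mathbb{D})\): the support of \(\nu\) is not compact in \(\mathbb{D}\), since the points \(z_k\) accumulate on the whole boundary \(\partial\mathbb{D}\), so one cannot approximate a general \(f\in H^\infty\) uniformly on the support by polynomials directly, and the dilation argument together with dominated convergence is what circumvents this. Everything else is a routine combination of the duality description of the weak closure with the elementary properties of the Blaschke products that the Blaschke condition makes available.
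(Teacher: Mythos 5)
Your proof is correct, but it reaches the rank one operator by a genuinely different mechanism than the paper, even though both arguments target the same operator \(M_{\chi_{\{z_1\}}}\) and rely on the same two Blaschke products: \(B\), vanishing at every \(z_k\) and bounded away from zero on \([0,1/2]\), and \(B_1\), vanishing at \(z_k\) for \(k \geq 2\) but not at \(z_1\). The paper works on the primal side: it first shows that \(M_{\widetilde{\varphi}} \in \overline{{\mathcal R}}^\sigma\) for every \(\varphi \in C[0,1/2]\) (where \(\widetilde{\varphi}\) denotes extension by zero to the atoms), by approximating \(\varphi\) uniformly on \([0,1/2]\) by functions \(p_n \cdot B\); since these vanish identically on the atoms, the approximation is actually in \(L^\infty(\mu)\)-norm, and the rank one operator is then obtained as the difference of \(M_{B_1}\) and the operator associated with the zero-extension of \(B_1|_{[0,1/2]}\). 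You instead work on the dual side: having identified \(\overline{{\mathcal R}}^\sigma\) with the weak-\(\ast\) closure of the polynomials, you apply the bipolar theorem and show that every \(h \in L^1(\mu)\) annihilating all monomials satisfies \(h(z_1)=0\), via the dilation argument (to pass from vanishing moments to annihilation of all of \(H^\infty(\mathbb{D})\)), then testing against \(B z^n\) (to kill the absolutely continuous part through the Weierstrass theorem), then against \(B_1\) (to isolate the atom at \(z_1\)). Note that your dilation-plus-dominated-convergence step is the dual counterpart of the paper's preceding lemma identifying \(\overline{{\rm ball}({\mathcal R})}^\sigma\) with the \(H^\infty\) multipliers of norm at most one, which is what the paper invokes to place \(M_{p_n \cdot B}\) and \(M_{B_1}\) in \(\overline{{\mathcal R}}^\sigma\). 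As for what each approach buys: your duality argument is self-contained and avoids operator-norm approximation altogether, and your opening observation---that the rank one operator cannot lie in \(\overline{{\rm ball}({\mathcal R})}^\sigma\), so it can only arise from unbounded sequences of polynomials---cleanly explains why the unit-ball lemma alone cannot settle the theorem; the paper's constructive route, on the other hand, yields more with the same effort, since the assignment \(\varphi \mapsto M_{\widetilde{\varphi}}\) is exactly what powers Lemma \ref{interpol}, showing that \(\overline{{\mathcal R}}^\sigma\) is in fact all of \(L^\infty(\mu)\).
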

\begin{proof} Start with  a \(B \in {\rm ball}(H^\infty(\mathbb{D}))\) such that \(B(z_k)=0\) for all \(k \geq 1\) and \(B(x) \neq 0\) for all \(x \in [0,1/2].\) Notice that the family \(\{p \cdot B \colon p \text{ is a polynomial}\}\) is dense in \(C[0,1/2]\) since the polynomials are dense and the multiplication operator \(M_B\) is invertible on  \(C[0,1/2].\) Now,  every function \(\varphi \in C[0,1/2]\) can be extended to a function \(\widetilde{\varphi}\in L^\infty(\mu) \) given  by the expression
\[
\widetilde{\varphi}(z) = \left \{
\begin{array}{rl} \varphi(z), & \text{if } z \in [0,1/2],\\
				0, & \text{if } z \in \{z_k \colon k \geq 1\}.
\end{array}
\right.
\]
We claim that \(M_{\widetilde{\varphi}} \in \overline{{\mathcal R}}^\sigma\) for every \(\varphi \in C[0,1/2].\) Indeed, let \((p_n)\) be a sequence of polynomials such that \(p_n \cdot B \to \varphi\) uniformly on \([0,1/2].\) Since \(p_n \cdot B\) vanishes identically on \(\{z_k \colon k \geq 1\},\) it follows that \(p_n \cdot B \to \widetilde{\varphi}\) on \(L^\infty(\mu),\)  and since \(p_n \cdot B \in H^\infty(\mathbb{D}),\) we obtain \(M_{p_n \cdot B} \in \overline{{\mathcal R}}^\sigma,\) and we conclude that \(M_{\widetilde{\varphi}} \in \overline{{\mathcal R}}^\sigma.\) Next, take a \(B_1 \in  H^\infty(\mathbb{D})\) such that \(B_1(z_1)=1\) and \(B_1(z_k) =0\) for all \(k \geq 2.\) Then,  consider the function  defined as \(\varphi=B_1-B_1 \cdot \chi_{[0,1/2]}.\) Since  \(B_1 \in \overline{{\mathcal R}}^\sigma\) and since \(B_1 \cdot \chi_{[0,1/2]} =\widetilde{B}_1|_{[0,1/2]}  \in \overline{{\mathcal R}}^\sigma,\) we obtain \(M_\varphi \in  \overline{{\mathcal R}}^\sigma.\) Notice that \(M_\varphi\) is a rank one operator, since \(\varphi(z)=1\) for \(z=z_1\) and \( \varphi(z)=0\) for \(z \neq z_1.\)
\end{proof}

We have shown so far that the algebra \(\overline{{\mathcal R}}^\sigma\) is localizing because it contains a rank one operator. This is all we need for our construction, although something stronger can be said, namely, that \(\overline{{\mathcal R}}^\sigma = L^\infty(\mu).\) Since the measure \(\mu\) contains many atoms, there are many  rank one  operators in \(\overline{{\mathcal R}}^\sigma .\)

\begin{lemma} \label{interpol} If the sequence \((z_k)\) satisfies  the Blaschke condition 
\[
\displaystyle{\sum_{k=1}^\infty (1-|z_k|) < \infty}
\]
then the algebra \({\mathcal R}\)  is dense in \(L^\infty(\mu)\) with respect to the weak operator topology.
\end{lemma}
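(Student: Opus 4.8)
The plan is to upgrade the argument in the proof of Theorem \ref{blaschke} into the stronger statement $\overline{\mathcal{R}}^\sigma = L^\infty(\mu)$, exploiting the splitting of $L^\infty(\mu)$ induced by the decomposition of $\mu$ into its continuous part $\lambda|_{[0,1/2]}$ and its atomic part $\sum_k 2^{-k-1}\delta_{z_k}$. Throughout I will use that $\overline{\mathcal{R}}^\sigma$ is a linear subspace of $\mathcal{B}(L^2(\mu))$ that is closed in the weak operator topology, and that on $L^\infty(\mu)$ this topology agrees with the weak-$\ast$ topology $\sigma(L^\infty,L^1)$, as observed earlier. The inputs I would take from the proof of Theorem \ref{blaschke} are that $M_{\widetilde\varphi}\in\overline{\mathcal{R}}^\sigma$ for every $\varphi\in C[0,1/2]$, where $\widetilde\varphi$ extends $\varphi$ by zero on the atoms, and that each rank one multiplier $M_{\chi_{\{z_j\}}}$ lies in $\overline{\mathcal{R}}^\sigma$; the latter follows exactly as for $z_1$ upon choosing $B_j\in H^\infty(\mathbb{D})$ with $B_j(z_j)=1$ and $B_j(z_k)=0$ for $k\neq j$, which exists because the Blaschke condition furnishes a convergent Blaschke product whose single factor at $z_j$ can be divided out and normalized.

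First I would treat the continuous part. Given $\psi\in L^\infty[0,1/2]$, I would produce continuous functions $\varphi_n$ with $\|\varphi_n\|_\infty\leq\|\psi\|_\infty$ and $\varphi_n\to\psi$ almost everywhere, for instance via Lusin's theorem. By the bounded convergence theorem, $\int\varphi_n h\,d\lambda\to\int\psi h\,d\lambda$ for every $h\in L^1[0,1/2]$, so $M_{\widetilde{\varphi_n}}\to M_{\widetilde\psi}$ in the weak operator topology, and since each $\widetilde{\varphi_n}$ vanishes on the atoms, so does the limit. Hence $M_{\widetilde\psi}\in\overline{\mathcal{R}}^\sigma$ for every $\psi\in L^\infty[0,1/2]$.

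Next I would treat the atomic part. For any bounded scalar sequence $(c_k)$, the finite sums $\sum_{k=1}^N c_k M_{\chi_{\{z_k\}}}$ belong to $\overline{\mathcal{R}}^\sigma$, and since $\sum_k 2^{-k-1}|f(z_k)|\,|g(z_k)|<\infty$ for $f,g\in L^2(\mu)$ by the Cauchy--Schwarz inequality, these partial sums converge in the weak operator topology to $M_\psi$, where $\psi=\sum_k c_k\chi_{\{z_k\}}$ is supported on the atoms. Thus every multiplier supported on the atomic part of $\mu$ lies in $\overline{\mathcal{R}}^\sigma$.

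Finally, any $\psi\in L^\infty(\mu)$ decomposes as $\psi=\psi\cdot\chi_{[0,1/2]}+\psi\cdot\chi_{\{z_k\colon k\geq 1\}}$, so $M_\psi$ is the sum of the two multipliers just shown to lie in $\overline{\mathcal{R}}^\sigma$; since $\overline{\mathcal{R}}^\sigma$ is a subspace, $M_\psi\in\overline{\mathcal{R}}^\sigma$, giving $\overline{\mathcal{R}}^\sigma=L^\infty(\mu)$, which is the assertion. There is no deep obstacle here; the only point requiring care—and the sole place where the continuous and atomic parts could interfere—is to keep the approximants exactly zero on the complementary piece of the measure, and this is automatic above because the pairing with $L^1(\mu)=L^1[0,1/2]\oplus\ell^1(\{z_k\})$ decouples the two parts.
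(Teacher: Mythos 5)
Your proof is correct and follows essentially the same route as the paper's: both split \(L^\infty(\mu)\) along the continuous/atomic decomposition of \(\mu\), rely on the same two ingredients extracted from the proof of Theorem \ref{blaschke} (extension by zero of continuous functions on \([0,1/2]\), and interpolation at the atoms via the Blaschke-type functions \(B_k\)), and pass to weak-\(\ast\) (equivalently, weak operator) limits inside the WOT-closed subspace \(\overline{{\mathcal R}}^\sigma\). The only difference is organizational: you show separately that each of the two summands of an arbitrary \(\psi \in L^\infty(\mu)\) lies in \(\overline{{\mathcal R}}^\sigma\) (Lusin's theorem plus dominated convergence for the continuous part, WOT-convergent series of rank-one multipliers controlled by Cauchy--Schwarz for the atomic part) and then add, whereas the paper constructs a single approximating sequence \(\Phi_n\) that handles a weak-\(\ast\) approximation of the continuous part and a truncation of the atomic data simultaneously.
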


\begin{proof} We claim that for every sequence of scalars \(\alpha=(\alpha_k)\) with \(\alpha_k =0\) for all \(k >N\) there is a \(\varphi \in \overline{{\mathcal R}}^\sigma\) such that \(\varphi(x)=0\) for all \(x \in [0,1/2]\) and such that \(\varphi(z_k)=\alpha_k\) for all \(k \geq 1.\) Indeed, take a sequence \((B_k)\) in \(H^\infty(\mathbb{D})\) such that \(B_k(z_j)=1\) if \(j=k\) and \(B_k(z_j)=0\) if \(j \neq k.\) Then, the required conditions are fulfilled by the function
\[
\varphi(z)=\sum_{k=1}^N \alpha_k B_k(z).
\]
Next, we claim that for every \(\psi \in C[0,1/2]\) there is a \(\Phi \in \overline{{\mathcal R}}^\sigma\) such that \(\Phi(x)= \psi(x)\) for all \(x \in [0,1/2]\) and \(\Phi(z_k)=\alpha_k\) for all \(k \geq 1.\) Indeed, let  \(\varphi\) be a function as above and notice that the function \(\Phi=\varphi + \widetilde{\psi}\) does the job. Finally, we show that \({\mathcal R}\) is dense in \(L^\infty(\mu)\) with respect to the weak operator topology. Take a function \(\varphi \in L^\infty(\mu).\) There is a sequence of functions \((\psi_n)\) in \(C[0,1/2]\) such that  \(\psi_n \to \varphi |_{[0,1/2]}\) in the weak-\(\ast\) topology. Then, consider for every \(n \geq 1\) the sequence of scalars \(\alpha^n=(\alpha^n_k)_{k \geq 1}\) defined by
\[
\alpha^n_k = \left \{
\begin{array}{rl} \varphi(z_k), & \text{if } 1 \leq k \leq n,\\
				0, & \text{if } k >n.
\end{array}
\right.
\]
Let \(\Phi_n \in \overline{{\mathcal R}}^\sigma \) be the function associated with \(\psi_n\) and \(\alpha^n,\) that is, \(\Phi_n= \varphi + \widetilde{\psi}_n,\) and notice that \(\Phi_n \to \varphi\) in the weak-\(\ast\) topology, as we wanted. 
\end{proof}
\begin{theorem} The algebra \({\mathcal R}\) is not localizing on the Hilbert space \(L^2(\mu).\)
\end{theorem}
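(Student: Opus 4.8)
The plan is to show that for every closed ball $B=\{f\in L^2(\mu)\colon\|f-f_0\|_2\le\varepsilon\}$ with $0\notin B$ (so $\|f_0\|_2>\varepsilon$) there is a sequence in $B$ that defeats the defining condition. Write the orthogonal decomposition $L^2(\mu)=L^2([0,1/2])\oplus K$, where $K$ is the atomic part carried by the atoms $z_k$, and split $f_0=g_0+h_0$ accordingly. Mimicking the proof of Theorem \ref{atomic}, I would choose $\delta\in(0,1)$ with $\|g_0\cdot\chi_A\|_2\le\varepsilon$ whenever $\mu(A)<\delta$, and use that $\lambda|_{[0,1/2]}$ is atomless to build independent measurable sets $A_n\subseteq[0,1/2]$ with $\lambda(A_n)=\delta/2$. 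I then fix a \emph{nonzero} $w\in L^2([0,1/2])$ — taking $w=g_0$ when $g_0\ne0$, and otherwise a small continuous bump with $\|w\|_2\le\varepsilon$ — and define $f_n=w\cdot\chi_{A_n^c}+h_0$. Then $f_n\in B$, and it remains to prove that whenever a subsequence $(f_{n_j})$ and polynomials $(p_j)$ with $\|p_j\|_{L^\infty(\mu)}\le1$ satisfy $p_jf_{n_j}\to y$ in $L^2(\mu)$, the limit must be $y=0$.

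First I would pass to the analytic side exactly as in the proof of the lemma characterizing $\overline{\mathrm{ball}(\mathcal R)}^\sigma$: by the preceding lemma $\|p_j\|_{L^\infty(\mu)}=\sup_{\partial\mathbb D}|p_j|$, so the maximum modulus principle gives $\sup_{\overline{\mathbb D}}|p_j|\le1$, and Montel's theorem yields (after passing to a further subsequence, which does not alter the fixed limit $y$) a function $\varphi\in\mathrm{ball}(H^\infty(\mathbb D))$ with $p_j\to\varphi$ uniformly on compact subsets of $\mathbb D$. Projecting onto the two summands, the continuous part $p_j\,w\,\chi_{A_{n_j}^c}$ converges to $y_c$ and the atomic part $p_jh_0$ converges to $y_a$. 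Since $[0,1/2]$ is a compact subset of $\mathbb D$, $p_j\to\varphi$ uniformly there, so $\|(p_j-\varphi)w\|_2\to0$ and hence $\varphi\,w\,\chi_{A_{n_j}^c}\to y_c$ as well. A Borel--Cantelli argument identical to the one in Theorem \ref{atomic} (almost every point of $[0,1/2]$ lies in infinitely many $A_{n_j}$) forces $y_c=0$, that is, $\varphi\,w\,\chi_{A_{n_j}^c}\to0$ in $L^2$.

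The heart of the argument is to upgrade this to $\varphi\equiv0$. Here I would exploit that the centered indicators $\chi_{A_n}-\delta$ form an orthogonal system in $L^2(\lambda|_{[0,1/2]})$ of constant, nonzero norm (a direct computation from the independence relation $\lambda(A_n\cap A_m)=\delta^2/2$), so they tend weakly to $0$; equivalently $\chi_{A_n^c}\to1-\delta$ weakly against every $L^1$ function. Applying this to $F=|\varphi w|^2\in L^1$ gives $\int_{A_{n_j}^c}|\varphi w|^2\,d\lambda\to(1-\delta)\int_{[0,1/2]}|\varphi w|^2\,d\lambda$, while $\varphi w\chi_{A_{n_j}^c}\to0$ forces the left-hand side to $0$. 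As $1-\delta>0$, we get $\varphi w=0$ almost everywhere on $[0,1/2]$, hence $\varphi=0$ on the positive-measure set $\{w\ne0\}\subseteq[0,1/2]$; since this set has a limit point in $\mathbb D$, the identity theorem gives $\varphi\equiv0$. Consequently $p_j(z_k)\to0$ for every atom $z_k\in\mathbb D$, and since $|p_j(z_k)|\le1$, dominated convergence in $K$ yields $p_jh_0\to0$, so $y_a=0$. Therefore $y=y_c+y_a=0$, which is exactly what defeats the localizing condition for the ball $B$.

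The main obstacle is this last step: translating the oscillatory vanishing $\varphi w\chi_{A_{n_j}^c}\to0$ into the global conclusion $\varphi w=0$, where the independence of the $A_n$ (through the weak limit of their indicators, or equivalently a strong law of large numbers / Ces\`aro argument) is essential. The conceptual point being used is that the operators drawn from $\mathrm{ball}(\mathcal R)$ are, in the limit, multiplications by bounded \emph{analytic} symbols, so the fixed nonzero continuous factor $w$ forces the limiting symbol $\varphi$ to vanish on $[0,1/2]$ and hence everywhere, thereby killing the atomic part as well — even though the atoms alone would otherwise support a nonzero limit (take $p_j\equiv1$). This is precisely the gap between $\mathcal R$ and its weak closure $\overline{\mathcal R}^\sigma=L^\infty(\mu)$, the latter being localizing because it contains rank one operators isolating a single atom, something no bounded analytic symbol can achieve.
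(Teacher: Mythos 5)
Your proof is correct, and although it shares its skeleton with the paper's (independent sets $A_n \subseteq [0,1/2]$, Montel extraction of a limiting symbol $\varphi \in H^\infty(\mathbb{D})$, Borel--Cantelli, the identity theorem, and dominated convergence to kill the atomic part), it closes the argument by a genuinely different route. The paper argues by contradiction in two stages: first it takes $f_n = f_0 - f_0\chi_{A_n}$ (exactly your sequence when $w = g_0$) and, after two Borel--Cantelli applications, reaches the dichotomy that either $\varphi \equiv 0$ or $f_0 \equiv 0$ a.e.\ on $[0,1/2]$, ruling out the first horn by bounded convergence; only then, knowing the continuous part of $f_0$ vanishes, does it run a second sequence $f_n = f_0 + \chi_{A_n}$ with two further Borel--Cantelli applications to force $\varphi \equiv 0$ and hence $g = 0$. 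Your device of replacing $g_0$ by a guaranteed-nonzero $w$ (a bump when $g_0 = 0$) collapses the two stages into one and removes the dichotomy: $\varphi w = 0$ with $w \neq 0$ gives $\varphi \equiv 0$ at once. Your second departure is the mechanism that yields $\varphi w = 0$: where the paper applies Borel--Cantelli a second time, to the complements $A_n^c$, to identify the a.e.\ limit as $\varphi w$, you use independence quantitatively --- the centered indicators $\chi_{A_n} - \delta$ are orthogonal with constant nonzero norm, hence weakly null, so $\int_{A_{n_j}^c} |\varphi w|^2 \, d\lambda \to (1-\delta)\|\varphi w\|_2^2$ --- which converts the $L^2$-vanishing of $\varphi w \chi_{A_{n_j}^c}$ directly into $\|\varphi w\|_2 = 0$. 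Both mechanisms are sound; the paper's stays entirely within pointwise a.e.\ reasoning, while yours is shorter, treats all centers $f_0$ uniformly, and makes explicit the quantitative role of independence (it is what prevents the mass of $|\varphi w|^2$ from concentrating on the exceptional sets $A_{n_j}$). Two small points to tighten in a final write-up: pass explicitly to an a.e.-convergent subsequence before invoking Borel--Cantelli, and justify testing the weak limit against the $L^1$ function $|\varphi w|^2$ (orthogonality gives weak convergence in $L^2$; the uniform bound $\|\chi_{A_n^c}\|_\infty \le 1$ plus truncation upgrades it to weak-$*$ convergence against $L^1$).
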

\begin{proof}  Let \(f_0 \in L^2(\mu)\) and consider a closed ball \(B= \{ f \in L^2(\mu) \colon \|f-f_0\|_2 \leq \varepsilon\}.\) We proceed by contradiction. Suppose that the algebra \({\mathcal R}\) and the ball \(B\)  satisfy the condition  in the definition of a localizing algebra. 

{\bf Claim:}  \(f_0=0\) almost everywhere on \([0,1/2].\) Indeed, there is a \(\delta >0\) such that, for every Borel subset \(A \subseteq  [0,1/2],\) the condition \(\mu(A) < \delta \) implies \(\|f_0 \cdot \chi_A\|_2 < \varepsilon.\) Then, we may construct a sequence \((A_n)\) of   independent Borel subsets of \([0,1/2]\) such that \(\mu(A_n)=\delta/2\) for each \(n \geq 1\). Next, we define a sequence of functions \((f_n)\) inside the ball \(B\) by the expression \(f_n=f_0 - f_0 \cdot \chi_{A_n}.\) Now, there exists a subsequence \((f_{n_j}),\)  a sequence \((\varphi_j)\)  in \({\rm ball}({\mathcal R})\) and a nonzero  \(g \in L^2(\mu)\) such that \(\varphi_j f_{n_j} \to g\)  in \(L^2(\mu).\) Then, apply Montel's theorem to obtain another subsequence \((\varphi_{j_k})\) and a function \(\varphi \in H^\infty(\mathbb{D})\) such that \(\varphi_{j_k} \to \varphi\) uniformly on compact subsets of \(\mathbb{D}.\) Since \(\varphi_{j_k} f_{n_{j_k}} \to g\) in \(L^2(\mu),\) we may extract a further subsequence \((\varphi_{j_{k_l}})\) such that \(\varphi_{j_{k_l}} f_{n_{j_{k_l}}} \to g\) almost everywhere.  Next, apply the Borel-Cantelli Lemma to the sequence \((A_{n_{j_{k_l}}})\), and conclude that there is a measurable subset \(Z_1 \subseteq [0,1/2]\) such that \(\mu(Z_1)=0\) and such that  the set \(\{l \geq 1: x \in A_{n_{j_{k_l}}}\}\) is infinite for every \(x \in Z_1^c\). It follows  that \(g(x)=0\)  almost everywhere on \([0,1/2].\) Then, apply once again the Borel-Cantelli Lemma to the sequence \((A^c_{n_{j_{k_l}}})\), and conclude that there is a measurable subset \(Z_2 \subseteq [0,1/2]\) such that \(\mu(Z_2)=0\) and such that  the set \(\{l \geq 1: x \in A^c_{n_{j_{k_l}}}\}\) is infinite for every \(x \in Z_2^c\).  It follows that \(f_0(x)\varphi(x)=g(x)=0\) almost everywhere  on \([0,1/2].\)  Since \(\varphi\) is  an analytic function, there are two possibilities: either \(\varphi \equiv 0\) or \(f_0 \equiv 0\) almost everywhere on \([0,1/2].\) Now we prove that the first possibility leads to a contradiction. We have \(|f_{n_{j_k}} | \leq |f_0|,\) so that \(|\varphi_{j_k}  f_{n_{j_k}} | \leq | \varphi_{j_k}|  \cdot  |f_0| \to 0,\) and it follows from the bounded convergence theorem that \(\varphi_{j_k}  f_{n_{j_k}} \to 0\) in \(L^2(\mu).\) Hence, \(g=0,\) and we arrived at a contradiction. The proof of our claim is now complete. 

Now consider the sequence \((f_n)\) in \(B\) defined as \(f_n=f_0 + \chi_{A_n}.\)  Since \({\mathcal R}\) is localizing, there exists a subsequence \((f_{n_j}),\)  a sequence \((\varphi_j)\)  in \({\rm ball}({\mathcal R}),\) and a nonzero  \(g \in L^2(\mu)\) such that \(\varphi_j f_{n_j} \to g\)  in \(L^2(\mu).\)  Then, apply Montel's theorem to obtain another subsequence \((\varphi_{j_k})\) and a function \(\varphi \in H^\infty(\mathbb{D})\) such that \(\varphi_{j_k} \to \varphi\) uniformly on compact subsets of \(\mathbb{D}.\) Next, extract a further subsequence \((\varphi_{j_{k_l}})\) such that \(\varphi_{j_{k_l}} f_{n_{j_{k_l}}} \to g\) almost everywhere on \(\mathbb{D}.\)   Then, apply the Borel-Cantelli Lemma to the sequence \((A_{n_{j_{k_l}}})\), and conclude that there is a measurable subset \(Z_1 \subseteq [0,1/2]\) such that \(\mu(Z_1)=0\) and such that  the set \(\{l \geq 1: x \in A_{n_{j_{k_l}}}\}\) is infinite for every \(x \in Z_1^c\). Hence, \(\varphi_{j_{k_l}} f_{n_{j_{k_l}}} \to 0\)  almost everywhere on \([0,1/2].\)  Then, apply once again the Borel-Cantelli lemma to the sequence  \((A^c_{n_{j_{k_l}}})\), and conclude that there is a measurable subset \(Z_2 \subseteq [0,1/2]\) such that \(\mu(Z_2)=0\) and such that  the set \(\{l \geq 1: x \in A^c_{n_{j_{k_l}}}\}\) is infinite for every \(x \in Z_2^c\). Hence, \(\varphi_{j_{k_l}} f_{n_{j_{k_l}}} \to \varphi(x)\)  almost everywhere on \([0,1/2].\) Therefore, we get \(\varphi(x)=0\) almost everywhere on \([0,1/2],\) and since \(\varphi\) is analytic, we conclude that \(\varphi(z)=0\) for all \(z \in \mathbb{D}.\) Finally, \(|\varphi_{j_{k_l}} f_{n_{j_{k_l}}}| \leq (1+|f_0|) |\varphi_{j_{k_l}}| \to 0,\) so that it follows from the bounded convergence theorem that \(g=0,\) and a contradiction has arrived.
\end{proof}

\section{Bounded analytic multipliers on the Bergman space and the Hardy space}
\label{bam}
\noindent
We now consider algebras of multiplication operators on Hilbert spaces of analytic functions. We identify every bounded analytic function \(\varphi \in H^\infty(\mathbb{D})\)  with the multiplication operator \(M_\varphi\) defined either on the Bergman space \(A^2(\mathbb{D})\) or on the Hardy space \(H^2(\mathbb{D})\) by the expression \((M_\varphi f)(z)=\varphi(z)f(z).\) Then, \(H^\infty(\mathbb{D})\) becomes a subalgebra of both \(\mathcal{B}(A^2(\mathbb{D}))\)  and \(\mathcal{B}(H^2(\mathbb{D}))\) under this identification.
\begin{theorem}
\label{bergman}
The algebra \(H^\infty(\mathbb{D})\) of all bounded analytic functions on the unit disc is not localizing on the Bergman space \(A^2(\mathbb{D})\).
\end{theorem}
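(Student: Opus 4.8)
The plan is to show directly that no closed ball can serve in the definition of a localizing algebra. So I fix an arbitrary closed ball \(B = \{f \in A^2(\mathbb{D}) : \|f - f_0\|_2 \le \varepsilon\}\) with \(0 \notin B,\) exhibit one sequence \((f_n)\) inside \(B,\) and prove that whenever a subsequence \((f_{n_j})\) together with multipliers \(\varphi_j \in H^\infty(\mathbb{D})\) satisfying \(\|\varphi_j\|_\infty \le 1\) make \(\varphi_j f_{n_j}\) converge in norm, the limit must be \(0.\) The natural candidate is \(f_n = f_0 + \varepsilon e_n,\) where \(e_n(z) = \sqrt{n+1}\, z^n\) is the normalized monomial; then \(\{e_n\}\) is an orthonormal basis of \(A^2(\mathbb{D}),\) we have \(\|f_n - f_0\|_2 = \varepsilon,\) and \((e_n)\) tends to \(0\) both weakly and uniformly on compact subsets of \(\mathbb{D}.\)

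Next I would reduce the multipliers to a single analytic limit. Since \(\|\varphi_j\|_\infty \le 1,\) Montel's theorem lets me extract a further subsequence along which \(\varphi_j \to \varphi\) uniformly on compact subsets of \(\mathbb{D}\) for some \(\varphi \in {\rm ball}(H^\infty(\mathbb{D})).\) Two elementary observations then pin down the limit. First, \(\varphi_j f_0 \to \varphi f_0\) in norm by the bounded convergence theorem, since \(|\varphi_j f_0 - \varphi f_0|^2 \le 4|f_0|^2\) and the integrand tends to \(0\) pointwise almost everywhere. Second, because point evaluations are bounded on \(A^2(\mathbb{D}),\) norm convergence forces pointwise convergence; and since \(\varphi_j(z) e_{n_j}(z) \to 0\) for each fixed \(z \in \mathbb{D},\) the norm limit \(g\) of \(\varphi_j f_{n_j}\) must equal \(\varphi f_0.\) Subtracting the first observation, I obtain \(\varphi_j e_{n_j} \to 0\) in norm, so the whole statement reduces to showing that this forces \(\varphi \equiv 0.\)

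The heart of the matter, and the step I expect to be the main obstacle, is this last norm lower bound: a contractive analytic multiplier cannot annihilate the boundary-bound mass of \(e_n\) unless it vanishes in an averaged boundary sense. Writing \(M_j(r) = (2\pi)^{-1}\int_0^{2\pi} |\varphi_j(re^{i\theta})|^2\, d\theta,\) I would compute in polar coordinates that \(\|\varphi_j e_{n_j}\|_2^2 = 2(n_j+1)\int_0^1 r^{2n_j+1} M_j(r)\, dr.\) Using that \(r \mapsto M_j(r)\) is nondecreasing and that \(2(n_j+1) r^{2n_j+1}\, dr\) is a probability density on \([0,1]\) concentrating at \(r = 1,\) I restrict the integral to \(r \ge \rho\) to get \(\|\varphi_j e_{n_j}\|_2^2 \ge M_j(\rho)\,(1 - \rho^{2n_j+2})\) for each fixed \(\rho \in (0,1).\) Letting \(j \to \infty,\) the uniform convergence of \(\varphi_j\) on the circle \(|z| = \rho\) gives \(\liminf_j \|\varphi_j e_{n_j}\|_2^2 \ge \lim_j M_j(\rho),\) and then letting \(\rho \to 1^-\) yields \(\liminf_j \|\varphi_j e_{n_j}\|_2^2 \ge \|\varphi\|_{H^2}^2.\) Since the left-hand side is \(0,\) I conclude \(\|\varphi\|_{H^2}=0,\) hence \(\varphi \equiv 0\) and \(g = \varphi f_0 = 0.\) As the ball \(B\) was arbitrary, this shows that \(H^\infty(\mathbb{D})\) fails to be localizing on \(A^2(\mathbb{D}).\)
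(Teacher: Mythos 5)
Your proof is correct, and its skeleton coincides with the paper's: the same ball, the same perturbing sequence \(f_n = f_0 + \varepsilon e_n\), Montel's theorem to produce \(\varphi\), the bounded convergence theorem to get \(\varphi_j f_0 \to \varphi f_0\) in norm, and bounded point evaluations to identify \(g = \varphi f_0\) and reduce everything to showing that \(\|\varphi_j e_{n_j}\|_2 \to 0\) forces \(\varphi \equiv 0\). Where you genuinely diverge is in this last step, which you rightly call the heart of the matter. The paper argues coefficient by coefficient: Cauchy's integral formula for the \(m\)-th derivative at the origin, combined with Cauchy--Schwarz on the circles \(1/2 \le r < 1\) and integration in \(r\), yields \(|\varphi_j^{(m)}(0)|^2 \,(1 - 4^{-(n_j+1)})/((m!)^2 4^m) \le \|\varphi_j e_{n_j}\|_2^2\), whence \(\varphi^{(m)}(0) = 0\) for every \(m \ge 0\). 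You instead exploit the monotonicity of the integral means \(M_j(r)\) together with the concentration of the radial probability density \(2(n_j+1)r^{2n_j+1}\,dr\) at \(r=1\), obtaining \(\|\varphi_j e_{n_j}\|_2^2 \ge M_j(\rho)\,(1-\rho^{2n_j+2})\) for each fixed \(\rho\), and in the limit \(\liminf_j \|\varphi_j e_{n_j}\|_2^2 \ge \|\varphi\|_{H^2}^2\). Both arguments are sound; the monotonicity of \(r \mapsto M_j(r)\), which you assert without proof, is immediate from Parseval, since \(M_j(r) = \sum_k |a^{(j)}_k|^2 r^{2k}\). Your route is slightly cleaner and quantitatively sharper: it isolates a reusable inequality (the \(A^2\) norm of \(\varphi_j\) against a boundary-concentrating orthonormal sequence dominates, in the limit, the \(H^2\) norm of the locally uniform limit \(\varphi\)), which kills \(\varphi\) in one stroke; the paper's Cauchy-estimate computation buys elementarity, needing nothing beyond Cauchy's formula and direct estimates, at the cost of a more opaque chain of inequalities that only controls \(\varphi\) derivative by derivative at the origin.
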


\begin{proof}
Let \(f_0  \in A^2(\mathbb{D})\) and consider a closed ball \(B =\{f \in A^2(\mathbb{D}): \| f-f_0\|_2  \leq \varepsilon \}.\) We must show that the algebra \(H^\infty(\mathbb{D})\) and the ball \(B\) do not satisfy the condition in the definition of a localizing algebra. First of all, consider the orthonormal basis \((e_n)\) of \(A^2(\mathbb{D})\) formed by the monomials \(e_n(z)=(n+1)^{1/2}z^n\).  Next, consider the sequence \((f_n)\) in \(B\) defined by the expression \(f_n=f_0+ \varepsilon e_n\). Now, suppose that  there is a subsequence \((f_{n_j})\), a function \(g_0  \in A^2(\mathbb{D})\), and a sequence of functions \((\varphi_j)\) in \(H^\infty(\mathbb{D})\) such that \(\|\varphi_j\|_\infty \leq 1\) and \(\|\varphi_jf_{n_j}-g_0\|_2 \rightarrow 0\) as \(j \rightarrow \infty\). Thus, it suffices to show that \(g_0=0\). Then, apply Montel's theorem to extract a further subsequence \((\varphi_{j_k})\) that converges uniformly on compact sets to some \(\varphi \in H^\infty(\mathbb{D})\). The bounded convergence theorem gives  \(\|\varphi_{j_k}f_0-\varphi f_0\|_2 \rightarrow 0\) as \(k \rightarrow \infty\).  Hence, \(\|\varepsilon \varphi_{j_k} e_{n_{j_k}}-(g_0-\varphi f_0)\|_2 \rightarrow 0\) as \(k \rightarrow \infty\). Notice that \(|\varphi_j(z)e_{n_j}(z)| \leq (n_j+1)^{1/2}|z|^{n_j} \rightarrow 0 \text{ as } j \rightarrow \infty\) for each \(z \in \mathbb{D},\)  and this  gives \(g_0=\varphi f_0\) and \(\varphi_{j_k} e_{n_{j_k}} \|_2 \to 0.\)  Since our aim is to prove that \(g_0=0,\) it is enough to show that \(\varphi=0\). Fix an integer \(m \geq 0\) and use Cauchy's integral formula for the derivatives to get, for each  \(1/2 \leq r < 1\) and  each \(j \geq 1,\)  
\[
|\varphi_j^{(m)}(0)| \leq \frac{m!}{2 \pi r^m} \int_0^{2\pi} |\varphi_j(re^{i\theta})| \,d\theta \leq \frac{m!2^m}{(2 \pi)^{1/2}}  \left ( \int_0^{2\pi} |\varphi_j(re^{i\theta})|^2 \,d\theta \right )^{1/2}.
\]
Since \(|e_{n_j}(re^{i\theta})|^2=(n_j+1)r^{2n_j}\), squaring both sides in the above inequality gives 
\[
|\varphi_j^{(m)}(0)|^2 \cdot  \frac{2(n_j+1)r^{2n_j+1}}{(m!)^2 4^m} \leq \frac{1}{\pi} \int_0^{2\pi} |\varphi_j(re^{i\theta})e_{n_j}(re^{i \theta})|^2r \,d\theta.
\]
Now, integrating this inequality over the interval \(1/2 \leq r < 1\) leads to 
\[
|\varphi_j^{(m)}(0)|^2 \cdot  \frac{2(n_j+1)}{(m!)^2 4^m} \int_{1/2}^1 r^{2n_j+1}dr \leq \frac{1}{\pi} \int_{1/2}^1 \int_0^{2\pi} |\varphi_j(re^{i\theta})e_{n_j}(re^{i \theta})|^2r \, d\theta dr,
\]
and from here we obtain the estimate
\[
|\varphi_j^{(m)}(0)|^2 \cdot  \frac{1-1/4^{n_j+1}}{(m!)^2 4^m} \leq \frac{1}{\pi} \int_0^1 \int_0^{2\pi} |\varphi_j(re^{i\theta})e_{n_j}(re^{i \theta})|^2r \, d\theta dr=\|\varphi_j e_{n_j}\|_2^2.
\]
Finally, passing a subsequence \((\varphi_{j_k})\) and taking limits as \(k \rightarrow \infty\) yields
\[
\frac{|\varphi^{(m)}(0)|}{m!2^m} = \lim_{k \rightarrow \infty} |\varphi_{j_k}^{(m)}(0)| \cdot \frac{(1-1/4^{n_{j_k}+1})^{1/2}}{m!2^m}  \leq \lim_{k \rightarrow \infty} \|\varphi_{j_k} e_{n_{j_k}}\|_2=0,
\]
so that \(\varphi^{(m)}(0)=0\) for each \(m \geq 0\), that is, \(\varphi=0,\) as we wanted.
 \end{proof}

\begin{theorem}
\label{hardy}
The algebra \(H^\infty(\mathbb{D})\) of all bounded analytic functions on the unit disc is not localizing on the Hardy space \(H^2(\mathbb{D})\).
\end{theorem}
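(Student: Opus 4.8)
The plan is to follow the template of the proof of Theorem \ref{bergman}: fix an arbitrary closed ball $B=\{f\in H^2(\mathbb D):\|f-f_0\|_2\le\varepsilon\}$ with $0\notin B$, so that $\|f_0\|_2>\varepsilon>0$ and in particular $f_0\ne0$, and produce inside $B$ a sequence $(f_n)$ that cannot be refocused. The first two moves carry over verbatim. I would take $f_n=f_0+\varepsilon u_n$ for a suitable sequence of unit vectors $u_n\to0$ weakly, so that $f_n\to f_0$ weakly and $f_n\in B$. If some subsequence satisfies $\|\varphi_j f_{n_j}-g\|_2\to0$ with $\|\varphi_j\|_\infty\le1$, then Montel's theorem yields a further subsequence with $\varphi_{j_k}\to\varphi$ uniformly on compact subsets of $\mathbb D$ and $\|\varphi\|_\infty\le1$; since point evaluations are bounded on $H^2$, both $f_{n_j}(z)\to f_0(z)$ and $\varphi_{j_k}(z)\to\varphi(z)$ pointwise, and norm convergence forces $g=\varphi f_0$. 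Thus the task reduces, exactly as before, to showing that the uniform-on-compacta limit $\varphi$ must vanish.

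At this point the argument of Theorem \ref{bergman} breaks down, and identifying why is the heart of the matter. In the Bergman space the norm is an integral over the open disc, so uniform-on-compacta convergence upgrades, by dominated convergence, to $\|\varphi_{j_k}f_0-\varphi f_0\|_2\to0$; subtracting, one gets $\|\varphi_{j_k}e_{n_{j_k}}\|_2\to0$, and the Cauchy estimate then kills every Taylor coefficient of $\varphi$. In $H^2$ the norm is a boundary integral and none of this survives: uniform-on-compacta convergence says nothing about boundary values, multiplication by $e_n=z^n$ is an isometry, so $\|\varphi_{j_k}e_{n_{j_k}}\|_2=\|\varphi_{j_k}\|_2$ does not tend to $0$, and in fact the monomial perturbation is genuinely refocusable: for $f_0\equiv1$ one may take $\varphi_n=c/(1+\varepsilon z^n)$, which satisfies $\|\varphi_n\|_\infty\le1$ and $\varphi_n f_n\equiv c\ne0$. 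Consequently the perturbation used on the Bergman space is useless here, and a different $u_n$ must be chosen.

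The remedy I would pursue is to give up on $\|\varphi_j e_{n_j}\|_2\to0$ and instead arrange for $f_n$ to degenerate near the boundary in a way that no bounded multiplier can undo. Concretely I would take $u_n$ to be normalized reproducing kernels $\hat k_{a_n}(z)=\sqrt{1-|a_n|^2}/(1-\overline{a_n}z)$ with $|a_n|\to1$, so that $\|u_n\|_2=1$ while $\|u_n\|_\infty\to\infty$; then $f_n=f_0+\varepsilon u_n$ stays in $B$, tends weakly to $f_0$, and (choosing the sign and the base points suitably) forces $f_n$ to come close to zero on short arcs whose positions $a_n/|a_n|$ I would distribute so as to sweep out the whole circle. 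Feeding this into $g=\varphi f_0$, the aim is to show that the boundary values of $g$ are forced to vanish on a set of full measure—so that $g\equiv0$ and hence $\varphi\equiv0$—by an equidistribution/Borel--Cantelli mechanism modelled on the proof of Theorem \ref{atomic}, now applied to the boundary circle $\partial\mathbb D$ with normalized Lebesgue measure.

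The step I expect to be the main obstacle is precisely this last one: transferring the vanishing from the degeneracies of $f_n$ to the limit $g$. The constraint $f_n\in B$ pins $f_n$ uniformly close to $f_0$ on every compact subset of $\mathbb D$ and confines any actual zeros of $f_n$ to a fixed boundary annulus, so no interior obstruction is available and the vanishing of $g$ must be read off from boundary data alone; moreover the depth of a dip of fixed arc-length cannot be sent to $0$ without driving $f_n$ out of $B$, while dips whose arc-length shrinks can be dodged by an adversarial choice of subsequence. Reconciling these two requirements—producing boundary degeneracies that are simultaneously deep enough to annihilate $g$ and robust enough to survive every subsequence and every bounded multiplier, not merely the naive choice $\varphi_n=g/f_n$—is where the real work lies, and is what distinguishes Theorem \ref{hardy} from its Bergman counterpart.
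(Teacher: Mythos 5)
Your reduction to showing that the Montel limit \(\varphi\) vanishes, and your diagnosis of why the Bergman argument dies on \(H^2\) (multiplication by \(z^n\) is an isometry there, and \(1+\varepsilon z^n\) is refocused by \(\varphi_n=(1-\varepsilon)/(1+\varepsilon z^n)\)), are both correct and match the motivation behind the paper's proof. But the construction you propose does not work, and the gap is larger than the final step you flag. Adding or subtracting \(\varepsilon \hat k_{a_n}\) does not make \(f_n\) small on boundary arcs: near \(a_n/|a_n|\) the kernel is huge, so \(|f_n|\) has a tall \emph{bump} there, and at the boundary points where \(|\varepsilon\hat k_{a_n}|\) passes through the intermediate size needed for cancellation its argument has already rotated by roughly \(\pm\pi/2\), so no dip is created. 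In fact your sequence is provably refocusable: take \(f_0\equiv 1\) and \(f_n=1-\varepsilon\hat k_{a_n}\). A computation with the Mo\"bius image of \(\mathbb{T}\) under \(\hat k_{a_n}\) shows \(|f_n|\ge 1-\varepsilon\) everywhere on \(\mathbb{T}\), and \(f_n\) has exactly one zero \(z_n\in\mathbb{D}\), with \(|z_n|=(1-\varepsilon\sqrt{1-|a_n|^2}\,)/|a_n|\to 1\). Writing \(f_n=B_{z_n}g_n\) with \(B_{z_n}\) the Blaschke factor and \(g_n\) zero-free on a neighbourhood of \(\overline{\mathbb{D}}\), the multiplier \(\varphi_n=(1-\varepsilon)/g_n\) satisfies \(\|\varphi_n\|_\infty\le 1\) and \(\varphi_nf_n=(1-\varepsilon)B_{z_n}\to 1-\varepsilon\neq 0\) in \(H^2\), because \(\|B_{z_n}-1\|_2^2=2(1-|z_n|)\to 0\); the same works for the \(+\) sign. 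So this sequence cannot witness failure of localization. Moreover, even if you did engineer genuine dips, single arcs placed at prescribed points \(a_n/|a_n|\) can be dodged: an adversarial subsequence keeps only the arcs clustering near one boundary point, and no Borel--Cantelli argument is available because such arcs have no independence or mixing property. Finally, the tension you describe at the end is illusory: dips of \emph{fixed} total measure \(\delta\) going all the way down to \(0\) are perfectly compatible with staying in \(B\), by absolute continuity of \(\int|f_0|^2\,d\mu\); the real issue is only robustness under passing to subsequences.

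The idea you are missing is the paper's \emph{multiplicative} perturbation combined with the mixing of the power maps \(z\mapsto z^n\). Lemma \ref{holomorphic} produces \(h\), holomorphic on a neighbourhood of \(\overline{\mathbb{D}}\), with \(h(1)=0\), \(|h|\le 1\), and \(|h-1|\le\delta\) on \(\mathbb{T}\) off a set of measure \(<\delta\); one then puts \(f_n=f_0\cdot h(z^n)\), which lies in \(B\) once \(\delta\) is small. The decisive point is the pointwise bound \(|\varphi_jf_{n_j}|\le|f_{n_j}|=|f_0|\,|h(z^{n_j})|\) on \(\mathbb{T}\), valid for \emph{every} choice of contractive multipliers \(\varphi_j\); so after passing to an a.e.\ convergent subsequence, \(|g|\le|f_0|\cdot\liminf_j|h(z^{n_j})|\) a.e. The dip set of \(f_n\), namely \(e_n^{-1}(\{|h|<1/m\})\), is a union of \(n\) tiny arcs spread around all the \(n\)-th roots of unity, and this is what makes it subsequence-proof: Lemma \ref{uniform} (the Fourier estimate \(\mu(A\cap e_n^{-1}(B))\to\mu(A)\mu(B)\)) yields Lemma \ref{surely}, a Borel--Cantelli-type statement that holds for \emph{every} increasing sequence \((n_j)\) precisely because it replaces independence by mixing. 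Hence for a.e.\ \(z\in\mathbb{T}\) the points \(z^{n_j}\) visit any fixed neighbourhood of \(1\) infinitely often, so \(\liminf_j|h(z^{n_j})|=0\) a.e., and therefore \(g=0\). In short, the subsequence-robust boundary degeneracy you were looking for is obtained by composing with \(z\mapsto z^n\), not by translating bumps or dips along the circle.
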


\noindent
Before we proceed with the proof of  Theorem \ref{hardy}, we  state and prove  several lemmas. We shall denote by \(\mu\) the normalized Haar measure on the torus \(\mathbb{T}=\{z \in \mathbb{C}: |z|=1\}\). Also, we shall denote by \((e_n)\) the orthonormal basis in \(L^2(\mathbb{T})\) of the functions defined by the expression \(e_n(z)=z^n\) for every \(n \in \mathbb{Z}.\) Finally, for every measurable set \(B \subseteq \mathbb{T}\), we shall consider the preimages \(e_n^{-1}(B)=\{z \in \mathbb{T}:z^n \in B\}.\)

\begin{lemma}
\label{uniform}
If \(A,B \subseteq \mathbb{T}\) is any pair of measurable sets then we have
\[
\lim_{n \rightarrow \infty} \mu(A \cap e_n^{-1}(B))=\mu(A) \mu(B).
\]
\end{lemma}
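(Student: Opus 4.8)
The plan is to reformulate the quantity as an integral over $\mathbb{T}$ and then analyze it through Fourier series. First I would write
$$
\mu(A \cap e_n^{-1}(B)) = \int_{\mathbb{T}} \chi_A(z)\,\chi_B(z^n)\,d\mu(z),
$$
recognizing the right-hand side as the $L^2(\mathbb{T})$-pairing of $\chi_A$ with the function $g_n(z) := \chi_B(z^n)$. The whole argument rests on understanding the Fourier expansion of $g_n$.

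The key computation is that, writing $\chi_B(w) = \sum_{k \in \mathbb{Z}} \widehat{\chi_B}(k)\, w^k$ in $L^2$ and substituting $w = z^n$, one gets $g_n(z) = \sum_k \widehat{\chi_B}(k)\, z^{nk}$, so the Fourier coefficients of $g_n$ are $\widehat{g_n}(m) = \widehat{\chi_B}(k)$ when $m = nk$ for some integer $k$, and $\widehat{g_n}(m)=0$ otherwise. (Equivalently, $g_n$ is invariant under rotation by every $n$-th root of unity, which forces its spectrum into the multiples of $n$.) Applying Parseval's identity and using that $\chi_B$ is real-valued, I would obtain the absolutely convergent expansion
$$
\mu(A \cap e_n^{-1}(B)) = \sum_{k \in \mathbb{Z}} \widehat{\chi_A}(nk)\,\overline{\widehat{\chi_B}(k)}.
$$
The term $k=0$ equals $\widehat{\chi_A}(0)\,\overline{\widehat{\chi_B}(0)} = \mu(A)\mu(B)$, which is exactly the claimed limit, so everything reduces to showing that the sum over $k \neq 0$ tends to $0$ as $n \to \infty$.

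The main (though mild) obstacle is that this tail sum is not amenable to a naive term-by-term Riemann–Lebesgue argument, since the number of contributing terms grows with $n$ and there is no uniform absolute convergence. I would instead estimate it with Cauchy–Schwarz:
$$
\Big| \sum_{k \neq 0} \widehat{\chi_A}(nk)\,\overline{\widehat{\chi_B}(k)} \Big| \leq \Big( \sum_{k \neq 0} |\widehat{\chi_A}(nk)|^2 \Big)^{1/2} \Big( \sum_{k \neq 0} |\widehat{\chi_B}(k)|^2 \Big)^{1/2}.
$$
The second factor is bounded by $\|\chi_B\|_2 \leq 1$ via Bessel's inequality. For the first factor, every integer of the form $nk$ with $k \neq 0$ satisfies $|nk| \geq n$, whence
$$
\sum_{k \neq 0} |\widehat{\chi_A}(nk)|^2 \leq \sum_{|m| \geq n} |\widehat{\chi_A}(m)|^2,
$$
which is the tail of the convergent series $\sum_m |\widehat{\chi_A}(m)|^2 = \|\chi_A\|_2^2$ and hence tends to $0$ as $n \to \infty$. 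This forces the $k \neq 0$ contribution to vanish in the limit and yields the stated conclusion.
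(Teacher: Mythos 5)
Your proof is correct and follows essentially the same route as the paper's: identify the Fourier coefficients of $z \mapsto \chi_B(z^n)$ as the coefficients of $\chi_B$ placed on the multiples of $n$, apply Parseval to isolate the $k=0$ term $\mu(A)\mu(B)$, and kill the remaining sum by Cauchy--Schwarz together with the tail estimate $\sum_{|m| \geq n} |\widehat{\chi_A}(m)|^2 \to 0$. The only (immaterial) difference is that you carry the complex conjugate in Parseval explicitly, which is in fact slightly more careful than the paper's write-up.
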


\begin{proof}
First of all,  it is plain that \( \chi_{e_n^{-1}(B)}(z)=\chi_B(z^n)\) for every \(z \in \mathbb{T}\), and from this fact it follows that the Fourier coefficients for the characteristic function of the preimage \(e_n^{-1}(B)\) are given by the expression
\[
\widehat{\chi}_{e_n^{-1}(B)}(m)= \left \{ \begin{array}{ll} \widehat{\chi}_B(m/n), & \text{ if  \(m\) is a multiple of \(n\),}\\
											0,				& \text{ otherwise.} \end{array} \right.							
\]
Next, use Parseval's identity to obtain
\begin{eqnarray*}
\mu(A \cap e_n^{-1}(B)) & = & \int_\mathbb{T} \chi_A(z) \cdot \chi_{e_n^{-1}(B)}(z) \, d\mu (z)\\
& = & \sum_{m 	\in \mathbb{Z}} \widehat{\chi}_A(m) \cdot \widehat{\chi}_{e_n^{-1}(B)}(m) =  \sum_{k	\in \mathbb{Z}} \widehat{\chi}_A(nk) \cdot \widehat{\chi}_{B}(k)\\
& = & \widehat{\chi}_A(0) \cdot  \widehat{\chi}_B(0) +  \sum_{k	\in \mathbb{Z} \backslash \{0\}} \widehat{\chi}_A(nk) \cdot \widehat{\chi}_{B}(k)\\
& = & \mu(A)\mu(B) + \sum_{k	\in \mathbb{Z} \backslash \{0\}} \widehat{\chi}_A(nk) \cdot \widehat{\chi}_{B}(k).
\end{eqnarray*}
Finally, use  the Cauchy-Schwartz inequality to conclude that
\begin{eqnarray*}
|\mu(A \cap e_n^{-1}(B))-\mu(A)\mu(B)| 
& \leq &
\| \chi_B\|_2 \cdot \left ( \sum_{k \in \mathbb{Z} \backslash \{0\}} | \widehat{\chi}_A(nk) |^2 \right )^{1/2}\\
& \leq & \mu(B)^{1/2} \cdot \left ( \sum_{|k| \geq |n|} | \widehat{\chi}_A(k) |^2 \right )^{1/2},
\end{eqnarray*}
and notice that the last expression approaches zero as \(n \rightarrow \infty\), as we wanted.
\end{proof}

\noindent
The following result has the same flavour as the Borel-Cantelli Lemma, which cannot be applied here because the measurable sets under consideration are not necessarily independent.
\begin{lemma}
\label{surely}
Let \(A \subseteq \mathbb{T}\) be a measurable set with \(\mu(A)>0\), let \((n_j)\) be an increasing sequence of positive integers, and let
\(A_j=\{ z \in \mathbb{T}: z^{n_j} \in A\}\). Then we have
\[
\mu \left ( \bigcap_{k=1}^\infty \bigcup_{j=k}^\infty A_j \right ) =1.
\]
\end{lemma}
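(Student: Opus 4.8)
The plan is to show that each tail union \(U_k := \bigcup_{j=k}^\infty A_j\) has full measure, and to deduce the lemma from this. The set in question is precisely \(\bigcap_{k=1}^\infty U_k\), whose complement is the countable union \(\bigcup_{k=1}^\infty U_k^c\); so once we know that every \(U_k\) has measure \(1\), each \(U_k^c\) is null, hence so is their union, and therefore \(\mu\bigl(\bigcap_k U_k\bigr)=1\). In this way the whole statement reduces to the single assertion that \(\mu(U_k)=1\) for each \(k\geq 1\), and it is here that the asymptotic independence furnished by Lemma \ref{uniform} will replace the second Borel--Cantelli lemma, which is unavailable because the sets \(A_j\) need not be independent.

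To prove \(\mu(U_k)=1\), I would look at its complement \(B:=U_k^c\). Since \(B\) is disjoint from every \(A_j\) with \(j\geq k\), we have \(\mu(B\cap A_j)=0\) for all such \(j\). On the other hand, \(A_j=e_{n_j}^{-1}(A)\), so Lemma \ref{uniform}, applied to the pair of sets \(B\) and \(A\) and along the subsequence \(n=n_j\to\infty\), gives \(\mu(B\cap A_j)=\mu(B\cap e_{n_j}^{-1}(A))\to\mu(B)\mu(A)\) as \(j\to\infty\). Passing to the limit in the identity \(\mu(B\cap A_j)=0\) forces \(\mu(B)\mu(A)=0\); since \(\mu(A)>0\) by hypothesis, we conclude \(\mu(B)=0\), that is, \(\mu(U_k)=1\). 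Note that \(\mu(e_{n_j}^{-1}(A))=\mu(A)\) is independent of \(j\), so the measures do not decay, and the only thing preventing a direct appeal to Borel--Cantelli is the possible lack of independence.

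The heart of the argument — and the only nontrivial step — is exactly this substitution of genuine independence by the asymptotic independence of Lemma \ref{uniform}: the fact that \(\mu(B\cap A_j)\) is forced to equal \(0\) for all large \(j\) while it must simultaneously converge to \(\mu(B)\mu(A)\) is enough to kill any positive-measure complement. I expect no further obstacle, the only point requiring mild care being that the same reasoning applies verbatim to every tail \((n_j)_{j\geq k}\), which is automatic since each such tail is again an increasing sequence of positive integers tending to infinity.
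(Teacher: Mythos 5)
Your proof is correct, but it takes a genuinely more direct route than the paper's. Both arguments reduce the lemma, by complementation and countable subadditivity, to a statement about each tail, and both use Lemma \ref{uniform} as the substitute for independence; the difference is in how that lemma is deployed. The paper fixes \(k_0\), works with the finite intersections \(\alpha_k=\mu\bigl(\bigcap_{j=k_0}^k(\mathbb{T}\backslash A_j)\bigr)\), applies Lemma \ref{uniform} to the pair consisting of such a finite intersection and \(\mathbb{T}\backslash A\), obtains \(\alpha\leq\alpha_k\,\mu(\mathbb{T}\backslash A)\) for the limit \(\alpha\) of the \(\alpha_k\), and then lets \(k\to\infty\) to reach the self-referential inequality \(\alpha\leq\alpha\,\mu(\mathbb{T}\backslash A)\), which forces \(\alpha=0\) because \(\mu(\mathbb{T}\backslash A)<1\). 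You instead take the entire infinite complement \(B=U_k^c\) at once, observe that it is \emph{exactly} disjoint from every \(A_j\) with \(j\geq k\) (so \(\mu(B\cap A_j)=0\) identically, not merely asymptotically), and apply Lemma \ref{uniform} a single time to the pair \((B,A)\) along the subsequence \(n_j\to\infty\), concluding \(\mu(B)\mu(A)=0\) and hence \(\mu(B)=0\) since \(\mu(A)>0\). This eliminates the paper's double limiting process: one application of asymptotic independence per tail, with the exact disjointness doing the work that the nested-intersection bookkeeping does in the paper. The two points needing care --- that a subsequence inherits the limit guaranteed by Lemma \ref{uniform}, and that every tail of \((n_j)\) still tends to infinity --- you have flagged, and both are automatic. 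Your argument is, if anything, the cleaner of the two.
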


\begin{proof}
Taking complements, the above statement is equivalent to saying that
\[
\mu \left ( \bigcup_{k=1}^\infty \bigcap_{j=k}^\infty (\mathbb{T} \backslash A_j ) \right ) =0.
\]
Hence, it suffices to show for every \(k \geq 1\) that
\[
\mu \left ( \bigcap_{j=k}^\infty (\mathbb{T} \backslash A_j ) \right ) =0.
\]
Fix \(k_0 \geq 1\),  and for each \(k \geq k_0\),  consider the quantity
\[
\alpha_k = \mu  \left ( \bigcap_{j=k_0}^k (\mathbb{T} \backslash A_j ) \right ).
\]
Then, \((\alpha_k)\) is a decreasing sequence of nonnegative numbers. Set  \(\alpha = \lim \alpha_k\). We must show that \(\alpha=0\). Fix \(k \geq k_0\) and notice that, for each \(l \geq k\),
\[
\bigcap_{j=k_0}^l (\mathbb{T} \backslash A_j )  \subseteq  \left (  \bigcap_{j=k_0}^k (\mathbb{T} \backslash A_j )\right ) \cap (\mathbb{T} \backslash A_l).
\]
We have \(\mathbb{T} \backslash A_l = e_{n_l}^{-1}(\mathbb{T} \backslash A)\), so that Lemma \ref{uniform} can be applied to obtain
\begin{eqnarray*}
\alpha & = & \lim_{l \rightarrow \infty} \mu \left ( \bigcap_{j=k_0}^l (\mathbb{T} \backslash A_j ) \right ) 
\leq  \lim_{l \rightarrow \infty} \mu  \left [ \left (  \bigcap_{j=k_0}^k (\mathbb{T} \backslash A_j )\right ) \cap (\mathbb{T} \backslash A_l) \right ]\\
& = & \mu \left (  \bigcap_{j=k_0}^k (\mathbb{T} \backslash A_j )\right ) \cdot \mu (\mathbb{T} \backslash A)= \alpha_k \cdot \mu (\mathbb{T} \backslash A).
\end{eqnarray*}
Finally, taking limits as \(k \rightarrow \infty\) leads to the inequality  \(\alpha \leq \alpha \cdot  \mu (\mathbb{T} \backslash A)\), and since \(\mu(\mathbb{T} \backslash A) < 1\), we conclude that \(\alpha=0\), as we wanted.
\end{proof}

\begin{lemma}
\label{holomorphic}
If \(0 < \delta < 1\) then there is an open set \(U \supseteq \overline{\mathbb{D}}\) and there is a holomorphic function \(h \in H(U)\) such that
\begin{enumerate}
\item \(h(1)=0\),
\item \(|h(z)| \leq 1\) for each \(z \in \overline{\mathbb{D}}\),
\item \(\mu (\{z \in \mathbb{T}: |h(z)-1| > \delta \}) < \delta\).
\end{enumerate}
\end{lemma}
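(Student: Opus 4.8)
The plan is to produce $h$ by an explicit formula rather than by an abstract approximation argument, since the three requirements pull against one another and a concrete candidate lets us verify all of them at once. The guiding picture is that $h$ should vanish at $z=1$ yet be uniformly close to the constant $1$ on all of $\mathbb{T}$ except for a tiny arc around $z=1$; the width of that arc is precisely what we spend to satisfy condition (3).

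First I would isolate the obstacle. The obvious candidate, a ``dip'' of the form $1-P(z)^N$ built from the peaking function $P(z)=(1+z)/2$, does give $h(1)=0$ and tends to $1$ away from $z=1$, but it violates (2): because $P$ maps $\overline{\mathbb{D}}$ into the disc $\overline{D(1/2,1/2)}$ of center $1/2$ and radius $1/2$, the powers $P^N$ wind in argument for $z\in\mathbb{T}$ near $1$, and $|1-P^N|$ climbs toward $2$. So the genuine difficulty is enforcing $|h|\le 1$ simultaneously with $h\approx 1$; this rules out naive powers and forces a map that controls the modulus.

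The step that resolves this is to take the M\"obius function $h(z)=(z-1)/(z-1-\epsilon)$ for a small parameter $\epsilon>0$. Its only pole is at $1+\epsilon>1$, so it is holomorphic on $U=D(0,R)$ for any $1<R<1+\epsilon$, and (1) is immediate from $h(1)=0$. For (2) I would compute on $\mathbb{T}$ that $|z-1-\epsilon|^2=(1+\epsilon)|z-1|^2+\epsilon^2$, so that $|h(z)|^2=|z-1|^2/\bigl((1+\epsilon)|z-1|^2+\epsilon^2\bigr)\le 1$; the bound then propagates to all of $\overline{\mathbb{D}}$ by the maximum modulus principle.

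For (3) the key identity is $h(z)-1=\epsilon/(z-1-\epsilon)$, so that $|h(z)-1|>\delta$ holds exactly when $|z-(1+\epsilon)|<\epsilon/\delta$. Hence the bad set is the intersection with $\mathbb{T}$ of a disc of radius $\epsilon/\delta$ about $1+\epsilon$; by the triangle inequality it is contained in the arc $\{z\in\mathbb{T}:|z-1|<\epsilon(1+\delta)/\delta\}$, whose $\mu$-measure tends to $0$ as $\epsilon\to 0$, and choosing $\epsilon$ small enough makes it $<\delta$. I expect no serious obstacle once the right function is identified; the only points needing care are the modulus computation on $\mathbb{T}$ and the (routine) estimate of the arc length.
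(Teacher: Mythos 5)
Your proposal is correct and is essentially the paper's own proof: your function \(h(z)=\dfrac{z-1}{z-1-\epsilon}=\dfrac{1-z}{(1+\epsilon)-z}\) is exactly the paper's M\"obius function \(\dfrac{r(1-z)}{1-rz}\) under the reparametrization \(r=1/(1+\epsilon)\). The only differences are in the verification, and both of yours are sound: you check \(|h|\le 1\) via the (correct) identity \(|z-1-\epsilon|^2=(1+\epsilon)|z-1|^2+\epsilon^2\) on \(\mathbb{T}\) plus the maximum modulus principle, where the paper uses the image of \(\overline{\mathbb{D}}\) under the M\"obius map, and you identify the bad set exactly as \(\{z\in\mathbb{T}:|z-(1+\epsilon)|<\epsilon/\delta\}\), where the paper instead uses a compactness argument to choose \(r\) close to \(1\).
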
 

\begin{proof}
Let \(0 < r < 1\) to be chosen later on, and consider the function \(h\) defined by the expression
\[
h(z)=\frac{r(1-z)}{1-rz}.
\]
It is plain that \(h\) is holomorphic on  \(\mathbb{C} \backslash \{1/r\} \supseteq \overline{\mathbb{D}}\) and that \(h(1)=0\). Since \(h\) is a Moebius transformation, it is easy to check that 
\(h(\overline{\mathbb{D}})\) is a disc of radius \(r/(1+r)\) centered at \(r/(1+r)\). It follows that \(|h(z)| \leq 1 \) for each \( z \in \overline{\mathbb{D}}\). Now, consider the arc \(A=\{e^{i\theta}: |\theta| < \pi \delta/2\}\) and notice that \(\mu(A)=\delta/2\). Thus, it suffices to show that \(\{z \in \mathbb{T}: |h(z)-1| > \delta\} \subseteq A\) for a suitable choice of  \(r.\) Consider the compact set \(K=\{rz: r \in [0,1], \, z \in \mathbb{T}\backslash A\}\). Since \(1 \notin K\), there is an \(\eta > 0\)  such that \(|1-rz| \geq \eta\) for every \(r \in [0,1]\) and for every \(z \in \mathbb{T} \backslash A\). Thus, for each \(z \in \mathbb{T} \backslash A\) we have
\[
|h(z)-1| =\frac{1-r}{|1-rz |} \leq \frac{1-r}{\eta} < \delta,
\]
as long as \(r\) is chosen to be close enough to 1.
\end{proof}

\begin{proof}[Proof of Theorem \ref{hardy}]
Let  \(f_0 \in H^2(\mathbb{D})\) and consider the closed ball  \(B=\{f \in H^2(\mathbb{D}): \|f-f_0\| \leq \varepsilon\}\). We must show that the algebra \(H^\infty(\mathbb{D})\) and the ball \(B\) do not satisfy the conditions in the definition. Let \(\delta>0\) to be chosen later on,  let \(h\) be a holomorphic function as in Lemma \ref{holomorphic}, and define a sequence of functions \((f_n)\) in  \(H^2(\mathbb{D})\) by the expression
\[
f_n(z)=f_0(z)h(z^n), \qquad z \in \mathbb{D}, \quad n \geq 1.
\]
We claim that \(f_n \in B\) for each \(n \geq 1\), provided that \(\delta>0\) is suitably chosen. Indeed, consider the  measurable sets 
\[
A=\{z \in \mathbb{T}: |h(z)-1|>\delta\} \quad \text{and} \quad A_n=\{z \in \mathbb{T}: |h(z^n)-1|>\delta\},
\]
and notice that \(A_n=e_n^{-1}(A)\), so that \(\mu(A_n)=\mu(A)<\delta\). Then we get
\begin{eqnarray*}
\|f_n-f_0\|_2^2 & = & \int_\mathbb{T} |h(z^n)-1|^2 \cdot |f_0(z)|^2 \,d\mu(z)\\
& \leq & \int_{\mathbb{T} \backslash A_n} \delta^2 |f_0(z)|^2d\mu(z) + \int_{A_n} 4 |f_0(z)|^2\, d\mu(z)\\
& \leq & \delta^2 \|f_0\|_2^2 + \int_{A_n} 4 |f_0(z)|^2 \,d\mu(z).
\end{eqnarray*}
Now, choose \(\delta > 0\) such that \(\delta^2 \|f_0\|_2^2 < \varepsilon^2/2\), and with the property that, for each measurable set \(B \subseteq \mathbb{T}\), the condition \(\mu(B) < \delta\)  implies that
\[
\int_B4|f_0(z)|^2 \,d\mu(z) < \varepsilon^2/2.
\]
Hence, \(\|f_n-f_0\|_2^2 < \varepsilon^2\), and the proof of our claim is over. Next, suppose that there is a subsequence \((f_{n_j})\), a sequence \((\varphi_j)\) in \(H^\infty(\mathbb{D})\), and a function \(g \in H^2(\mathbb{D})\) such that \( \|g-\varphi_jf_{n_j}\|_2 \rightarrow 0\) as \(j \rightarrow \infty\). Then, it suffices to show that \(g(z)=0\) for almost every \(z \in  \mathbb{T}\). We may assume, extracting a subsequence if necessary, that \(\varphi_j(z)f_{n_j}(z) \rightarrow g(z)\) as \(j \rightarrow \infty\) for almost every \(z \in \mathbb{T}\). Thus, there is a measurable set \(N_0\ \subseteq \mathbb{T}\) such that \(\mu(N_0)=0\) and such that, for every \(z \in \mathbb{T} \backslash N_0\), we have
\begin{eqnarray*}
|g(z)| & = & \liminf_{j \rightarrow \infty} |\varphi_j(z)| \cdot |f_{n_j}(z)| \\
& \leq & \liminf_{j \rightarrow \infty}  |f_{n_j}(z)| \\
& = & |f_0(z)| \cdot \liminf_{j \rightarrow \infty} |h(z^{n_j})|.
\end{eqnarray*}
Since \(h\) is continuous at \(z=1\), for every integer \(m \geq 1\) there is an open set \(G_m \subseteq \mathbb{T}\) such that \(1 \in G_m\) and \(|h(z)|<1/m\) for each \(z \in G_m\). Now, apply Lemma \ref{surely} to get a measurable set \(N_m \subseteq \mathbb{T}\) with \(\mu(N_m)=0\) and  such that \(z^{n_j} \in G_m\) infinitely often for each \(z \in \mathbb{T} \backslash N_m\). Therefore, \(\liminf |h(z^{n_j}| \leq 1/m\) as \(j \rightarrow \infty\) for  each \(z \in \mathbb{T} \backslash N_m\). Finally, consider the countable union of measurable sets
\[
N=\bigcup_{m=0}^\infty N_m,
\]
and notice that \(\mu(N)=0\). If \(z \in \mathbb{T} \backslash N\) then \(|g(z)| \leq  |f_0(z)|/m\) for every integer \(m \geq 1.\) We  conclude from this inequality that \(g(z)=0\) for every \(z \in \mathbb{T} \backslash N,\) as we wanted.
\end{proof}
\section{Algebras generated by diagonal operators}
\label{diag}
\noindent
Now we turn our attention to the algebra generated by a single normal operator \(T.\) The spectral theorem ensures that  there is measure \(\mu\) of compact support on the Borel subsets of the complex plane such that  \(T\) is unitarily equivalent to a multiplication by a bounded measurable function on \(L^2(\mu).\) Then the algebra generated by \(T\) may be regarded as a subalgebra of \(L^\infty(\mu),\) and in view of Theorem \ref{atomic}, if such an algebra is localizing then the measure \(\mu\) must have an atom. Now we focus on the  extreme case that \(\mu\) is a purely atomic measure, so that \(T\) is a diagonal operator.

Let \((z_j)\) be a sequence of complex numbers in the closed  unit disc. Consider the diagonal operator \(T={\rm diag}(z_j),\)  that is, \(Te_j=z_je_j,\) where \((e_j)\) is an orthonormal basis of an infinite dimensional, separable complex Hilbert space \(H\) and \(j\) runs through the non negative integers. Suppose that  \(z_j \neq z_k\) whenever \(j \neq k.\) Then, let \({\mathcal R}=\{p(T): p \text{ is a polynomial}\}\) denote the unital algebra generated by \(T.\)  In this section, some conditions  are given for the algebra \({\mathcal R}\) to be localizing.
\begin{proposition}
\label{ism}
If \(\,|z_{j_0}|=1\) for some \(j_0 \geq 0\) then \(\,\overline{{\rm ball}({\mathcal R})}^\sigma\) contains a rank one operator.
\end{proposition}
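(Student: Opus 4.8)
The plan is to realize the rank one orthogonal projection $P_{j_0}$ onto $e_{j_0}$ (that is, $P_{j_0}f = \langle f, e_{j_0}\rangle\, e_{j_0}$) as a weak operator limit of a sequence drawn from $\mathrm{ball}(\mathcal R)$. Every operator in $\mathcal R$ is diagonal with respect to $(e_j)$, namely $p(T)=\mathrm{diag}(p(z_j))$, and $\|p(T)\|=\sup_j|p(z_j)|$. Hence it suffices to produce polynomials $p_n$ with $\sup_j|p_n(z_j)|\le 1$ such that $p_n(z_{j_0})\to 1$ and $p_n(z_j)\to 0$ for every $j\neq j_0$. Indeed, writing $f=\sum_j f_j e_j$, one has
\[
\|p_n(T)f-P_{j_0}f\|_2^2 = |p_n(z_{j_0})-1|^2|f_{j_0}|^2+\sum_{j\neq j_0}|p_n(z_j)|^2|f_j|^2,
\]
and since each summand tends to $0$ while the whole expression is dominated by $4\sum_j|f_j|^2<\infty$, the dominated convergence theorem gives $p_n(T)\to P_{j_0}$ strongly, and therefore in the weak operator topology.

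The key step, and the only place the hypothesis $|z_{j_0}|=1$ enters, is the construction of a peak polynomial at the boundary point $w:=z_{j_0}$. I would take the degree one polynomial
\[
\phi(z)=\frac{1+\bar w z}{2}.
\]
Then $\phi(w)=\tfrac{1}{2}(1+|w|^2)=1$, while for $z\in\overline{\mathbb D}$ we have $|\phi(z)|\le\tfrac12(1+|z|)\le 1$; moreover equality throughout forces $\bar w z=|z|$ and $|z|=1$, that is $z=w$. Thus $\phi$ peaks at $w$: it attains the value $1$ there and satisfies $|\phi(z)|<1$ strictly for every $z\in\overline{\mathbb D}\setminus\{w\}$.

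Finally I would set $p_n=\phi^n$. Since $\sup_{|z|\le 1}|\phi(z)|\le 1$ we get $\sup_j|p_n(z_j)|\le 1$, so $p_n(T)\in\mathrm{ball}(\mathcal R)$. Furthermore $p_n(z_{j_0})=\phi(w)^n=1$ for all $n$, and for $j\neq j_0$ the points $z_j$ are distinct from $w$ and lie in $\overline{\mathbb D}$, whence $|\phi(z_j)|<1$ and $p_n(z_j)=\phi(z_j)^n\to 0$. Plugging this into the convergence principle of the first paragraph yields $p_n(T)\to P_{j_0}$, so $P_{j_0}\in\overline{\mathrm{ball}(\mathcal R)}^\sigma$ is the desired rank one operator. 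I expect the only genuine difficulty to be locating the peak function $\phi$; the distinctness of the $z_j$ together with the boundary position of $z_{j_0}$ are exactly what allow $\phi^n$ to separate $z_{j_0}$ from all remaining points while staying bounded by $1$, after which everything is routine.
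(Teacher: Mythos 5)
Your proposal is correct and takes essentially the same route as the paper: the paper uses exactly the same polynomials \(p_n(z)=\left(\frac{\overline{z}_{j_0}z+1}{2}\right)^n\) and concludes that \(p_n(T)\) converges in the weak operator topology to the rank one operator \(e_{j_0}\otimes e_{j_0}\). Your dominated-convergence argument merely makes explicit the convergence step that the paper leaves to the reader, so there is nothing to add.
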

\begin{proof}
Consider the sequence of polynomials \((p_n)\) defined by the expression 
\[
p_n(z):=\left ( \frac{\overline{z}_{j_0} z+1}{2} \right )^n.
\]
Then \(\|p_n\|_\infty \leq 1,\) so that \(p_n(T) \in {\rm ball}({\mathcal R}).\) Moreover, \((p_n)\) converges pointwise to  the function \(f\) defined by \(f(z)=0\) if \(z \neq z_{j_0}\) and \(f(z_{j_0})=1.\) Therefore, the sequence of operators \((p_n(T))\) converges in the weak operator topology to the rank one operator \(e_{j_0} \otimes e_{j_0}.\)
\end{proof}
\noindent
Recall that the spectrum of \(T\) is the compact set 
\[
\sigma(T)=\overline{\{z_j: j \geq 0\}}.
\]
\begin{proposition}
If \(\sigma(T)\) has empty interior and \({\mathbb C} \backslash \sigma (T)\) is connected, then \( \overline{{\rm ball} ({\mathcal R})}^\sigma \) is the set of  all diagonal operators of the form \({\rm diag}(\lambda_j),\) for some \((\lambda_j) \in \ell_\infty\) with  \(\|(\lambda_j)\|_\infty \leq 1.\) In particular, \( \overline{{\rm ball} ({\mathcal R})}^\sigma \) contains a nonzero compact operator.
\end{proposition}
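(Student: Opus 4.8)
The plan is to identify the set $\overline{{\rm ball}({\mathcal R})}^\sigma$ completely by proving two inclusions, the nontrivial one resting on a uniform polynomial approximation theorem. First I would dispose of the easy inclusion. Since $p(T)={\rm diag}(p(z_j))$ and $\|p(T)\|=\sup_j|p(z_j)|=\sup_{z\in\sigma(T)}|p(z)|$, every element of ${\rm ball}({\mathcal R})$ is a diagonal contraction. The set of all diagonal operators is closed in the weak operator topology, because the off-diagonal matrix entries $T\mapsto\langle Te_j,e_k\rangle$ are $\sigma$-continuous and vanish in the limit; intersecting with the $\sigma$-closed unit ball of ${\mathcal B}(H)$ shows that the set of diagonal contractions is $\sigma$-closed. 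As it contains ${\rm ball}({\mathcal R})$, it contains $\overline{{\rm ball}({\mathcal R})}^\sigma$.

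The heart of the matter is the reverse inclusion: every diagonal contraction $D={\rm diag}(\lambda_j)$ with $\|(\lambda_j)\|_\infty\leq 1$ lies in $\overline{{\rm ball}({\mathcal R})}^\sigma$. I would reduce this to the existence of a \emph{sequence} of polynomials $(p_n)$ with $\sup_{\sigma(T)}|p_n|\leq 1$ and $p_n(z_j)\to\lambda_j$ for each fixed $j$. Indeed, such a sequence gives $p_n(T)={\rm diag}(p_n(z_j))$, a uniformly bounded family of diagonal operators converging entrywise, hence converging to $D$ in the weak operator topology; being a $\sigma$-limit of a sequence from ${\rm ball}({\mathcal R})$, the operator $D$ lies in the closure. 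This is where the two hypotheses enter. Writing $K:=\sigma(T)$, which is compact, I would invoke the fact that $K$ is nowhere dense (empty interior) and has connected complement to apply Lavrentiev's theorem, the relevant case of Mergelyan's theorem, which asserts that under exactly these conditions the polynomials are uniformly dense in $C(K)$.

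To construct the $p_n$, fix $N$ and a small $\delta>0$. Since $z_0,\dots,z_N$ are distinct points of $K$, Urysohn's lemma produces continuous $\phi_0,\dots,\phi_N\colon K\to[0,1]$ with $\phi_j(z_j)=1$ and pairwise disjoint supports, so that $g=\sum_{j\leq N}\lambda_j\phi_j\in C(K)$ satisfies $g(z_j)=\lambda_j$ and $\|g\|_\infty\leq 1$ (at each point at most one $\phi_j$ is nonzero). Lavrentiev's theorem then yields a polynomial $q$ with $\|q-g\|_K<\delta$, and the rescaling $p:=q/(1+\delta)$ gives $\|p\|_K\leq 1$ together with $|p(z_j)-\lambda_j|\leq 2\delta$ for $j\leq N$. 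Taking $N=n$ and $\delta=1/n$ produces the required sequence. The one point needing care, and the main obstacle, is verifying the hypotheses of Lavrentiev's theorem and then arranging the rescaling so that the sup-norm bound $\leq 1$ is \emph{exactly} preserved while the finite interpolation error still tends to $0$; everything else is routine. Finally, for the ``in particular'' claim I would simply observe that $e_0\otimes e_0={\rm diag}(1,0,0,\dots)$ is a nonzero rank-one, hence compact, diagonal contraction, so it belongs to $\overline{{\rm ball}({\mathcal R})}^\sigma$.
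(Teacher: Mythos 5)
Your proof is correct and follows essentially the same route as the paper: interpolate the first finitely many diagonal values by a continuous function of sup-norm at most one on \(\sigma(T)\), invoke Mergelyan's theorem (in your formulation, the Lavrentiev case for a compact set with empty interior and connected complement), and let the resulting uniformly bounded polynomial sequence \((p_n(T))\) converge to \({\rm diag}(\lambda_j)\) in the weak (the paper uses the strong) operator topology. The extra details you supply --- the \(\sigma\)-closedness of the set of diagonal contractions for the easy inclusion, the rescaling \(q/(1+\delta)\) that keeps the sup-norm bound while preserving the interpolation error, and the rank-one operator \(e_0 \otimes e_0\) for the final claim --- are just explicit versions of steps the paper leaves implicit.
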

\begin{proof} We prove the non trivial inclusion. Let \((\lambda_j) \in \ell_\infty\) with \(\|(\lambda_j)\|_\infty \leq 1,\) and  for every \(n \geq 1,\) choose a continuous function \(f_n : \sigma(T) \to \mathbb{C}\) with \(f_n(z_j)=\lambda_j\) whenever   \(1 \leq j \leq n\)  and   \(|f(z)| \leq 1\) for all \(z \in \sigma(T).\) It follows from Mergelyan's theorem that  for every \(n \geq 1\) there is a polynomial \(p_n(z)\) such that \(|p_n(z)| \leq 1\) for each \(z \in \sigma(T)\) and such that \(|p_n(z_j)-\lambda_j| < 1/n\) whenever \(1 \leq j \leq n.\) Finally, the sequence of diagonal operators \((p_n(T))\) lies inside \({\rm ball}({\mathcal R})\) and it converges to the diagonal operator \({\rm diag}(\lambda_j)\) in the strong operator topology.
\end{proof}
\noindent
The rest of this section deals with diagonal operators \(T\) with the property that \(\sigma(T) \supseteq \partial {\mathbb D}.\) We make this  assumption because it allows us to control the norm of an operator in the algebra generated by \(T.\) Indeed, if \(p\) is a polynomial then it follows from the maximum modulus principle that 
\[
\|p(T)\|= \sup\{|p(z)| \colon z \in \sigma(T)\}= \sup \{|p(z)| \colon z \in \mathbb{D}\}= \|p\|_\infty.
\]
Notice that Proposition \ref{ism} allows us to discard the case  \(\,|z_{j_0}|=1\) for some \(j_0 \geq 0,\) so that from now on we shall assume  \(|z_j| < 1\) for all \(j \geq 0.\)
\begin{proposition} The closure of the unit ball of \({\mathcal R}\)  in the weak operator topology  is the set of   all diagonal operators of the form  \(\,{\rm diag}(f(z_j)),\) where \(f \in H^\infty({\mathbb D})\) and  \(\|f\|_\infty \leq 1.\)
\end{proposition}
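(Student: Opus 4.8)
The plan is to prove the two inclusions separately, following the pattern of the lemma in Example \ref{nsa}. For the easy direction I would first note that every $p(T)$ is the diagonal operator $\mathrm{diag}(p(z_j))$, and since the set $\mathcal D$ of all diagonal operators is closed in the weak operator topology, the whole closure $\overline{\mathrm{ball}(\mathcal R)}^\sigma$ consists of diagonal operators $\mathrm{diag}(\lambda_j)$ with $\|(\lambda_j)\|_\infty\le 1$ (the unit ball of $\mathcal B(H)$ being $\sigma$-closed). The crucial observation is that, under the identification $\mathcal D\cong\ell_\infty$, the weak operator topology on the unit ball of $\mathcal D$ coincides with the weak-$\ast$ topology $\sigma(\ell_\infty,\ell_1)$, whose predual $\ell_1$ is separable; hence this ball is weak-$\ast$ metrizable and its closure can be computed with sequences.

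Given $S=\mathrm{diag}(\lambda_j)\in\overline{\mathrm{ball}(\mathcal R)}^\sigma$, I would then pick a sequence of polynomials $(p_n)$ with $\|p_n\|_\infty\le 1$ and $p_n(T)\to S$ in the weak operator topology, which forces $p_n(z_j)\to\lambda_j$ for every $j$. Because $\|p_n\|_\infty\le 1$ on $\mathbb D$, Montel's theorem yields a subsequence $(p_{n_k})$ converging uniformly on compact subsets of $\mathbb D$ to some $f\in H^\infty(\mathbb D)$ with $\|f\|_\infty\le 1$; evaluating at each $z_j\in\mathbb D$ gives $\lambda_j=f(z_j)$, so $S=\mathrm{diag}(f(z_j))$. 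This settles the inclusion $\overline{\mathrm{ball}(\mathcal R)}^\sigma\subseteq\{\mathrm{diag}(f(z_j)):f\in\mathrm{ball}(H^\infty(\mathbb D))\}$.

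For the reverse inclusion, I would take $f\in H^\infty(\mathbb D)$ with $\|f\|_\infty\le 1$ and set $f_r(z)=f(rz)$ for $0<r<1$. Each $f_r$ is holomorphic on a neighbourhood of $\overline{\mathbb D}$ with $\sup_{\overline{\mathbb D}}|f_r|\le\|f\|_\infty\le1$, so it can be approximated uniformly on $\overline{\mathbb D}$ by polynomials of norm at most $1$ (for instance by the Fej\'er means of its boundary function, which are contractive and, being analytic, are genuine polynomials). Using the norm identity $\|q(T)\|=\|q\|_\infty$, these approximants give $\mathrm{diag}(f_r(z_j))\in\overline{\mathrm{ball}(\mathcal R)}^{\|\cdot\|}\subseteq\overline{\mathrm{ball}(\mathcal R)}^\sigma$. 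Finally, since $f_r(z_j)=f(rz_j)\to f(z_j)$ as $r\to1^-$ for each $j$ and the operators $\mathrm{diag}(f_r(z_j))$ are uniformly bounded by $1$, a standard $\varepsilon/3$ argument gives $\mathrm{diag}(f_r(z_j))\to\mathrm{diag}(f(z_j))$ in the strong operator topology along a sequence $r\to1^-$; as $\overline{\mathrm{ball}(\mathcal R)}^\sigma$ is weakly closed, the limit $\mathrm{diag}(f(z_j))$ belongs to it.

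The Montel extraction and the dilation estimate are routine; the only genuinely delicate points are justifying that sequences suffice (the weak-$\ast$ metrizability of the diagonal ball) and keeping the approximating polynomials inside the unit ball. The latter is exactly where the standing hypothesis $\sigma(T)\supseteq\partial\mathbb D$ — through the norm identity $\|p(T)\|=\|p\|_\infty$ — does the essential work, so I expect that control of the norm to be the main thing to handle carefully.
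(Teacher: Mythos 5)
Your proof is correct, and the forward inclusion is essentially the paper's own argument: reduce to sequences by metrizability, use the norm identity \(\|p(T)\|=\|p\|_\infty\) coming from \(\sigma(T)\supseteq\partial\mathbb{D}\), apply Montel's theorem, and identify the diagonal entries of the limit as \(f(z_j)\). (Your metrizability justification, via the identification of the diagonal unit ball with \(\bigl(\mathrm{ball}(\ell_\infty),\sigma(\ell_\infty,\ell_1)\bigr)\), is a self-contained substitute for the paper's appeal to metrizability of the weak operator topology on bounded subsets of \(\mathcal{B}(H)\) when \(H\) is separable; both are fine.) Where you genuinely diverge is the reverse inclusion. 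The paper does it in one step: it takes the Fej\'er means \(p_n=F_n\ast f\), \(p_n(z)=\sum_{j=0}^n\bigl(1-\tfrac{j}{n+1}\bigr)\hat f(j)z^j\), which satisfy \(\|p_n\|_\infty\le\|f\|_\infty\le 1\) and converge to \(f\) uniformly on compacta, so that \(p_n(T)\to\mathrm{diag}(f(z_j))\) directly in the weak operator topology. You instead run the two-step scheme of the lemma in Example \ref{nsa}: dilate to \(f_r\), approximate \(f_r\) uniformly on \(\overline{\mathbb{D}}\) by contractive polynomials, conclude that \(\mathrm{diag}(f_r(z_j))\) lies in the norm closure of \(\mathrm{ball}(\mathcal{R})\), and then pass to the limit \(r\to1^-\) in the strong operator topology. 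Your route costs an extra limiting step but is slightly more elementary: it needs Fej\'er's theorem only for functions continuous (indeed holomorphic) across \(\overline{\mathbb{D}}\), whereas the paper's one-step approximation uses contractivity and local uniform convergence of Fej\'er means for a general \(H^\infty\) function, which implicitly rests on its \(L^\infty\) boundary values. Both arguments hinge on the same two pillars --- the norm identity and a contractive polynomial approximation --- so the difference is one of packaging rather than substance.
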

\begin{proof}
First, let \(R \in \overline{{\rm ball}({\mathcal R})}^\sigma.\)  Since \(H\) is separable, the weak operator topology is metrizable on bounded subsets of \({\mathcal B}(H),\) and therefore, there exists a sequence of polynomials \((p_n)\) such that \(\|p_n(T)\| \leq 1\) and  \(p_n(T) \rightarrow R\) in the weak operator topology. Now,  \(p_n(T)\) is a diagonal operator with diagonal sequence \((p_n(z_j))\) so that \(\|p_n\|_\infty = \|p_n(T)\| \leq 1.\) Then, it follows from Montel's theorem that there is a subsequence \((p_{n_k})\) that converges uniformly on compact subsets of \({\mathbb D}\) to some function \(f \in H^\infty({\mathbb D})\) with \(\|f\|_\infty \leq 1.\) Therefore, 
\[
\langle Re_j,e_l \rangle = \lim_{k \rightarrow \infty} \langle p_{n_k}(T)e_j,e_l \rangle = \lim_{k \rightarrow \infty} \langle p_{n_k}(z_j)  e_j, e_l \rangle =\langle f(z_j) e_j, e_l \rangle.
\] 
Thus, \(R={\rm diag}(f(z_j)),\) as we wanted.  Next, let \(f \in H^\infty({\mathbb D})\) with \(\|f\|_\infty \leq 1,\) and let  \(R={\rm diag}(f(z_j)).\) Then,  there is a sequence of polynomials \((p_n)\) such that \(\|p_n\|_\infty \leq 1\) and \(p_n \to f\) uniformly on compact subsets of \({\mathbb D}.\) We can take for instance the sequence of polynomials \(p_n=F_n \ast f,\) where \((F_n)\) is the sequence of the  Fej\'er kernels, that is,
\[
p_n(z)= \sum_{j=0}^n \left (1-\frac{j}{n+1} \right ) \hat{f} (j)z^j.
\]
Thus,  \(\|p_n(T) \|\leq 1\) and for every \(j,k \geq 0\) we have 
\[
\lim_{n \to \infty} \langle p_n(T)e_j,e_k \rangle = \lim_{n \rightarrow \infty}\langle p_{n}(z_j)  e_j, e_k \rangle =\langle f(z_j) e_j, e_k \rangle=\langle Re_j,e_k \rangle.
\]
This shows that \(p_n(T) \to R\) in the weak operator topology, so that \(R \in \overline{{\rm ball}({\mathcal R})}^\sigma, \) as we wanted. 
\end{proof}
\begin{corollary} The  following conditions are equivalent:
\begin{enumerate}
\item \(\overline{{\rm ball}({\mathcal R})}^\sigma\) contains a non zero compact operator,
\item  there exists \(f \in H^\infty({\mathbb D})\) such that \(\|f\|_\infty \leq1,\) with \(f(z_j) \neq 0\) for some \(j \geq 0\) and \(\displaystyle{\lim_{j \rightarrow \infty}f(z_j)=0.}\)
%\item  \(\forall \varepsilon >0\) there exists \(f \in H^\infty({\mathbb D})\) such that \(\|f\|_\infty \leq1,\) with \(|f(0)| \geq 1/2,\) and  \(\displaystyle{\limsup_{j \rightarrow \infty}|f(z_j)| < \varepsilon.}\)
\end{enumerate}
\end{corollary}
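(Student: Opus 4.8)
The plan is to reduce everything to the characterization of \(\overline{{\rm ball}({\mathcal R})}^\sigma\) obtained in the preceding Proposition, combined with the classical fact that a diagonal operator is compact precisely when its diagonal sequence tends to zero. By that Proposition, every operator in \(\overline{{\rm ball}({\mathcal R})}^\sigma\) has the form \({\rm diag}(f(z_j))\) for some \(f \in H^\infty({\mathbb D})\) with \(\|f\|_\infty \leq 1\), and conversely every such diagonal operator belongs to \(\overline{{\rm ball}({\mathcal R})}^\sigma\). Thus both conditions in the statement translate into purely diagonal statements about the sequence \((f(z_j))\), and the corollary becomes a matter of matching up ``compact and nonzero'' with ``tends to zero but is not identically zero.''

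First I would record the elementary lemma that a diagonal operator \({\rm diag}(\lambda_j)\) is compact if and only if \(\lambda_j \to 0\). For sufficiency one approximates in operator norm by the finite-rank truncations \(\sum_{j \leq N} \lambda_j \, e_j \otimes e_j\), since the norm of the remainder is exactly \(\sup_{j > N} |\lambda_j|\), which tends to zero. For necessity one argues by contraposition: if \(|\lambda_{j_k}| \geq \varepsilon\) along some subsequence, then the bounded orthonormal sequence \((e_{j_k})\) has images \(\lambda_{j_k} e_{j_k}\) that are pairwise orthogonal of norm at least \(\varepsilon\), hence admit no norm-convergent subsequence, contradicting compactness. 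I would also note the trivial fact that \({\rm diag}(\lambda_j)\) is nonzero exactly when some \(\lambda_j \neq 0\).

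With this lemma in hand, the two implications are immediate. For \((1) \Rightarrow (2)\), take a nonzero compact operator \(K \in \overline{{\rm ball}({\mathcal R})}^\sigma\), write \(K = {\rm diag}(f(z_j))\) via the Proposition, and read off that \(f(z_j) \to 0\) by compactness while \(f(z_k) \neq 0\) for some \(k\) by nonzeroness; this is precisely condition \((2)\). For \((2) \Rightarrow (1)\), start from an \(f\) as in condition \((2)\), invoke the Proposition to place \({\rm diag}(f(z_j))\) inside \(\overline{{\rm ball}({\mathcal R})}^\sigma\), and apply the lemma once more: the hypothesis \(f(z_j) \to 0\) forces compactness and the hypothesis \(f(z_k) \neq 0\) forces nonzeroness.

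I do not anticipate a genuine obstacle, since the substantive analytic work was already carried out in the preceding Proposition; the only point requiring any care is the compactness criterion for diagonal operators, and even that is standard. The one subtlety worth flagging is that condition \((2)\) carries the normalization \(\|f\|_\infty \leq 1\) built in, so \({\rm diag}(f(z_j))\) really does land in the closure of the \emph{unit ball}, matching the hypothesis of the Proposition rather than merely lying in \(\overline{{\mathcal R}}^\sigma\).
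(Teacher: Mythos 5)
Your proof is correct and is exactly the argument the paper intends: the corollary is stated without proof precisely because it follows immediately from the preceding Proposition's identification of \(\overline{{\rm ball}({\mathcal R})}^\sigma\) with the diagonal operators \({\rm diag}(f(z_j))\), \(f \in {\rm ball}(H^\infty(\mathbb{D}))\), together with the standard fact that a diagonal operator is compact if and only if its diagonal sequence tends to zero. Your explicit verification of that compactness criterion and of the normalization \(\|f\|_\infty \leq 1\) fills in the details the paper leaves to the reader, with no deviation in approach.
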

\noindent
Consider the set \(\sigma(T)^\prime\) of all cluster points of the spectrum of \(T.\) The meaning of the following result is that when the part of \(\sigma(T)^\prime\) in the open unit disc is large enough, the algebra \({\mathcal R}\) fails to be localizing.

\begin{proposition} Suppose that   there is a sequence \((w_p)\) of distinct points in \(\sigma(T)^\prime \cap {\mathbb D}\) such that 
\[
\sum_{p=1}^\infty (1-|w_p|)=\infty.
\]
Then \({\mathcal R}\) fails to be a localizing algebra.
\end{proposition}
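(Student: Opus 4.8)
The plan is to prove that \emph{no} closed ball can serve as a witness for the localizing property. So I would fix an arbitrary closed ball \(B=\{x\in H:\|x-x_0\|\le\varepsilon\}\) with \(0\notin B\) and construct a single sequence \((x_n)\) in \(B\) that defeats every admissible subsequence and every admissible choice of operators. The guiding idea is that the candidate operators in \({\rm ball}({\mathcal R})\) are polynomials \(\varphi_j(T)\) with \(\|\varphi_j\|_\infty=\|\varphi_j(T)\|\le1\) (using \(\sigma(T)\supseteq\partial\mathbb D\) and the maximum modulus principle), so Montel's theorem makes available a limiting multiplier \(\varphi\in{\rm ball}(H^\infty(\mathbb D))\); I would force this \(\varphi\) to vanish at every cluster point \(w_p\). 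Since \(\sum_p(1-|w_p|)=\infty\), a bounded analytic function vanishing at all the \(w_p\) is identically zero, and this collapses the limit vector to \(0\).

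For the construction I would first fix positive weights \((a_p)\) with \(\sum_p a_p^2\le\varepsilon^2\). For each \(n\ge1\) and each \(p\) with \(1\le p\le n\), the point \(w_p\in\sigma(T)'\cap\mathbb D\) is a cluster point of \(\{z_j\}\), so I can choose an index \(k(n,p)\), keeping all the \(k(n,p)\) pairwise distinct, with \(|z_{k(n,p)}-w_p|<1/n\); then I set \(x_n=x_0+\sum_{p=1}^{n}a_p\,e_{k(n,p)}\). Each \(x_n\) lies in \(B\), since the perturbation has norm at most \((\sum_p a_p^2)^{1/2}\le\varepsilon\). The key design feature is that \(x_n\) carries a bump near \emph{every} one of \(w_1,\dots,w_n\); this way an arbitrary subsequence still meets a bump near each fixed \(w_p\), which is what lets the vanishing condition \(\varphi(w_p)=0\) survive the passage to a subsequence.

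Now I would assume, toward a contradiction, that \(\varphi_j(T)x_{n_j}\to y\ne0\) for some subsequence and some \((\varphi_j)\) with \(\|\varphi_j(T)\|\le1\), and pass via Montel to a further subsequence with \(\varphi_j\to\varphi\) uniformly on compact subsets of \(\mathbb D\). Writing \(\varphi_j(T)x_{n_j}=\varphi_j(T)x_0+u_j\) with \(u_j=\sum_{p\le n_j}a_p\varphi_j(z_{k(n_j,p)})e_{k(n_j,p)}\), the bounded convergence theorem gives \(\varphi_j(T)x_0\to\varphi(T)x_0\) in norm, since the diagonal entries \(\varphi_j(z_l)\) converge pointwise and are bounded by \(1\). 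Hence \((u_j)\) converges in norm; but the \(u_j\) are pairwise orthogonal, because the indices \(k(n,p)\) were chosen all distinct, so the limit must be \(0\). This forces simultaneously \(y=\varphi(T)x_0\) and \(\|u_j\|^2=\sum_{p\le n_j}a_p^2|\varphi_j(z_{k(n_j,p)})|^2\to0\). For each fixed \(p\) the latter gives \(\varphi_j(z_{k(n_j,p)})\to0\), while \(z_{k(n_j,p)}\to w_p\) together with the uniform convergence \(\varphi_j\to\varphi\) near \(w_p\) gives \(\varphi_j(z_{k(n_j,p)})\to\varphi(w_p)\); hence \(\varphi(w_p)=0\) for every \(p\), so \(\varphi\equiv0\) and \(y=\varphi(T)x_0=0\), a contradiction.

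The main obstacle is the design of the sequence rather than any individual estimate. The bumps must be simultaneously small enough in \(\ell^2\) to keep each \(x_n\) inside \(B\), indexed by pairwise distinct coordinates so that their images \(u_j\) have disjoint supports and are therefore forced to vanish in any norm limit, and yet distributed near \emph{every} cluster point within each single vector, so that the condition \(\varphi(w_p)=0\) can be read off for all \(p\) no matter which subsequence is selected. Reconciling the fact that the bumps are killed in the limit with the fact that they must nonetheless pin down the value of \(\varphi\) at each \(w_p\) is the delicate point; once the construction is arranged, the only analytic inputs are Montel's theorem, the bounded convergence theorem, and the Blaschke uniqueness principle.
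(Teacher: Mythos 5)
Your proposal is correct and follows essentially the same route as the paper's own proof: perturb the center of the ball by small bumps \(a_p e_{k(n,p)}\) at coordinates whose diagonal entries approximate the cluster points \(w_p\), use Montel's theorem to extract a limit \(\varphi \in H^\infty(\mathbb{D})\), show the bump contribution of the images vanishes in norm, deduce \(\varphi(w_p)=0\) for all \(p\), and invoke the divergent Blaschke sum to get \(\varphi \equiv 0\) and hence a zero limit vector. The only differences are cosmetic: you use finite partial sums of bumps and handle a general center \(x_0\) by dominated convergence, where the paper uses infinite bump sums and reduces to a finitely supported \(x_0\); and you justify the vanishing of the bump images by pairwise orthogonality (disjoint supports), where the paper phrases the same fact as weak convergence to zero.
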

\begin{proof}
We proceed by contradiction. Suppose  \({\mathcal R}\) is  a localizing algebra and let \(B=\{x \in H: \|x-x_0\| \leq \varepsilon\}\) be a ball as in the definition. We may assume without loss of generality that \(x_0 \in H\) has finite support, say \({\rm supp}(x_0) \subseteq [0,M].\)  Then, for every \(p \geq 1\) there is a subsequence \((z_{j_{p,q}})\) such that \(\lim_{q \to \infty} z_{j_{p,q}}=w_p\) for all \(p \geq 1.\) Moreover,  the indices \(j_{p,q}\) can be chosen in such a way that \(j_{p,q} > M\) for all \(p,q \geq 1\) and \(j_{p,q} \neq j_{s,t}\) if \((p,q) \neq (s,t).\) Now, let \((\alpha_p)\) be a sequence of positive real numbers such that \(\sum_{p=1}^\infty \alpha_p^2 < \varepsilon^2,\) and consider the sequence of vectors \((x_q)\) in the ball \(B\)  defined by \(x_q:=x_0+y_q,\) where
\[
y_q:= \sum_{p=1}^\infty \alpha_p e_{j_{p,q}}.
\]
Notice that \(x_0 \perp y_q,\) because \({\rm supp}(x_0) \subseteq [0,M]\) and \(j_{p,q} >M.\) Since \({\mathcal R}\) is a localizing algebra, there is a sequence of polynomials \((f_k)\) such that \(\|f_k(T)\| \leq 1,\) and there is a subsequence \((x_{q_k})\) such that \((f_k(T)x_{q_k})\) converges in norm to some vector \(y \neq 0.\)  Since \(\|f_k\|_\infty \leq 1,\) using Montel's theorem we may assume by extracting a subsequence if necessary that \((f_k)\) converges uniformly on  compact sets  to some function \(f \in H^\infty({\mathbb D}).\) Consider the diagonal operator \(f(T):={\rm diag}(f(z_j)).\) Since the vector  \(x_0\) has finite support, the sequence \((f_k(T)x_0)\) converges in norm to \(f(T)x_0.\) Thus, the sequence \((f_k(T)y_{q_k})\)   converges in norm. Notice that \(y_q \to 0\) weakly.  Hence, \(f_k(T)y_{q_k} \to 0\) weakly, and we may conclude that \(\|f_k(T)y_{q_k})\| \to 0.\) Therefore, we have \(y=f(T)x_0.\) Finally, it follows from the bounded convergence theorem that 
\[
\sum_{p=1}^\infty \alpha_p^2|f(w_p)|^2  =  \lim_{k \to \infty} \sum_{p=1}^\infty \alpha_p^2 |f_k(z_{j_{p,q_k}})|^2 = \lim_{k \to \infty} \|f_k(T)y_{q_k}\|^2=0,
\]
and from this  identity we get  \(f(w_p)=0\) for all \(p \geq 1.\) Finally, the condition \(\sum_{p=1}^\infty (1-|w_p|)=\infty\) implies \(f (z)=0\) for all \(z \in \mathbb{D}.\) Hence, \(y=0,\) and the contradiction has arrived.
\end{proof}
We finish this section with a statement of Problem \ref{genprob} for the special case of the algebra \({\mathcal R}\) generated by a single diagonal operator. 
\begin{problem}
{\em Let \({\mathcal R}\) be the algebra generated by a single diagonal operator on an infinite dimensional, separable complex Hilbert space. Suppose that \({\mathcal R}\) is localizing. Does \(\overline{{\rm ball}({\mathcal R})}^\sigma\) contain a rank one operator, or at least, a nonzero compact operator?}
\end{problem}
\section{Extended eigenvalues and invariant subspaces}
\label{eigen}
\noindent
The first author \cite{lacruz} obtained a simple proof of Theorem \ref{quasi} that is reminiscent of Hilden's proof of a special case of the Lomonosov original result \cite{lomonosov} and  that can be adapted  to prove Theorem \ref{main}.

\begin{proof}[Proof of Theorem \ref{main}] We proceed by contradiction. Assume the commutant \(\{T\}^\prime\) is a transitive algebra. Since \(\ker T\) is invariant under  \(\{T\}^\prime\) and  since \(T \neq 0,\) we must have \(\ker T=\{0\},\) so that \(T\) is injective. Then, let \(B \subseteq E\) be a closed ball  that makes  a localizing subspace out of \({\mathcal X}.\)  We claim that there is  some constant  \(c>0\) such that for every \(x \in B\) there is an  \(X \in {\mathcal X}\) such that \(\|X\| \leq c\) and \(TXx \in B.\) Otherwise, for every \(n \in {\mathbb N}\) there is an \(x_n \in B\) such that the condition \(X \in {\mathcal X}\) and \(TXx_n \in B\) implies \(\|X\| >n.\) Since \({\mathcal X}\) is localizing, there is a subsequence \((x_{n_j})\) and there is a sequence \((X_j)\) in \({\mathcal X}\) with \(\|X_j\| \leq 1,\) and such that  \((X_jx_{n_j})\) converges in norm to some nonzero vector \(x \in E.\) Therefore,  \((TX_jx_{n_j})\) converges in norm to \(Tx.\)  Since \(T\) is injective, we have \(Tx \neq 0.\) Since \(\{T\}^\prime\) is transitive, there is an \(R \in \{T\}^\prime\)  such that \(RTx \in {\rm int}\,B.\) Hence, there is some \(j_0 \geq 1\) such that \(RTX_j x_{n_j} \in B\) for all \(j \geq j_0.\) Since \(RT=TR,\) we have \(TRX_j x_{n_j} \in B\) for all \(j \geq j_0.\) Since  \( RX_j \in  {\mathcal X},\)   the choice of the sequence \((x_n)\) implies  \(\| RX_j\| > n_j\)  for all \(j \geq j_0.\) Finally, this leads to a contradiction, because \(\|  RX_j\| \leq  \|R\|\) for all \(j \geq 1.\) This completes the proof of our claim.

Start with a vector \(x_0 \in B\) and choose an operator \(X_1 \in {\mathcal X}\) such that \(\|X_1 \| \leq c\) and \( TX_1x_0 \in B.\) Now choose another operator  \(X_2 \in {\mathcal X}\) such that \(\|X_2\| \leq c\) and \(TX_2TX_1x_0 \in B.\) Continue this ping pong game to obtain a sequence of vectors \(x_n \in  B\) and a sequence of operators \((X_n)\) in \({\mathcal X}\) such that \(\|X_n\| \leq c\) and such that
\[
x_n = TX_n \cdots TX_1x_0 = \lambda^{n(n+1)/2}\, X_n \cdots X_1T ^nx_0.
\]
Then, let \(d=\min \{\|x\| \colon x \in B\}.\) It is plain that \(d >0\) because \(0 \notin B.\)  Assume \(|\lambda| \leq 1.\) We get 
\[
d \leq \|x_n\| \leq c^n |\lambda|^{n(n+1)/2} \cdot  \|T^n \| \cdot \| x_0\|,
\]
and this gives information on the spectral radius of \(T,\) namely,
\[
r(T)= \lim_{n \to \infty} \|T^n\|^{1/n} \geq \lim_{n \to \infty}  \frac{1}{c |\lambda|^{(n+1)/2}}.
\]
If \(|\lambda| < 1\) then  we get \(r(T)=\infty,\) and if \(|\lambda|=1\) and \(T\) is quasinilpotent then we get \(r(T) \geq 1/c.\) In both cases we obtain a contradiction.
Finally, assume \(|\lambda|>1.\) Notice that
\[
x_n = TX_n \cdots TX_1x_0 = \lambda^{-n(n-1)/2} \, T^n X_n \cdots X_1  x_0.
\]
From this identity we get
\[
d \leq \|x_n\| \leq c^n  |\lambda|^{-n(n-1)/2} \cdot  \|T^n \| \cdot \| x_0\|,
\]
and once again this gives information on the spectral radius of \(T,\) namely,
\[
r(T)= \lim_{n \to \infty} \|T^n\|^{1/n} \geq \lim_{n \to \infty}  \frac{|\lambda|^{(n-1)/2}}{c }=\infty.
\]
A contradiction has arrived.
\end{proof}
The following is an example of an operator \(T\) on a Banach space \(E\) such that the family of all extended eigenoperators associated with some extended eigenvalue of \(T\) is a localizing subspace of \({\mathcal B}(E)\) although it does not contain any nonzero compact operators.

\vskip1em
\noindent
\begin{example}
{\em Let \(E=C[0,1]\) be the Banach space of  continuous functions on the unit interval endowed with the supremum norm. Then, consider the position operator \(M_t \in {\mathcal B}(E)\) defined as \((M_tf)(t)=tf(t).\)  We shall show that the set of extended eigenvalues of the position operator \(M_t\) is  the interval \((0,\infty),\)  and moreover, the extended eigenoperators associated with such extended eigenvalues belong to the class of weighted composition operators.}  
\end{example}
\begin{lemma}
\label{polynomial}
Let \(\lambda \in {\mathbb C}\) be an extended eigenvalue of  \(M_t\)  and let \(X \in {\mathcal B}(E)\) be an extended eigenoperator associated with \(\lambda.\) Then \(\lambda \neq 0\) and there is a nonzero function \(\varphi \in C[0,1]\) so that for every polynomial \(p,\)
 \[
 (Xp)(t)=\varphi(t)p(t/\lambda).
 \]
\end{lemma}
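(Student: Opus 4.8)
The plan is to extract a recursion from the intertwining relation \(M_t X = \lambda X M_t\) by testing it against the monomials. Writing \(p_n(s) = s^n\), so that \(M_t p_n = p_{n+1}\), the defining identity applied to \(p_n\) gives \(M_t(X p_n) = \lambda X(M_t p_n) = \lambda X p_{n+1}\), which reads pointwise as \(t\,(Xp_n)(t) = \lambda\,(Xp_{n+1})(t)\) for every \(t \in [0,1]\). Everything will follow from iterating this single identity, once the degenerate case \(\lambda = 0\) is excluded.

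First I would dispose of the case \(\lambda = 0\). If \(\lambda = 0\) then the relation forces \(M_t X = 0\), so \(t\,(Xf)(t) = 0\) for all \(f \in C[0,1]\) and all \(t\); hence \((Xf)(t) = 0\) for \(t \neq 0\), and by continuity \(Xf = 0\) for every \(f\), contradicting \(X \neq 0\). Therefore \(\lambda \neq 0\), and the displayed identity can be rewritten as the recursion \((Xp_{n+1})(t) = (t/\lambda)\,(Xp_n)(t)\).

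Next I would set \(\varphi = Xp_0\), the image of the constant function \(p_0 \equiv 1\); this belongs to \(C[0,1]\) since \(X\) maps into \(C[0,1]\). An immediate induction on \(n\) using the recursion yields \((Xp_n)(t) = \varphi(t)\,(t/\lambda)^n = \varphi(t)\,p_n(t/\lambda)\). By linearity of \(X\) this extends to \((Xp)(t) = \varphi(t)\,p(t/\lambda)\) for every polynomial \(p\), which is the asserted formula; note that \(t \mapsto p(t/\lambda)\) is a genuine element of \(C[0,1]\) because \(p\) is entire.

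Finally, to see that \(\varphi \neq 0\), observe that if \(\varphi\) vanished identically then the formula would give \(Xp = 0\) for every polynomial \(p\); since the polynomials are dense in \(C[0,1]\) by the Weierstrass theorem and \(X\) is bounded, this would force \(X = 0\), contrary to hypothesis. The argument is essentially a direct computation, and I do not anticipate a substantial obstacle: the only points requiring care are the correct pointwise reading of both sides of \(M_t X = \lambda X M_t\) and the separate treatment of \(\lambda = 0\).
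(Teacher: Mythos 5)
Your proposal is correct and follows essentially the same route as the paper's proof: iterate the relation \(M_tX=\lambda XM_t\) on the monomials, set \(\varphi = X1\), extend by linearity, and invoke the Weierstrass theorem to rule out \(\varphi \equiv 0\). The only cosmetic difference is that the paper dismisses \(\lambda = 0\) by citing injectivity of \(M_t\), whereas you unfold that injectivity argument explicitly (vanishing for \(t\neq 0\) plus continuity), which amounts to the same thing.
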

\begin{proof}
We have \(M_tX= \lambda XM_t.\) Notice that \(\lambda \neq 0\) because otherwise \(M_tX=0,\) and since \(M_t\) is injective, it follows that \(X=0.\)  Then we have \(XM_t^n =\lambda^{-n} M_t^n X,\) and setting \(\varphi =X1\) we get  
\(Xt^n = \varphi(t) (t/\lambda)^n.\) Hence, the desired identity follows by linearity. Notice that the function \(\varphi\) does not vanish identically, for otherwise \(Xp=0\) for every polynomial \(p,\) and it follows from the Weierstrass theorem that \(X=0.\)
\end{proof}
\begin{lemma}
\label{interval1}
If \(\lambda \in {\mathbb C}\) is an extended eigenvalue of \(M_t\) then \(\lambda \in (0,\infty).\)
\end{lemma}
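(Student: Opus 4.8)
The plan is to exploit the boundedness of the extended eigenoperator $X$ together with the explicit form provided by Lemma \ref{polynomial}. By that lemma we already know that $\lambda\neq 0$ and that there is a nonzero $\varphi\in C[0,1]$ with $(Xp)(t)=\varphi(t)\,p(t/\lambda)$ for every polynomial $p$. Since $X\in\mathcal B(E)$, for every polynomial $p$ we have the estimate
\[
\sup_{t\in[0,1]}|\varphi(t)|\,|p(t/\lambda)|=\|Xp\|_\infty\le\|X\|\,\|p\|_\infty=\|X\|\sup_{s\in[0,1]}|p(s)|.
\]
Everything will follow from reading this inequality against a well-chosen family of polynomials.

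The key step is the classical fact that for any point $w\in\mathbb C\setminus[0,1]$ the point evaluation $p\mapsto p(w)$ is unbounded on the space of polynomials normed by the supremum over $[0,1]$. Concretely, I would take the Chebyshev polynomials $T_n$ and set $q_n(z)=T_n(2z-1)$, so that $\|q_n\|_\infty=1$ on $[0,1]$ while $|q_n(w)|=|T_n(2w-1)|\to\infty$ because $2w-1\notin[-1,1]$. Feeding $p=q_n$ into the displayed inequality, I conclude that if some $t_0\in[0,1]$ satisfied $\varphi(t_0)\neq 0$ and $t_0/\lambda\notin[0,1]$, then the left-hand side would be at least $|\varphi(t_0)|\,|q_n(t_0/\lambda)|\to\infty$ while the right-hand side stays equal to $\|X\|$, a contradiction. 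Hence $t/\lambda\in[0,1]$ for every $t\in[0,1]$ with $\varphi(t)\neq 0$.

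To finish, I note that $\varphi$ is continuous and not identically zero, so $\{t\in[0,1]:\varphi(t)\neq 0\}$ is a nonempty relatively open subset of $[0,1]$; since a singleton is not relatively open, this set must contain some $t_0>0$. For this $t_0$ the previous paragraph gives $t_0/\lambda\in[0,1]$, and since $t_0\neq 0$ and $\lambda\neq 0$ we in fact have $t_0/\lambda\in(0,1]$, a strictly positive real number. Writing $\lambda=t_0/(t_0/\lambda)$ as a quotient of two positive reals then shows that $\lambda$ is real and positive, that is, $\lambda\in(0,\infty)$.

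I expect the main obstacle to be the sublemma about point evaluations: one must make sure the construction of polynomials that are bounded on $[0,1]$ yet blow up at an arbitrary exterior point $w$ covers all $w\notin[0,1]$ simultaneously (negative real, genuinely complex, and those with $|w|\le 1$ alike), which is exactly what the Chebyshev--Joukowski estimate $|T_n(\tfrac12(u+u^{-1}))|=\tfrac12|u^n+u^{-n}|$ with $|u|>1$ delivers. The remaining bookkeeping, namely selecting $t_0>0$ in the support of $\varphi$ and reading off realness and positivity, is routine.
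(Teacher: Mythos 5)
Your proof is correct, and its skeleton is essentially the paper's: both arguments start from Lemma \ref{polynomial}, select a point \(t_0>0\) with \(\varphi(t_0)\neq 0\) (you get it because the nonvanishing set of \(\varphi\) is a nonempty relatively open subset of \([0,1]\); the paper phrases the same point as a continuity/without-loss-of-generality remark), and then contradict the boundedness of \(X\) by feeding it polynomials of supremum norm at most \(1\) on \([0,1]\) that blow up at \(t_0/\lambda\). The one genuine difference is how those polynomials are produced: the paper invokes Runge's theorem to get, for each \(n\), a polynomial \(p_n\) with \(\|p_n\|_\infty\le 1\) on \([0,1]\) and \(|p_n(t_0/\lambda)|\ge n\), whereas you construct them explicitly as \(q_n(z)=T_n(2z-1)\) with \(T_n\) the Chebyshev polynomials, using the Joukowski parametrization \(w=\tfrac12(u+u^{-1})\), \(|u|>1\), to see that \(|T_n(w)|\to\infty\) for every \(w\notin[-1,1]\). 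Your version is more elementary and self-contained (no appeal to an approximation theorem, and it yields geometric growth of the point evaluations), at the cost of carrying the classical Chebyshev estimate; the paper's is shorter once Runge is taken for granted. A second, minor structural difference: the paper assumes \(\lambda\notin[0,\infty)\) and checks that this forces \(t_0/\lambda\notin[0,1]\) (which requires separating the cases of negative real and nonreal \(\lambda\)), while you prove directly that \(t_0/\lambda\in(0,1]\) and conclude \(\lambda=t_0/(t_0/\lambda)\in(0,\infty)\), which dispenses with that case check. Both routes are valid and complete.
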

\begin{proof}
We know  that \(\lambda \neq 0\) and there is a nonzero function \(\varphi \in C[0,1]\) such that for every polynomial \(p,\) \((Xp)(t)=\varphi(t)p(t/\lambda).\) Let \(t_0 \in [0,1]\) such that \(\varphi(t_0) \neq 0.\) Since \(\varphi\) is continuous, we may assume without loss of generality that \(t_0 \neq 0.\) We proceed by contradiction. If \(\lambda \notin [0,\infty)\) then \(t_0/ \lambda \notin [0,1]\) and  it follows from Runge's theorem that for every \(n \geq 1\) there is a polynomial \(p_n\) such that \(|p_n(t)| \leq 1\) for every \(t \in [0,1]\) and such that \(|p_n(t_0/\lambda)| \geq n.\) Hence, \(\|p_n\|_\infty \leq 1\) but \(\|Xp_n\|_\infty \geq  |\varphi(t_0)| n,\) and this is a contradiction, because \(X\) is a bounded operator.
\end{proof}
\begin{lemma}
\label{interval2}
If  \(\lambda \in (0,\infty)\) then \(\lambda\) is an extended eigenvalue of \(M_t,\) and moreover,    if   \(\varphi \in C[0,1]\) is some nonzero function  such that  \(\varphi(t)=0\) for all \(t \in [0,1] \cap (\lambda,\infty),\)  then the operator  \(X\) defined by
\[
(Xf)(t)= \left \{ \begin{array}{rl} \varphi(t)f(t/\lambda), & \text{if  } t \in [0,1] \cap [0,\lambda],\\
								0, & \text{if }t \in [0,1] \cap (\lambda,\infty),
		\end{array} \right. \qquad (\ast)
\]
is an extended eigenoperator of \(M_t\) associated with \(\lambda.\) Conversely, if \(X\) is an extended eigenoperator of \(M_t\) associated with \(\lambda\) then there is some nonzero function \(\varphi \in C[0,1]\)  such that  \(\varphi(t)=0\) for every \(t \in [0,1] \cap (\lambda,\infty)\) and such that    \(X\) is given by  the above expression.
\end{lemma}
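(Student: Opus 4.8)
The plan is to treat the two implications separately, relying on the polynomial representation from Lemma \ref{polynomial} and reusing the Runge-theorem argument from the proof of Lemma \ref{interval1}. To see that \(\lambda\) is genuinely an extended eigenvalue, one first notes that a nonzero \(\varphi \in C[0,1]\) vanishing on \([0,1] \cap (\lambda,\infty)\) always exists: when \(\lambda \geq 1\) that set is empty, and when \(\lambda < 1\) one may take any nonzero continuous function vanishing on \([\lambda,1]\). Hence the substance of the lemma is to verify that the operator \(X\) given by \((\ast)\) is a bounded extended eigenoperator, and conversely that every extended eigenoperator has this shape.

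For the direct implication I would first check that \(Xf \in C[0,1]\) for every \(f\). The only delicate point arises when \(\lambda < 1\), where continuity must be checked at the junction \(t=\lambda\); here I would observe that since \(\varphi\) is continuous and vanishes on \((\lambda,1]\) it also vanishes at \(\lambda\), so both branches of \((\ast)\) agree and equal zero at \(t=\lambda\) (while for \(t \in [0,\lambda]\) one has \(t/\lambda \in [0,1]\), so \(f(t/\lambda)\) is defined). Boundedness is immediate from \(|(Xf)(t)| \leq \|\varphi\|_\infty \|f\|_\infty\), giving \(\|X\| \leq \|\varphi\|_\infty\). The intertwining relation \(M_tX = \lambda XM_t\) I would verify by a pointwise computation: on \([0,1]\cap[0,\lambda]\) one has \((XM_tf)(t) = \varphi(t)(t/\lambda)f(t/\lambda)\), so \(\lambda(XM_tf)(t) = t\varphi(t)f(t/\lambda) = (M_tXf)(t)\), whereas on \([0,1]\cap(\lambda,\infty)\) both sides vanish. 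Since \(X1 = \varphi \neq 0\), this \(X\) is a nonzero extended eigenoperator, so \(\lambda\) is an extended eigenvalue.

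For the converse, let \(X\) be an extended eigenoperator for \(\lambda\) and set \(\varphi = X1\); by Lemma \ref{polynomial} we have \((Xp)(t) = \varphi(t)p(t/\lambda)\) for every polynomial \(p\). The key step is to show that \(\varphi\) vanishes on \([0,1]\cap(\lambda,\infty)\). When \(\lambda \geq 1\) this set is empty; when \(\lambda < 1\) and \(t \in (\lambda,1]\) we have \(t/\lambda > 1\), so by Runge's theorem there are polynomials \(p_n\) with \(\|p_n\|_\infty \leq 1\) on \([0,1]\) and \(|p_n(t/\lambda)| \geq n\), whence \(|(Xp_n)(t)| = |\varphi(t)|\,|p_n(t/\lambda)| \geq n|\varphi(t)|\), and boundedness of \(X\) forces \(\varphi(t)=0\). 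Once this vanishing is in hand, the operator \(Y\) defined by \((\ast)\) is, by the direct implication, a bounded operator that agrees with \(X\) on every polynomial. Since the polynomials are dense in \(C[0,1]\) by the Weierstrass theorem and both \(X\) and \(Y\) are continuous, I conclude \(X = Y\), which is precisely the asserted representation.

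The main obstacle, such as it is, is the boundary behaviour: the continuity of \(Xf\) at \(t=\lambda\) in the direct implication, and the vanishing of \(\varphi = X1\) on \((\lambda,1]\) in the converse. Both express the single fact that an extended eigenoperator cannot detect values of \(f\) outside \([0,1]\) — enforced by continuity of \(\varphi\) in the first case and by Runge's theorem together with boundedness in the second. Everything else reduces to the routine density-and-continuity argument.
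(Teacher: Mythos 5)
Your proof is correct and follows essentially the same route as the paper's: a pointwise verification of the intertwining relation together with continuity at \(t=\lambda\) for the direct half, and then Lemma \ref{polynomial}, the vanishing of \(\varphi\) on \((\lambda,1]\), and Weierstrass density for the converse. The only cosmetic difference is that where you invoke Runge's theorem to force \(\varphi(t_0)=0\) for \(t_0\in(\lambda,1]\), the paper argues more directly with the monomials \(t^n\), noting that \(\|Xt^n\|_\infty \geq |\varphi(t_0)|(t_0/\lambda)^n \to \infty\) while \(\|t^n\|_\infty = 1\).
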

\begin{proof} Let us suppose that an operator \(X\) is given by the  expression  \((\ast).\) Notice that \(X\) is well defined since \(t/\lambda \in [0,1]\)  for all \(t \in [0,1] \cap [0,\lambda],\) and \(Xf\) is  continuous  since \(\varphi(\lambda)=0\)  in the case \(\lambda \in (0,1).\) Also, it is clear that \(X\) is linear and bounded, with \(\|X\| \leq \|\varphi\|_\infty.\)  Then, for every \(f \in C[0,1],\) we have
\[
 (M_tXf)(t)   =  t \varphi(t)f (t/\lambda)= \lambda \varphi(t) (t/\lambda) f(t/\lambda)= \lambda (XM_t f)(t)
 \]
 if \(t \in [0,1] \cap [0,\lambda]\) and \((M_tXf)(t) =0=(XM_t f)(t)\) if \(t \in [0,1] \cap (\lambda,\infty),\) so that  \(\lambda\) is an extended eigenvalue of \(M_t\) and \(X\) is an extended eigenoperator  associated with \(\lambda.\) Conversely, if \(X\) is an extended eigenoperator of \(M_t\) associated with \(\lambda\) then it follows from Lemma \ref{polynomial} that  there is some nonzero function \(\varphi \in C[0,1]\) such  that \((Xp)(t)=\varphi(t)p(t/\lambda)\)  for every polynomial \(p.\) We  need to show that, when \(\lambda \in (0,1),\) we have  \(\varphi(t)=0\) for every \(t \in (\lambda,1].\) Indeed, if \(\varphi(t_0) \neq 0\) for some \(t_0 \in (\lambda,1]\) then we have \(Xt^n=  \varphi(t)(t/\lambda)^n, \) so that \(\|Xt^n\|_\infty \geq |\varphi (t_0)|(t_0/\lambda)^n,\)  and this is a contradiction, because \(X\) is a bounded operator. Finally, it follows from the Weierstrass approximation theorem that   \(Xf\) is given by the expression \((\ast)\) for every  \(f \in C[0,1]\) since this relationship is fulfilled   whenever \(f\) is a polynomial. 
\end{proof}

\begin{theorem} The family \({\mathcal X}\) of all extended eigenoperators of \(M_t\) associated  with an extended eigenvalue \(\lambda \in (0,\infty)\) is a localizing subspace of \({\mathcal B}(C[0,1])\) and it  does not contain any nonzero compact operators.
\label{example}
\end{theorem}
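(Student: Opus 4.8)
The plan is to use the explicit description of \({\mathcal X}\) obtained in Lemma \ref{interval2}: every \(X\in{\mathcal X}\) is a weighted composition operator \((Xf)(t)=\varphi(t)f(t/\lambda)\) on \([0,1]\cap[0,\lambda]\) and \((Xf)(t)=0\) on \([0,1]\cap(\lambda,\infty)\), where \(\varphi\in C[0,1]\) vanishes on \([0,1]\cap(\lambda,\infty)\), and moreover \(\|X\|\le\|\varphi\|_\infty\). I would take as localizing ball \(B=\{f\in C[0,1]\colon \|f-1\|_\infty\le 1/2\}\); then \(0\notin B\) and, crucially, every \(f\in B\) satisfies \(|f(t)|\ge 1/2\) for all \(t\in[0,1]\), so \(f\) is invertible in \(C[0,1]\) with \(\|1/f\|_\infty\le 2\). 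This nonvanishing is the feature that drives the whole argument: rather than trying to regularize an arbitrary sequence, I will force each term to a prescribed target by an inversion trick.

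For the localizing property, fix once and for all the nonzero target \(g\in C[0,1]\) defined by \(g(t)=\tfrac12\max\{0,\,1-t/\lambda\}\). It satisfies \(\|g\|_\infty\le 1/2\) and, when \(\lambda<1\), it is supported in \([0,\lambda]\) with \(g(\lambda)=0\), so it meets exactly the support requirement imposed on the weights in Lemma \ref{interval2}. Given any sequence \((f_n)\) in \(B\), I define weights \(\varphi_n(t)=g(t)/f_n(t/\lambda)\) on \([0,1]\cap[0,\lambda]\), extended by \(0\) on \([0,1]\cap(\lambda,\infty)\). Each \(\varphi_n\) is continuous—the value \(g(\lambda)=0\) makes the two pieces match when \(\lambda<1\)—and \(|\varphi_n(t)|\le\|g\|_\infty\,\|1/f_n\|_\infty\le (1/2)\cdot 2=1\). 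Hence the associated operator \(X_n\in{\mathcal X}\) has \(\|X_n\|\le 1\), and by construction \(X_nf_n=g\) identically. Thus \((X_nf_n)\) is the constant sequence \(g\), which converges in norm to the nonzero vector \(g\); no passage to a subsequence is even needed. This establishes that \({\mathcal X}\) is localizing.

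For the absence of nonzero compact operators, let \(X\in{\mathcal X}\) be nonzero with weight \(\varphi\). Since \(\varphi\) is continuous and not identically zero, the open set \(\{\varphi\neq 0\}\) contains a point \(t_0\) with \(t_0>0\); set \(s_0=t_0/\lambda\in(0,1]\). I would then choose disjoint closed subintervals \(I_k\subseteq(0,1]\) shrinking to \(s_0\) together with bump functions \(u_k\in C[0,1]\) with \(\|u_k\|_\infty=1\), supported in \(I_k\), and equal to \(1\) at the centre \(\sigma_k\to s_0\). Because \(t\mapsto t/\lambda\) is injective, the functions \(Xu_k\) are supported in the disjoint intervals \(\lambda I_k\) accumulating at \(t_0\), and \((Xu_k)(\lambda\sigma_k)=\varphi(\lambda\sigma_k)\to\varphi(t_0)\neq 0\). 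Disjointness of the supports gives \(\|Xu_k-Xu_l\|_\infty=\max\{\|Xu_k\|_\infty,\|Xu_l\|_\infty\}\ge|\varphi(\lambda\sigma_k)|\) for large \(k,l\), which stays bounded away from \(0\). Hence \((Xu_k)\) is a bounded sequence with no norm-convergent subsequence, so \(X\) is not compact.

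The routine verifications are the continuity and the norm bound for \(\varphi_n\) and the bookkeeping for the bumps; the only genuinely delicate point is making the single target \(g\) simultaneously nonzero and compatible with the support constraint of Lemma \ref{interval2} for every \(\lambda\in(0,\infty)\), which is what forces the particular shape \(g(t)=\tfrac12\max\{0,1-t/\lambda\}\), and—for the non-compactness—insisting that \(t_0>0\) so that the images \(Xu_k\) remain disjointly supported with non-vanishing peaks.
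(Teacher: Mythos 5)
Your proof is correct. The localizing half is essentially the paper's own argument: the same ball \(B=\{f\in C[0,1]\colon\|f-1\|_\infty\le 1/2\}\) and the same inversion trick, except that you use the single target \(g(t)=\tfrac12\max\{0,1-t/\lambda\}\) where the paper splits into two cases, taking the constant target \(1/2\) when \(\lambda\ge 1\) and the target \(\max\{0,(\lambda-t)/2\}\) when \(\lambda\in(0,1)\); your unified formula is a mild streamlining, and exactly as in the paper the sequence \((X_nf_n)\) is literally constant in \(n\), so no subsequence extraction is needed. The non-compactness half, however, takes a genuinely different route. The paper fixes an interval \(\lambda I\) on which \(|\varphi|\ge\delta>0\), shows that \(X\) maps the infinite-dimensional subspace \(E\) of functions vanishing off \(I\) onto the infinite-dimensional subspace \(F\) of functions vanishing off \(\lambda I\) (via the explicit preimage \(f(t)=g(\lambda t)/\varphi(\lambda t)\)), and then relies on the standard fact that a compact operator cannot map a Banach space onto an infinite-dimensional Banach space (open mapping theorem plus Riesz's lemma). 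You instead build disjointly supported unit bumps \(u_k\) whose images \(Xu_k\) are disjointly supported with peak values tending to \(\varphi(t_0)\neq 0\), so that \(\|Xu_k-Xu_l\|_\infty\) is bounded below for large \(k\neq l\); this refutes compactness directly and is more elementary and self-contained, while the paper's version is shorter once the surjectivity fact is granted and exhibits the structural reason \(X\) fails to be compact. One detail you should nail down: choose \(t_0\) with \(\varphi(t_0)\neq 0\) and \(0<t_0<\min\{\lambda,1\}\) (possible because \(\{\varphi\neq 0\}\) is relatively open in \([0,1]\) and, when \(\lambda<1\), contained in \([0,\lambda)\)); then for all large \(k\) the intervals \(\lambda I_k\) lie inside \([0,1]\cap[0,\lambda]\), so the evaluations \((Xu_k)(\lambda\sigma_k)=\varphi(\lambda\sigma_k)\) are legitimate and the disjointness of the supports of the \(Xu_k\) is guaranteed.
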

\begin{proof} First, we show that \({\mathcal X}\) is localizing. Consider the closed ball \(B=\{f \in C[0,1] \colon \|f-1\|_\infty \leq 1/2\}.\) Take a sequence \((f_n)\) in \(B.\) Notice that \(1/2 \leq |f_n(t)| \leq 3/2\) for every \(t \in [0,1].\)  Suppose that \(\lambda \in [1, \infty)\) and let  \((\varphi_n)\)  be  the sequence of functions defined by the expression
\[
\varphi_n(t)=\frac{1}{2f_n(t/\lambda)}.
\]
Then \(\varphi_n \in C[0,1]\) and \(\|\varphi_n\|_\infty \leq 1.\) Consider the sequence  \((X_n)\) in \({\mathcal X}\)  defined by \((X_nf)(t)= \varphi_n(t)f(t/\lambda).\) Then \(\|X_n\| \leq 1\) and \((X_nf_n)(t) = 1/2\) for all \(t \in [0,1].\) Now, suppose that \(\lambda \in (0,1)\) and let  \((\varphi_n)\)  denote  the sequence of functions defined by the expression
\[
\varphi_n(t)= \left \{ \begin{array}{rl} \displaystyle{\frac{\lambda-t}{2  f_n(t/\lambda)},} & \text{if  } 0 \leq t < \lambda,\\
								0, & \text{if }\lambda \leq t  \leq 1.
		\end{array} \right. 
\]
Then \(\varphi_n \in C[0,1]\) and \(\|\varphi_n\|_\infty \leq 1.\) Consider the sequence  \((X_n)\) in \({\mathcal X}\)  defined by the expression 
\[
(X_nf)(t)= \left \{ \begin{array}{rl} \varphi_n(t)f(t/\lambda), & \text{if  }  0 \leq t \leq \lambda ,\\
								0, & \text{if } \lambda < t \leq 1,
		\end{array} \right.
\]
so that \(\|X_n\| \leq 1\) and \( (X_nf_n)(t) = \max \{0, (\lambda -t)/2\}\)  for all \(t \in [0,1]\) and all \(n \geq 1.\) In both cases we   conclude that  the family  \({\mathcal X}\) is a localizing subspace  of \({\mathcal B}(C[0,1]).\)

Next, we show that \({\mathcal X}\)  does not contain any nonzero compact operators. Take an operator \(X \in {\mathcal X}\backslash \{0\}\) and let \(\varphi \in C[0,1]\) be a nonzero function such that \(X\) is given by the expression \((\ast).\) Since \(\varphi\) is continuous and it does not vanish identically, and since \(\varphi(t)=0\) for all \(t > \lambda,\)  there is some \(\delta >0\) and there is an open interval \(I \subseteq [0,1]\) with \(\lambda I \subseteq [0,1]\) and such that \(|\varphi(t)| \geq \delta \) for all \(t \in \lambda I.\) Consider the infinite dimensional, closed subspaces 
\begin{eqnarray*}
E & = & \{ f \in C[0,1] \colon f(t)=0 \text{ for all } t \in [0,1] \backslash I\}, \\
F & = & \{ f \in C[0,1] \colon f(t)=0 \text{ for all } t \in [0,1] \backslash \lambda I\}.
\end{eqnarray*} 
Notice that  \(XE \subseteq F.\) We claim that the restriction \(X_{|E} \colon E \to F\) is onto, so that \(X\) cannot be compact. Indeed, let \(g \in F\) and consider the function defined by
\[
f(t)=  \left \{ \begin{array}{rl}  g(\lambda t)/\varphi (\lambda t), & \text{if } t \in I,\\
					 0, &  \text{if } t \in [0,1] \backslash I.
	\end{array} \right.
\]
It is easy to see that \(f \in E\) and \(g=Xf,\) as we wanted.
\end{proof}
\noindent
{\bf Acknowledgement.} This research was partially supported by  Ministerio de Educaci\'on, Cultura y Deporte  under Grant MTM 2009-08934. % and by Junta de Andaluc{\'\i}a under grants FQM-3737 and P09-FQM-4745.}

\end{document}